\tikzset{mynode/.style={draw,circle,fill=black,inner sep=1pt,outer sep=0pt}
}
\newcommand{\rotvert}{\rotatebox[origin=c]{90}{$\vert$}}
\theoremstyle{definition}
\newtheorem{definition}{Definition}[section]
\newtheorem{example}[definition]{Example}
\theoremstyle{plain}
\newtheorem{theorem}[definition]{Theorem}
\newtheorem{proposition}[definition]{Proposition}
\newtheorem{lemma}[definition]{Lemma}
\newtheorem{corollary}[definition]{Corollary}
\theoremstyle{remark}
\newtheorem{remark}[definition]{Remark}
\newtheorem{problem}{\bf{Problem}}
\title{{\bf A Note on Idempotent Matrices: The Poset Structure and The Construction}}
\author{Sen-Peng Eu, Yong-Siang Lin, and Wei-Liang Sun}
\providecommand{\keywords}[1]
{
  %\small	
  \textit{Keywords:} #1
}
\providecommand{\MSC}[1]
{
  %\small	
  \textit{MSC:} #1
}
\newcommand{\Addresses}{{% additional braces for segregating \footnotesize
  \bigskip

  Sen-Peng Eu, \textsc{Department of Mathematics, National Taiwan Normal University, Taipei 11677, Taiwan, ROC}\par\nopagebreak
  \textit{E-mail address}: \texttt{speu@math.ntnu.edu.tw}

  \medskip

  Yong-Siang Lin, \textsc{Institute of Mathematics, Academia Sinica, Taipei 106319, Taiwan, ROC}\par\nopagebreak
  \textit{E-mail address}: \texttt{hsiang0000@gmail.com}

  \medskip

  Wei-Liang Sun, \textsc{Department of Mathematics, National Kaohsiung Normal University, Kaohsiung 824004, Taiwan, ROC}\par\nopagebreak
  \textit{E-mail address}: \texttt{wlsun@mail.nknu.edu.tw}

}}
\date{}
\begin{document}

\maketitle

\begin{abstract}
    Idempotent elements play a fundamental role in ring theory, as they encode significant information about the underlying algebraic structure. In this paper, we study idempotent matrices from two perspectives. First, we analyze the partially ordered set of idempotents in matrix rings over a division ring. We characterize the partial order relation explicitly in terms of block decompositions of idempotent matrices. Second, over principal ideal domains, we establish an equivalent condition for a matrix to be idempotent, derived from matrix factorizations using the Smith normal form. We also consider extensions over unique factorization domains and constructions via the Kronecker product and the anti-transpose. Together, these results clarify both the structural and constructive aspects of idempotents in matrix rings.
    Moreover, the set of idempotent matrices over a field can be viewed as an affine algebraic variety.
\end{abstract}

{\keywords{idempotent matrices, poset, Smith normal form, dimension of an affine algebraic variety.}}

{\MSC{16U40, 15B33.}}

\section{Introduction}

An element $e$ in a ring $R$ is called an idempotent if $e^2=e$. Such elements are crucial in ring theory, as they frequently serve to decompose rings into simpler components. For example, the Wedderburn-Artin theorem asserts that any semisimple Artinian ring with unity decomposes as a finite direct sum of matrix rings over division rings. Classical proofs of this theorem, as \cite[Theorem~IX.3.3]{algebra}, \cite[Theorem~14.15]{mialgebra}, and \cite[Theorem~2.6.18]{groupring}, rely heavily on the Jacobson density theorem.
For some alternative proofs, see \cite{AShortProofofWedderburnArtinTheoremTK} and \cite{ashortproofofwedderburnartintheorem}.

Nicholson \cite{ashortproofofwedderburnartintheorem}, however, showed that the same conclusion can be reached through a shorter and more elegant argument based on structural properties of idempotents. In particular, he introduced a natural partial order on the set of idempotents of a ring. Motivated by this idea, we investigate the poset structure of idempotent matrices over division rings. Our first main result, {Theorem~\ref{thm: all the idempotent greater than E}}, provides a precise characterization of the partial order relation between two idempotents in terms of block matrix decompositions.

The second theme of this paper concerns constructive methods for idempotent matrices over integral domains. When the base ring is a principal ideal domain, we develop a criterion that makes use of the Smith normal form. This leads to our second main result, {Theorem~\ref{thm: idempotent in Mn(PID) in alternative form}}, which gives an equivalent condition for a matrix to be idempotent. We also provide explicit examples over various domains. In addition, we examine further constructions involving the Kronecker product and the anti-transpose.

The paper is organized as follows. {Section~\ref{sec: The Poset Structure of Idempotents}} develops the poset structure of idempotent matrices over division rings, culminating in {Theorem~\ref{thm: all the idempotent greater than E}}. {Section~\ref{sec: Constructing Idempotent Matrices}} is devoted to constructive aspects: {Subsection~\ref{subsec: Idempotents in matrix ring over a UFD}} addresses unique factorization domains, {Subsection~\ref{subsec: Idempotents in matrix ring over a PID}} focuses on principal ideal domains with {Theorem~\ref{thm: idempotent in Mn(PID) in alternative form}} as the main result, while {Subsection~\ref{subsec: Additional Constructions}} presents additional constructions via the Kronecker product and the anti-transpose. {Section~\ref{sec: As an Affine Algebraic Variety}} regards the set of idempotent matrices over a field $K$ as an affine algebraic variety and discusses some of its basic properties.  Finally, we conclude with remarks and possible directions for future research.

\subsection{Notation}
Throughout the paper, $\dot+$ denotes the internal direct sum of submodules, while $\oplus$ denotes the external direct sum. Bold lowercase letters (e.g., ${\bf v},{\bf v}_i$) are used to denote column vectors. In contrast, when a bold symbol carries a subscript on the left (e.g., $_i{\bf v}$), it denotes a row vector.

\section{The Poset Structure of Idempotents}
\label{sec: The Poset Structure of Idempotents}

We first introduce a partial order relation on the set of idempotents in a ring. Let $R$ be a ring and ${\mathscr I}(R)$\index{${\mathscr I}(R)$: the set of idempotents in $R$} be the set of idempotents. That is, 
\[
{\mathscr I}(R)=\{e\in R\mid e^2=e\}.
\]
Because $0_R$ is always an idempotent, the set ${\mathscr I}(R)$ is nonempty.
Define a symbol ``$\le$''\index{$\le$} on the set ${\mathscr I}(R)$ by 
\[
e\le f\ {\textrm{if and only if}}\ ef=e=fe
\]
for $e,f\in {\mathscr I}(R)$.

\begin{proposition}
    \label{prop: idempotent in a ring form a poset}
    Let $R$ be a ring.
    The symbol ``$\le$'' is an partial order relation on ${\mathscr I}(R)$.
    Thus, ${\mathcal P}=({\mathscr I}(R),\le)$ is a poset.
\end{proposition}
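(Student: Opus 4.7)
The plan is to verify the three defining properties of a partial order---reflexivity, antisymmetry, and transitivity---directly from the definition, using only the idempotent condition $e^2 = e$ and associativity of the ring multiplication.

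For reflexivity, I would observe that for any $e \in \mathscr{I}(R)$ the relation $e^2 = e$ gives $e\cdot e = e = e\cdot e$, so $e \le e$. For antisymmetry, assume $e \le f$ and $f \le e$. By definition $ef = e$ and also $ef = f$ (from the second inequality applied on the appropriate side), so $e = f$; essentially, the two relations force $e$ and $f$ to be equal by reading off the common value of $ef$ (or $fe$).

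The only mildly substantive step is transitivity. Suppose $e \le f$ and $f \le g$, so that $ef = e = fe$ and $fg = f = gf$. I would compute
\[
eg = (ef)g = e(fg) = ef = e,
\]
and symmetrically $ge = g(fe) = (gf)e = fe = e$, establishing $e \le g$. Both computations are a single application of associativity together with a substitution from each hypothesis.

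I do not expect a genuine obstacle here: the argument is a routine verification and requires nothing beyond the ring axioms. The only point meriting attention is to use both halves of each inequality (i.e., both $ef = e$ and $fe = e$), since a one-sided version of the relation would fail antisymmetry in a noncommutative ring; the symmetry in the definition is exactly what makes the proof go through cleanly on both sides.
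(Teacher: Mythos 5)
Your verification is correct and complete: reflexivity from $e^2=e$, antisymmetry by reading $ef$ two ways, and transitivity by one substitution plus associativity on each side. The paper states this proposition without proof (treating it as routine), and your argument is exactly the standard verification one would supply, including the correct observation that both halves $ef=e$ and $fe=e$ of the relation are needed in a noncommutative ring.
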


This partial order is called the natural partial order on idempotents since $e\le f$ if and only if $eRe\subseteq fRf$ as rings.

\subsection{Preliminary: Linear Algebra over Division Rings}
\label{subsec: Preliminary: Linear Algebra over Division Rings}

Suppose $E$ is an idempotent matrix in $M_n(K)$ where $K$ is a field.
From elementary linear algebra, we know that $E$ is diagonalizable with eigenvalue in $\{0,1\}$. To see this, we note that the minimal polynomial $m_E(x)$ is a divisor of $x^2-x=x(x-1)$.
Thus, we expect that idempotent matrices over a division ring behave as field case.
Let $\Delta$ be a division ring. Although we are unable to define the characteristic polynomial and the minimal polynomial for a matrix in $M_n(\Delta)$, the rank of a matrix is still well-defined as the usual sense that equals to the dimension of the column space. 
For the detail, see \cite[Section~VII.2]{algebra}.
More precisely, for a given $n\in {\mathbb N}$, we have
\begin{enumerate}
    \item $\operatorname{rank}:M_n(\Delta)\to \{0,1,...,n\}$ is a well-defined function.
    \item Let $A,B\in M_n(\Delta)$. Then $A$ is similar to $B$ if and only if $\operatorname{rank}(A)=\operatorname{rank}(B)$.
    \item $\operatorname{rank}(A)=n$ if and only if $A$ is invertible. Therefore, the columns of $A$ form a basis for $\Delta^n$ if and only if $A$ is invertible.
\end{enumerate}

\begin{proposition}
    \label{diagonalization of idempotent matrix}
    Let $E\in {\mathscr I}(M_n(\Delta))$. Then there exists an invertible matrix $A$ and a diagonal matrix 
    \[
    D_E=\left(\begin{array}{c|c}
        I_{\operatorname{rank}(E)} & O \\
        \hline
        O & O
    \end{array}\right)
    \]
    such that $E=AD_EA^{-1}$.
\end{proposition}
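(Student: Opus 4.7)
My plan is to mimic the familiar field-theoretic argument: decompose $\Delta^n$ as an internal direct sum of the image and kernel of $E$, pick compatible bases, and read off the similarity. Concretely, I would first show that $\Delta^n = \operatorname{im}(E) \dot+ \ker(E)$. The existence of the decomposition follows from the identity $v = Ev + (v - Ev)$ for any $v \in \Delta^n$, where $Ev \in \operatorname{im}(E)$ and $E(v-Ev) = Ev - E^2 v = 0$ gives $v - Ev \in \ker(E)$. Uniqueness follows because any $v \in \operatorname{im}(E) \cap \ker(E)$ can be written as $v = Ew$, whence $v = Ew = E^2 w = E(Ew) = Ev = 0$.

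Next I would choose a basis $v_1, \dots, v_r$ for $\operatorname{im}(E)$, where $r = \operatorname{rank}(E)$, and a basis $v_{r+1}, \dots, v_n$ for $\ker(E)$; by the direct sum decomposition, concatenating them yields a basis of $\Delta^n$. The key observation is that $E$ fixes every element of its image: if $v_i = E w_i$ with $i \le r$, then $E v_i = E^2 w_i = E w_i = v_i$; and $E v_i = 0$ for $i > r$ by construction.

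Finally, let $A \in M_n(\Delta)$ be the matrix whose columns are $v_1, \dots, v_n$. By item (3) in the preliminaries recalled above, $A$ is invertible because its columns form a basis of $\Delta^n$. The matrix identities $E v_i = v_i$ for $i \le r$ and $E v_i = 0$ for $i > r$ translate precisely into the column equality $EA = A D_E$, so $E = A D_E A^{-1}$, as desired.

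\textbf{Main obstacle.} The only subtlety is making sure that one-sided linear algebra over a division ring supports the tools used — rank-nullity, extension of linearly independent sets to bases, and the identification of $\operatorname{rank}(E)$ with $\dim_\Delta \operatorname{im}(E)$. These are standard once one fixes the convention (viewing $\Delta^n$ as a right $\Delta$-module when matrices act on the left, say) and are covered by the reference the authors cite, so the argument goes through without genuine difficulty.
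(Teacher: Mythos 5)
Your proposal is correct and follows essentially the same route as the paper: decompose $\Delta^n=\operatorname{Im}(E)\dot+\operatorname{Ker}(E)$, concatenate bases of the two summands to build the invertible matrix $A$, and read off $EA=AD_EA^{-1}A$. The only difference is that you make explicit the intermediate observation $Ev_i=v_i$ for $v_i\in\operatorname{Im}(E)$, which the paper leaves implicit in the step $EA=AD_E$.
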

\begin{proof}
    We first show that $\Delta^n=\operatorname{Im}(E)\dot+\operatorname{Ker}(E)$ where $\operatorname{Im}(E)=\{E{\bf v}\mid {\bf v}\in \Delta^n\}$ and $\operatorname{Ker}(E)=\{{\bf v}\mid E{\bf v}={\bf 0}\}$.
    Let ${\bf v}\in \Delta^n$. Then $E{\bf v}\in \operatorname{Im}(E)$, ${\bf v}-E{\bf v}\in \operatorname{Ker}(E)$, and ${\bf v}=E{\bf v}+({\bf v}-E{\bf v})\in \operatorname{Im}(E)+\operatorname{Ker}(E)$.
    Let ${\bf u}\in \operatorname{Im}(E)\cap \operatorname{Ker}(E)$. Then ${\bf u}=E{\bf u}'$ for some ${\bf u}'\in {\Delta^n}$ and $E{\bf u}={\bf 0}$.
    Thus, ${\bf u}=E{\bf u}'=E^2{\bf u}'=E(E{\bf u}')=E{\bf u}={\bf 0}$ and so $\operatorname{Im}(E)\cap \operatorname{Ker}(E)=\{\bf 0\}$.
    We can conclude that $\Delta^n=\operatorname{Im}(E)\dot+\operatorname{Ker}(E)$.

    Let $\ell=\operatorname{rank}(E)$. We can find an ordered basis $({\bf v}_1,...,{\bf v_{\ell}})$ for $\operatorname{Im}(E)$ and an ordered basis $({\bf v}_{\ell+1},...,{\bf v}_n)$ for $\operatorname{Ker}(E)$.
    Then $({\bf v}_1,...,{\bf v}_n)$ is an ordered basis for $\Delta^n$ and so the matrix $A:=\left(\begin{array}{cccc}
         {\bf v}_1&{\bf v}_2&\cdots&{\bf v}_n   
    \end{array}\right)$ is an invertible matrix.
    Thus, we have $EA=AD_E$. Therefore, $E=AD_EA^{-1}$.
\end{proof}

Every idempotent in $M_n(\Delta)$ can be represented as follows.

\begin{corollary}
    \label{cor: all the idempotent of rank t}
    Let $0\le \ell\le n$. Then all the idempotent of rank $\ell$ can be constructed by
    \[
    A
    \left(
    \begin{array}{c|c}
     I_\ell&O  \\
     \hline
     O&O 
    \end{array}
    \right)A^{-1}
    \]
    where $A$ runs over all elements in $M_n(\Delta)^{\times}$.
\end{corollary}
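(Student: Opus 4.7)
The plan is to verify the two set-containments separately, treating the displayed family $\{A\,D_\ell\,A^{-1} : A\in M_n(\Delta)^\times\}$ (where $D_\ell$ denotes the block-diagonal matrix with $I_\ell$ in the upper-left corner) and the set $\{E\in{\mathscr I}(M_n(\Delta)) : \operatorname{rank}(E)=\ell\}$.

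First I would handle the inclusion ``every idempotent of rank $\ell$ lies in the family.'' This is essentially a direct appeal to Proposition~\ref{diagonalization of idempotent matrix}: given such an $E$, that proposition furnishes an invertible $A$ with $E=AD_EA^{-1}$, and since $\operatorname{rank}(E)=\ell$ the matrix $D_E$ is exactly $D_\ell$. So no additional work is needed here beyond citing the previous result.

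For the reverse inclusion I would fix any invertible $A$ and set $E:=A D_\ell A^{-1}$. Idempotency is the one-line computation $E^2 = AD_\ell A^{-1}AD_\ell A^{-1}=AD_\ell^2 A^{-1}=AD_\ell A^{-1}=E$, using that $D_\ell$ is visibly idempotent. For the rank, I would invoke property~(2) of the rank function recalled just before Proposition~\ref{diagonalization of idempotent matrix}: similar matrices over $\Delta$ have the same rank, so $\operatorname{rank}(E)=\operatorname{rank}(D_\ell)=\ell$ (the latter being clear from the block form, since the columns of $D_\ell$ span an $\ell$-dimensional subspace of $\Delta^n$).

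There is no real obstacle here; the corollary is a reformulation of the diagonalization proposition together with similarity-invariance of rank. The only thing to be mindful of is to rely solely on the features of linear algebra over division rings that were explicitly recorded earlier (well-definedness of rank and its invariance under similarity), since characteristic and minimal polynomials are not available in the noncommutative setting.
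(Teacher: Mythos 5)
Your argument is correct and matches the paper's intent: the paper states this corollary without proof as an immediate consequence of Proposition~\ref{diagonalization of idempotent matrix}, and your two inclusions (the forward one by citing that proposition, the reverse one by the one-line idempotency computation plus similarity-invariance of rank) are exactly the routine verification being left implicit. Nothing is missing.
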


\subsection{The Main Result}

Our main result is the following.

\begin{theorem}
    \label{thm: all the idempotent greater than E}
    Let $E\in {\mathscr I}(M_n(\Delta))$ and let $A$ be an invertible matrix as in {Proposition~\ref{diagonalization of idempotent matrix}} that
    \[
    E=A\left(\begin{array}{c|c}
    I_{\operatorname{rank}(E)} &O  \\
    \hline
    O &O 
    \end{array}\right)A^{-1}
    \]
    for some invertible matrix $A$.
    Then for $F\in \mathscr{I}(M_n(\Delta))$, we have $E\le F$ if and only if 
    \[
    F=A\left(\begin{array}{c|c}
    I_{\operatorname{rank}(E)} &O  \\
    \hline
    O &T 
    \end{array}\right)A^{-1}
    \]
    where $T\in {\mathscr I}(M_{n-\operatorname{rank}(E)}(\Delta))$.
    In particular, we have $\operatorname{rank}(E)\le \operatorname{rank}(F)$.
\end{theorem}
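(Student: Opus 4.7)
The plan is to reduce to the case where $E$ itself is already in the diagonal normal form by conjugating away $A$. Observe that for any invertible $U$ and any $X,Y\in M_n(\Delta)$, we have $XY=X$ iff $(U^{-1}XU)(U^{-1}YU)=U^{-1}XU$, so the relation $\le$ is preserved under simultaneous conjugation; similarly, being an idempotent is preserved. Setting $E'=A^{-1}EA=D_E$ and $F'=A^{-1}FA$, the statement to prove therefore reduces to: for $F'\in {\mathscr I}(M_n(\Delta))$, we have $D_E\le F'$ if and only if
\[
F'=\left(\begin{array}{c|c} I_r & O \\ \hline O & T \end{array}\right)
\]
with $T\in {\mathscr I}(M_{n-r}(\Delta))$, where $r=\operatorname{rank}(E)$.

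For the forward direction, I would partition $F'$ conformally as
\[
F'=\left(\begin{array}{c|c} P & Q \\ \hline R & S \end{array}\right),
\]
with $P$ of size $r\times r$ and $S$ of size $(n-r)\times(n-r)$. Direct block multiplication gives
\[
D_E F'=\left(\begin{array}{c|c} P & Q \\ \hline O & O \end{array}\right), \qquad F' D_E=\left(\begin{array}{c|c} P & O \\ \hline R & O \end{array}\right).
\]
The two equations $D_E F'=D_E$ and $F'D_E=D_E$ then force $P=I_r$, $Q=O$, and $R=O$. Writing $T:=S$, the remaining idempotency condition $F'^2=F'$ becomes $T^2=T$, which is exactly what is needed.

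For the reverse direction, a one-line block computation shows that any $F'$ of the prescribed form commutes with $D_E$ and satisfies $D_E F'=D_E=F' D_E$, and that $F'^2=F'$ whenever $T^2=T$; conjugating by $A$ returns $E\le F$. Finally, the rank inequality $\operatorname{rank}(E)\le \operatorname{rank}(F)$ follows because rank is a similarity invariant and $\operatorname{rank}\!\left(\begin{smallmatrix} I_r & O \\ O & T \end{smallmatrix}\right)=r+\operatorname{rank}(T)\ge r$. No substantive obstacle arises once the reduction to $E=D_E$ is made; the argument is essentially an organized block-matrix computation combined with the invariance of $\le$ under conjugation.
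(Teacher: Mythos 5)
Your proposal is correct and follows essentially the same route as the paper: both reduce to analyzing $A^{-1}FA$ and use the two conditions $D_E(A^{-1}FA)=D_E=(A^{-1}FA)D_E$ to kill the off-diagonal blocks and force the top-left block to be $I_{\operatorname{rank}(E)}$, after which idempotency of $F$ transfers to the lower-right block $T$. The only difference is cosmetic: the paper first diagonalizes $F$ as $BD_FB^{-1}$ before arriving at the same matrix $A^{-1}BD_FB^{-1}A=A^{-1}FA$, a detour your version cleanly avoids.
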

\begin{proof}
    ($\Rightarrow$) By {Proposition~\ref{diagonalization of idempotent matrix}}, there exists an invertible matrix $B$ such that $F=BD_FB^{-1}$ where
    \[
    D_F=\left(\begin{array}{c|c}
    I_{\operatorname{rank}(F)} &O  \\
    \hline
    O &O 
    \end{array}\right).
    \] 
    Because $EF=E=FE$, we have
    \[
    \left\{
    \begin{array}{l}
            AD_EA^{-1}BD_FB^{-1}=EF=E=AD_EA^{-1},  \\
            BD_FB^{-1}AD_EA^{-1}=FE=E=AD_EA^{-1}. 
    \end{array}
    \right.
    \]
    Then
    \[
    \left\{
    \begin{array}{ll}
            D_E(A^{-1}BD_FB^{-1}-A^{-1})&=O,  \\
            (BD_FB^{-1}A-A)D_E&=O. 
    \end{array}
    \right.
    \]
    Thus, by multiplying $A$ to the first equation and $A^{-1}$ to the second equation we have
    \[
    \left\{
    \begin{array}{ll}
            D_E(A^{-1}BD_FB^{-1}A-I_n)&=O,  \\
            (A^{-1}BD_FB^{-1}A-I_n)D_E&=O. 
    \end{array}
    \right.
    \]
    Because ${\bf e}_i^{\mathsf T}M$ is the $i$th row of the matrix $M$ and $M{\bf e}_i$ is the $i$th column of the matrix $M$, the equation $D_E(A^{-1}BD_FB^{-1}A-I_n)=O$ provides that the first $\operatorname{rank}(E)$ rows of $A^{-1}BD_FB^{-1}A-I_n$ are all zero, and the equation $(A^{-1}BD_FB^{-1}A-I_n)D_E=O$ tells us that the first $\operatorname{rank}(E)$ columns of $A^{-1}BD_FB^{-1}A-I_n$ are all zero.
    That is, we have
    \[
    A^{-1}BD_FB^{-1}A-I_n=
    \left(
    \begin{array}{c|c}
            O_{\operatorname{rank}(E)}&O  \\
            \hline
            O&* 
    \end{array}
    \right).
    \]
    Therefore,
    \[
    A^{-1}BD_FB^{-1}A=
    \left(
    \begin{array}{c|c}
            I_{\operatorname{rank}(E)}&O  \\
            \hline
            O&T 
    \end{array}
    \right),
    \]
    for some $T\in M_{n-\operatorname{rank}(E)}(\Delta)$.
    Since $D_F$ is an idempotent, we know that 
    \[
    (A^{-1}BD_FB^{-1}A)(A^{-1}BD_FB^{-1}A)=A^{-1}BD_FB^{-1}A
    \]
    and so $A^{-1}BD_FB^{-1}A$ is an idempotent. Then we have
    \[
    \left(
    \begin{array}{c|c}
            I_{\operatorname{rank}(E)}&O  \\
            \hline
            O&T 
    \end{array}
    \right)\left(
    \begin{array}{c|c}
            I_{\operatorname{rank}(E)}&O  \\
            \hline
            O&T 
    \end{array}
    \right)=\left(
    \begin{array}{c|c}
            I_{\operatorname{rank}(E)}&O  \\
            \hline
            O&T 
    \end{array}
    \right)
    \]
    and so
    \[
    \left(
    \begin{array}{c|c}
            I_{\operatorname{rank}(E)}&O  \\
            \hline
            O&T^2 
    \end{array}
    \right)
    =\left(
    \begin{array}{c|c}
            I_{\operatorname{rank}(E)}&O  \\
            \hline
            O&T 
    \end{array}
    \right).
    \]
    Therefore, $T$ is an idempotent.
    Finally,
    \[
    F=BD_FB^{-1}=A\left(
    \begin{array}{c|c}
            I_{\operatorname{rank}(E)}&O  \\
            \hline
            O&T 
    \end{array}
    \right)A^{-1},
    \]
    and we have $\operatorname{rank}(E)\le \operatorname{rank}(F)$.

    ($\Leftarrow$) By direct computation, we have
    \[
    A\left(\begin{array}{c|c}
    I_{\operatorname{rank}(E)} &O  \\
    \hline
    O &O 
    \end{array}\right)A^{-1}A\left(\begin{array}{c|c}
    I_{\operatorname{rank}(E)} &O  \\
    \hline
    O &T 
    \end{array}\right)A^{-1}=A\left(\begin{array}{c|c}
    I_{\operatorname{rank}(E)} &O  \\
    \hline
    O &O 
    \end{array}\right)A^{-1}
    \]
    and
    \[
    A\left(\begin{array}{c|c}
    I_{\operatorname{rank}(E)} &O  \\
    \hline
    O &T 
    \end{array}\right)A^{-1}A\left(\begin{array}{c|c}
    I_{\operatorname{rank}(E)} &O  \\
    \hline
    O &O 
    \end{array}\right)A^{-1}=A\left(\begin{array}{c|c}
    I_{\operatorname{rank}(E)} &O  \\
    \hline
    O &O 
    \end{array}\right)A^{-1}.
    \]
    Thus, $EF=E=FE$.
\end{proof}

\begin{remark}
    The subsequent content of this section aims at using {Theorem~\ref{thm: all the idempotent greater than E}} to construct the poset $({\mathscr I}(M_n(\Delta)),\le)$.
    By the result of this section, we can discuss basic combinatorial property on $({\mathscr I}(M_n({\mathbb F}_q)),\le)$.
    Nonetheless, we find very recently that \cite[Section~2]{CountingMatrixOverFiniteFields} construct the poset $({\mathscr I}(M_n({\mathbb F}_q)),\le)$ in a totally different approach.
\end{remark}

\begin{corollary}
     \label{cor: the construction of all the idempotent greater than E}
     Let $E\in {\mathscr I}(M_n(\Delta))$ and let $A$ be an invertible matrix as in {Proposition~\ref{diagonalization of idempotent matrix}} that
    \[
    E=A\left(\begin{array}{c|c}
    I_{\operatorname{rank}(E)} &O  \\
    \hline
    O &O 
    \end{array}\right)A^{-1}
    \]
    for some invertible matrix $A$. Then
    \[
    A\left(
    \begin{array}{c|c}
    I_{\operatorname{rank}(E)} &O  \\
    \hline
    O&
    B\left(
    \begin{array}{cc}
         I_{t} &O  \\
          O&O 
    \end{array}
    \right)B^{-1}
    \end{array}\right)A^{-1}
    \]
    for $0\le t\le n-\operatorname{rank}(E)$ and $B\in M_{n-\operatorname{rank}(E)}(\Delta)^{\times}$
    address all the elements in
    \[
    \{F\in {\mathscr I}(M_n(\Delta))\mid E\le F\}.
    \]
\end{corollary}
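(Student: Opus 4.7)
The plan is to deduce this corollary by a two-step application of results already in the section. By Theorem~\ref{thm: all the idempotent greater than E}, the set $\{F\in{\mathscr I}(M_n(\Delta))\mid E\le F\}$ is in bijection with ${\mathscr I}(M_{n-\operatorname{rank}(E)}(\Delta))$ via
\[
T\longmapsto A\left(\begin{array}{c|c}
I_{\operatorname{rank}(E)} & O \\ \hline O & T
\end{array}\right)A^{-1}.
\]
So it suffices to parametrize all idempotents $T$ in the smaller matrix ring $M_{n-\operatorname{rank}(E)}(\Delta)$, and then substitute each such $T$ into the upper-right slot.

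For the parametrization of $T$, I would invoke Corollary~\ref{cor: all the idempotent of rank t} applied to $M_{n-\operatorname{rank}(E)}(\Delta)$: as $t$ runs over $\{0,1,\ldots,n-\operatorname{rank}(E)\}$ and $B$ runs over $M_{n-\operatorname{rank}(E)}(\Delta)^{\times}$, the matrices
\[
B\left(\begin{array}{cc} I_t & O \\ O & O \end{array}\right)B^{-1}
\]
exhaust all idempotents of $M_{n-\operatorname{rank}(E)}(\Delta)$ (with $t$ being the rank of the resulting idempotent). Plugging this into the block form from Theorem~\ref{thm: all the idempotent greater than E} gives exactly the expression in the statement, and covers every $F$ with $E\le F$.

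The argument really is just a direct concatenation of Theorem~\ref{thm: all the idempotent greater than E} and Corollary~\ref{cor: all the idempotent of rank t}, so I do not anticipate any technical obstacle. The only point worth flagging is that this parametrization is not one-to-one: different pairs $(t,B)$ can yield the same idempotent $T$, hence the same $F$. Since the corollary only claims that the displayed expressions \emph{address} all such $F$ (i.e.\ surjectively enumerate them), no uniqueness argument is needed and the proof reduces to citing the two previous results.
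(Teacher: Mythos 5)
Your proposal is correct and matches the paper's own proof essentially verbatim: both apply Theorem~\ref{thm: all the idempotent greater than E} to reduce to an idempotent $T$ in $M_{n-\operatorname{rank}(E)}(\Delta)$ and then invoke Corollary~\ref{cor: all the idempotent of rank t} to write $T=B\left(\begin{smallmatrix} I_t & O \\ O & O \end{smallmatrix}\right)B^{-1}$. Your added remark that the parametrization need not be injective, and that surjectivity is all the statement requires, is a fair observation but not needed for the argument.
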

\begin{proof}
    Let $F\in {\mathscr I}(M_n(\Delta))$ with $E\le F$, then by {Theorem~\ref{thm: all the idempotent greater than E}} we know that
    \[
    F=A\left(\begin{array}{c|c}
    I_{\operatorname{rank}(E)} &O  \\
    \hline
    O &T 
    \end{array}\right)A^{-1}
    \]
    where $T\in {\mathscr I}(M_{n-\operatorname{rank}(E)}(\Delta))$.
    By {Corollary~\ref{cor: all the idempotent of rank t}}, we know that
    \[
    T=B\left(
    \begin{array}{cc}
         I_{t} &O  \\
          O&O 
    \end{array}
    \right)B^{-1}
    \]
    for some $B\in M_{n-\operatorname{rank}(E)}(\Delta)^{\times}$ and $t=\operatorname{rank}(T)\le n-\operatorname{rank}(E)$.
    The result follows.
\end{proof}

Let $E,F\in {\mathscr I}(M_n(\Delta))$ with $E\le F$.
If $\operatorname{rank}(F)=\operatorname{rank}(E)$, then by {Corollary~\ref{cor: the construction of all the idempotent greater than E}} $F=E$.
If $\operatorname{rank}(F)=\operatorname{rank}(E)+1$, then $F$ covers $E$.
Here comes a natural question. Does there exist $E,F\in {\mathscr I}(M_n(\Delta))$ such that $F$ covers $E$ with $\operatorname{rank}(F)>\operatorname{rank}(E)+1$?
The answer is negative. 
The only idempotents that cover $E$ are those with the rank increased by $1$.
We start from a lemma that reveals the existence.

\begin{lemma}
    \label{the existence of idempotent greater than E with rank greater by 1}
    Let $E$ in ${\mathscr I}(M_n(\Delta))$ with $\operatorname{rank}(E)<n$. There always exists a $F$ in ${\mathscr I}(M_n(\Delta))$ such that $\operatorname{rank}(F)=\operatorname{rank}(E)+1$. 
\end{lemma}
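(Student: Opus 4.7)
The plan is to produce $F$ explicitly from the diagonalization of $E$ given by Proposition~\ref{diagonalization of idempotent matrix}. Write $E = A D_E A^{-1}$ with $D_E$ the block diagonal $I_{\operatorname{rank}(E)} \oplus O$, and let $r = \operatorname{rank}(E)$. Since $r < n$, there is ``room'' in the lower-right block to insert an additional diagonal $1$. So the natural candidate is
\[
F := A\left(\begin{array}{c|c}
I_{r} & O \\ \hline
O & T
\end{array}\right) A^{-1},
\qquad
T := \left(\begin{array}{c|c}
1 & O \\ \hline
O & O_{n-r-1}
\end{array}\right) \in M_{n-r}(\Delta).
\]

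Next I would verify the three required properties. First, $T$ is visibly idempotent, and since conjugation by $A$ preserves idempotency of a block-diagonal matrix, $F \in \mathscr{I}(M_n(\Delta))$. Second, the matrix inside the conjugation is block diagonal with diagonal blocks $I_r$, $1$, and $O_{n-r-1}$, hence similar to a matrix of rank $r+1$; by property (2) of the rank function recalled in Subsection~\ref{subsec: Preliminary: Linear Algebra over Division Rings}, $\operatorname{rank}(F) = r+1$. Third, $E \le F$ is immediate from the ($\Leftarrow$) direction of Theorem~\ref{thm: all the idempotent greater than E} applied with this specific choice of $T$; alternatively, one can simply compute $EF$ and $FE$ directly from the block form, since
\[
\left(\begin{array}{c|c} I_r & O \\ \hline O & O \end{array}\right)
\left(\begin{array}{c|c} I_r & O \\ \hline O & T \end{array}\right)
= \left(\begin{array}{c|c} I_r & O \\ \hline O & O \end{array}\right)
= \left(\begin{array}{c|c} I_r & O \\ \hline O & T \end{array}\right)
\left(\begin{array}{c|c} I_r & O \\ \hline O & O \end{array}\right),
\]
and then conjugate by $A$ on both sides.

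There is no real obstacle; the only subtle point is ensuring that rank truly jumps by exactly one, which is guaranteed by the similarity invariance of rank and the transparent block structure. The hypothesis $r < n$ is used precisely to make sense of the block $T$ (so that $n - r \ge 1$ and the new $1$-entry fits).
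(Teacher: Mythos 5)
Your proposal is correct and is essentially identical to the paper's own proof: both take the diagonalization $E = AD_EA^{-1}$ from Proposition~\ref{diagonalization of idempotent matrix}, insert the rank-one idempotent $T$ with a single $1$ in its $(1,1)$ entry into the lower-right block, and invoke Theorem~\ref{thm: all the idempotent greater than E} to get $E \le F$. Your verification that the rank genuinely increases by exactly one (via similarity invariance of rank) is slightly more explicit than the paper's, which simply asserts it.
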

\begin{proof}
    Because $E\in {\mathscr I}(M_n(\Delta))$, by {Proposition~\ref{diagonalization of idempotent matrix}} we know that there exists an invertible matrix $A$ such that
    \[
    E=A\left(\begin{array}{c|c}
        I_{\operatorname{rank}(E)} & O \\
        \hline
        O & O
    \end{array}\right)A^{-1}.
    \]
    Let $T\in M_{n-\operatorname{rank}(E)}(\Delta)$ be the matrix defined by $T_{11}=1$ and $T_{ij}=0$ for all $(i,j)\neq (1,1)$. Then we know that $T^2=T$ is an idempotent and so $T\in {\mathscr I}(M_{n-\operatorname{rank}(E)}(\Delta))$.
    Let $F\in M_n(\Delta)$ be the matrix defined by
    \[
    F=A\left(\begin{array}{c|c}
        I_{\operatorname{rank}(E)} & O \\
        \hline
        O & T
    \end{array}\right)A^{-1}.
    \]
    Then $\operatorname{rank}(F)=\operatorname{rank}(E)+1$.
    Also, by {Theorem~\ref{thm: all the idempotent greater than E}}, we know that $E\le F$.
\end{proof}

\begin{proposition}
    \label{prop: idempotents that cover E}
    Let $E\le F$ in ${\mathscr I}(M_n(\Delta))$. Then $F$ covers $E$ if and only if $\operatorname{rank}(F)=\operatorname{rank}(E)+1$.
\end{proposition}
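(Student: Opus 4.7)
The plan is to derive both directions from Theorem~\ref{thm: all the idempotent greater than E}, which already supplies a block presentation $G = A\operatorname{diag}(I_{\operatorname{rank}(E)}, T)A^{-1}$ with $T$ idempotent whenever $E \le G$. A key auxiliary fact I will record up front is that in this presentation $\operatorname{rank}(G) = \operatorname{rank}(E) + \operatorname{rank}(T)$, so $\operatorname{rank}(G) = \operatorname{rank}(E)$ forces $T = O$ and hence $G = E$. Equivalently, any strict inequality $E < G$ strictly increases rank.

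For the $(\Leftarrow)$ direction, assume $\operatorname{rank}(F) = \operatorname{rank}(E) + 1$ and let $G$ satisfy $E \le G \le F$. Applying Theorem~\ref{thm: all the idempotent greater than E} to each inequality sandwiches $\operatorname{rank}(G)$ between $\operatorname{rank}(E)$ and $\operatorname{rank}(E) + 1$, and the auxiliary fact (applied to $E \le G$ in one case and to $G \le F$ in the other) collapses each possibility to $G = E$ or $G = F$. This gives the covering property.

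For the $(\Rightarrow)$ direction I argue by contraposition: assuming $\operatorname{rank}(F) - \operatorname{rank}(E) = s \ge 2$, I construct an intermediate idempotent $G$ with $E < G < F$. The idea is to start from the block presentation $F = A\operatorname{diag}(I_r, T)A^{-1}$ supplied by Theorem~\ref{thm: all the idempotent greater than E} (with $r = \operatorname{rank}(E)$), further diagonalize the bottom block via Proposition~\ref{diagonalization of idempotent matrix} as $T = B\operatorname{diag}(I_s, O)B^{-1}$, and absorb the inner change of basis into a single conjugating matrix $C = A \cdot \operatorname{diag}(I_r, B)$. Because $\operatorname{diag}(I_r, B)$ commutes with $\operatorname{diag}(I_r, O_{n-r})$, this change of basis preserves the presentation of $E$, so simultaneously
\[
E = C\operatorname{diag}(I_r, O_s, O_{n-r-s})C^{-1}, \qquad F = C\operatorname{diag}(I_r, I_s, O_{n-r-s})C^{-1}.
\]
Splitting the middle block as $I_s = \operatorname{diag}(I_1, I_{s-1})$ and defining $G = C\operatorname{diag}(I_r, I_1, O_{s-1}, O_{n-r-s})C^{-1}$ yields an idempotent of rank $r+1$; direct block multiplication then gives $EG = GE = E$ and $FG = GF = G$, so $E < G < F$.

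The main obstacle I anticipate is the simultaneous block reduction in the $(\Rightarrow)$ direction: one must verify that diagonalizing the bottom block does not perturb the presentation of $E$. This is really the only nontrivial bit, and it reduces to the commutation of $\operatorname{diag}(I_r, B)$ with $\operatorname{diag}(I_r, O_{n-r})$. Everything else is rank bookkeeping on top of Theorem~\ref{thm: all the idempotent greater than E}.
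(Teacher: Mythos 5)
Your proposal is correct and follows essentially the same route as the paper: the backward direction is the same rank-sandwiching argument via Theorem~\ref{thm: all the idempotent greater than E} and Corollary~\ref{cor: the construction of all the idempotent greater than E}, and your intermediate idempotent $G$ in the forward direction is exactly the matrix $A\bigl(\begin{smallmatrix} I_r & O \\ O & B\,\mathrm{diag}(1,O)B^{-1}\end{smallmatrix}\bigr)A^{-1}$ the paper constructs, merely repackaged through the single conjugator $C=A\cdot\mathrm{diag}(I_r,B)$ and phrased as a contrapositive rather than a contradiction. The only substantive addition is that you make explicit the rank bookkeeping ($\operatorname{rank}(G)=\operatorname{rank}(E)+\operatorname{rank}(T)$) that the paper leaves implicit, which is a harmless and correct refinement.
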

\begin{proof}
    ($\Leftarrow$) This direction is discussed in the paragraph before {Lemma~\ref{the existence of idempotent greater than E with rank greater by 1}}.

    ($\Rightarrow$) Suppose $F$ covers $E$ and $\operatorname{rank}(F)\gneq \operatorname{rank}(E)+1$.
    Because $E\le F$, by {Theorem~\ref{thm: all the idempotent greater than E}} we know that $E=AD_EA^{-1}$ and $F=A\left(\begin{array}{c|c}
        I_{\operatorname{rank}(E)} &O  \\
         \hline
        O &T 
    \end{array}\right)A^{-1}$ for some idempotent matrix $T\in M_{n-\operatorname{rank}(E)}(\Delta)$.
    Since $\operatorname{rank}(F)\gneq \operatorname{rank}(E)+1$, we have $\operatorname{rank}(T)\ge 2$.
    Write $T=BD_TB^{-1}$, where $D_T=\left(\begin{array}{c|c}
        I_{\operatorname{rank}(T)} &O  \\
         \hline
        O &O_{n-\operatorname{rank}(E)-\operatorname{rank}(T)} 
    \end{array}\right)$.
    Set
    \[
    G=A\left(
    \begin{array}{c|c}
        I_{\operatorname{rank}(E)} & O \\
        \hline
        O & B\left(\begin{array}{c|c}
            1 & O \\
            \hline
            O &O_{n-\operatorname{rank}(E)-1} 
        \end{array}\right)B^{-1}
    \end{array}
    \right)A^{-1}.
    \]
    We have
    \[
    F=A\left(
    \begin{array}{c|c}
        I_{\operatorname{rank}(E)} & O \\
        \hline
        O & B\left(\begin{array}{c|c}
            I_{\operatorname{rank}(T)} & O \\
            \hline
            O &O_{n-\operatorname{rank}(E)-\operatorname{rank}(T)} 
        \end{array}\right)B^{-1}
    \end{array}
    \right)A^{-1},
    \]
    \[
    G=A\left(
    \begin{array}{c|c}
        I_{\operatorname{rank}(E)} & O \\
        \hline
        O & B\left(\begin{array}{c|c}
            1 & O \\
            \hline
            O &O_{n-\operatorname{rank}(E)-1} 
        \end{array}\right)B^{-1}
    \end{array}
    \right)A^{-1},
    \]
    \[
    E=A\left(
    \begin{array}{c|c}
        I_{\operatorname{rank}(E)} & O \\
        \hline
        O & O_{n-\operatorname{rank}(E)}
    \end{array}
    \right)A^{-1}.
    \]
    Thus, $E\lneq G\lneq F$ and so $F$ doesn't cover $E$. Then there is a contradiction.
\end{proof}

By {Corollary~\ref{cor: all the idempotent of rank t}}, we know that the poset $({\mathscr I}(M_n(\Delta)),\le)$ can be layered depending on the rank of matrices.

\begin{corollary}
    \label{cor: the construction of idempotents that cover E}
    Let $0\le \ell\le n$. For $A\in M_n(\Delta)^{\times}$, the matrix
    \[
    A\left(
    \begin{array}{c|c}
         I_\ell &O  \\
         \hline
          O&O 
    \end{array}
    \right)A^{-1}
    \]
    is covered by
    \[
    A\left(
    \begin{array}{c|c}
        I_\ell & O \\
        \hline
        O & B\left(\begin{array}{c|c}
            1 & O \\
            \hline
            O &O_{n-\ell-1} 
        \end{array}\right)B^{-1}
    \end{array}
    \right)A^{-1}
    \]
    for $B$ runs over all invertible matrices of $M_{n-\ell}(\Delta)^{\times}$.
\end{corollary}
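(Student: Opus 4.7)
The plan is to combine the two immediately preceding results, namely {Corollary~\ref{cor: the construction of all the idempotent greater than E}} (which describes all idempotents $F$ with $E\le F$ up to conjugation by the fixed $A$) and {Proposition~\ref{prop: idempotents that cover E}} (which says that among those $F$, the ones covering $E$ are precisely the ones whose rank exceeds $\operatorname{rank}(E)$ by one). This reduces the statement to extracting the ``rank $\ell+1$'' slice from the parameterization already in hand.

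First I would start from an arbitrary $F\in {\mathscr I}(M_n(\Delta))$ with $E\le F$, where $E=A\bigl(\begin{smallmatrix} I_\ell & O\\ O & O\end{smallmatrix}\bigr)A^{-1}$. By {Corollary~\ref{cor: the construction of all the idempotent greater than E}}, $F$ must be of the form
\[
F=A\left(
\begin{array}{c|c}
    I_\ell & O \\
    \hline
    O & B\left(\begin{array}{c|c} I_t & O \\ \hline O & O\end{array}\right)B^{-1}
\end{array}
\right)A^{-1}
\]
for some $0\le t\le n-\ell$ and some $B\in M_{n-\ell}(\Delta)^{\times}$. Since conjugation by the invertible matrix $A$ preserves rank and since the inner block is similar to $\bigl(\begin{smallmatrix} I_t & O\\ O & O\end{smallmatrix}\bigr)$, a direct rank count on the block-diagonal matrix in the middle yields $\operatorname{rank}(F)=\ell+t$.

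Next I would invoke {Proposition~\ref{prop: idempotents that cover E}}: $F$ covers $E$ exactly when $\operatorname{rank}(F)=\operatorname{rank}(E)+1=\ell+1$, which by the rank formula above forces $t=1$. Substituting $t=1$ into the parameterization gives the displayed form in the statement, and conversely every such matrix is an idempotent above $E$ of rank $\ell+1$ (by the same corollary) and hence covers $E$ by the other direction of {Proposition~\ref{prop: idempotents that cover E}}. I would also note that {Lemma~\ref{the existence of idempotent greater than E with rank greater by 1}} guarantees the parameter set is nonempty whenever $\ell<n$, so the statement is not vacuous.

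There is essentially no obstacle here beyond bookkeeping: both nontrivial ingredients are already established, and the argument is just selecting the right ``layer'' of the poset. The only point that deserves a sentence of care is the rank computation for the conjugated block, but this is immediate from the invariance of rank under similarity together with the block-diagonal structure.
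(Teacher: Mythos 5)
Your proposal is correct and follows exactly the paper's route: the paper proves this corollary in one line by citing Proposition~\ref{prop: idempotents that cover E} together with Corollary~\ref{cor: the construction of all the idempotent greater than E}, and your argument simply spells out the rank bookkeeping ($\operatorname{rank}(F)=\ell+t$, so covering forces $t=1$) that the paper leaves implicit. No gaps.
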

\begin{proof}
    By {Proposition~\ref{prop: idempotents that cover E}} and {Corollary~\ref{cor: the construction of all the idempotent greater than E}}.
\end{proof}

To construct the whole poset, we need to do {Corollary~\ref{cor: the construction of idempotents that cover E}} recursively.
We use $n=4$ to illustrate the process.
Note that if $E,F\in {\mathscr I}(M_n(\Delta))$ and $F$ covers $E$, then we write $E\lessdot F$ or $F\gtrdot E$.

\begin{example}
    Consider the case $n=4$.
    We have the following partial order relation.
    \[
    \left(
    \begin{array}{c|c}
        \begin{array}{ccc}
         1&&  \\
         &1&  \\
         &&1 
    \end{array} & O \\
    \hline
         O&A_1\left(1\right)A_1^{-1} 
    \end{array}
    \right)
    \gtrdot
    \left(
    \begin{array}{c|c}
        \begin{array}{ccc}
         1&&  \\
         &1&  \\
         &&1 
    \end{array} & O \\
    \hline
         O&0
    \end{array}
    \right)
    \]
    \[
    \left(
    \begin{array}{c|c}
        \begin{array}{cc}
         1&  \\
         &1  
    \end{array} & O \\
    \hline
         O&
         A_2\left(\begin{array}{cc}
             1 &  \\
              & 0
         \end{array}\right)A_2^{-1}
    \end{array}
    \right)
    \gtrdot
    \left(
    \begin{array}{c|c}
        \begin{array}{cc}
         1&  \\
         &1  
    \end{array} & O \\
    \hline
         O&
         \begin{array}{cc}
             0 &  \\
              & 0
         \end{array}
    \end{array}
    \right)
    \]
    \[
    \left(
    \begin{array}{c|c}
        1 & O \\
    \hline
         O&
         A_3\left(\begin{array}{ccc}
             1 &&  \\
              & 0& \\
              &&0
         \end{array}\right)A_3^{-1}
    \end{array}
    \right)
    \gtrdot
    \left(
    \begin{array}{c|c}
        1 & O \\
    \hline
         O&
         \begin{array}{ccc}
             0 &&  \\
              & 0& \\
              &&0
         \end{array}
    \end{array}
    \right)
    \]
    \[
    A_4\left(
    \begin{array}{cccc}
         1&&&  \\
         &0&&  \\
         &&0&  \\
         &&&0
    \end{array}
    \right)A_4^{-1}
    \gtrdot
    \left(
    \begin{array}{cccc}
         0&&&  \\
         &0&&  \\
         &&0&  \\
         &&&0
    \end{array}
    \right)
    \]
    for $A_i\in M_i(\Delta)^{\times}$.
    Combining above inequality, we have
    \[
    \begin{array}{ll}
        I_4 &\gtrdot\ \ \ A_4\left(
    \begin{array}{c|c}
        1 & O \\
        \hline
        O & A_3\left(\begin{array}{c|c}
            1 & O \\
            \hline
            O &A_2\left(\begin{array}{cc}
                1 &0  \\
                0 &0 
            \end{array}\right)A_2^{-1} 
        \end{array}\right)A_3^{-1}
    \end{array}
    \right)A_4^{-1}   \\
         & \gtrdot\ \ \ A_4\left(
    \begin{array}{c|c}
        1 & O \\
        \hline
        O & A_3\left(\begin{array}{ccc}
            1 & & \\
             &0&  \\
             &&0
        \end{array}\right)A_3^{-1}
    \end{array}
    \right)A_4^{-1}  \\
          &\gtrdot\ \ \ A_4\left(\begin{array}{cccc}
               1&&&  \\
               &0&&  \\
               &&0&  \\
               &&&0
          \end{array}\right)A_4^{-1} \\
          & \gtrdot\ \ \ O_4.
    \end{array}
    \]
    When $A_i$ runs over all element in $M_i(\Delta)^{\times}$, we obtain the complete poset.
    \hfill $\square$
\end{example}

Although this method is quite cumbersome, it allows us to gain deeper insights into the structure of the poset $({\mathscr I}(M_n(\Delta)),\le)$.
Note that $E\le F$ if and only if $I_n-F\le I_n-E$, so the poset is symmetric upside down.

Because $M_n({\mathbb F}_q)$ has a finite number of elements, with the above discussion, we can sketch this poset explicitly when $\Delta={\mathbb F}_q$ is a finite field.
To be more convenient, we define ${\mathscr I}^n_r(q)$\index{${\mathscr I}^n_r(q)$,${\mathscr I}^n_r$} be the number of $n\times n$ idempotent matrix in $M_n({\mathbb F}_q)$ of rank $r$. i.e.,
\[
{\mathscr I}^n_r(q):=|\{E\in {\mathscr I}(M_n({\mathbb F}_q))\mid \operatorname{rank}(E)=r\}|.
\]
In fact, the explicit formula for ${\mathscr I}^n_r(q)$, see \cite[page 60]{numofidempotent}, can be written as
\[
{\mathscr I}^n_r(q)=\begin{bmatrix}
     n  \\
     r
\end{bmatrix}_q\cdot q^{r(n-r)}
\]
where $\begin{bmatrix}
     n  \\
     r
\end{bmatrix}_q$ is the Gaussian binomial coefficient.
When the discussion is under the same $q$, we will abbreviate ${\mathscr I}^n_r(q)$ as ${\mathscr I}^n_r$.

\begin{example}
    Continue the previous example. Let $\Delta={\mathbb F}_q$ be a finite field and ${\mathscr I}^t_1$ be the number of $t\times t$ idempotent matrices of rank $1$.
    Because
    \[
    O_4\lessdot A_4\left(\begin{array}{cccc}
         1&&&  \\
         &0&&  \\
         &&0&  \\
         &&&0
    \end{array}\right)
    A_4^{-1},\ \forall\ A_4\in {\rm GL}_4({\mathbb F_q})
    \]
    and
    \[
    {\mathscr I}^4_1=\left|\left\{\left.A_4\left(\begin{array}{cccc}
         1&&&  \\
         &0&&  \\
         &&0&  \\
         &&&0
    \end{array}\right)
    A_4^{-1}\right\vert A_4\in {\rm GL}_4({\mathbb F}_q)\right\}\right|,
    \]
    we know that $O_4$ connects ${\mathscr I}^4_1$ lines to the first layer.
    Because
    \[
    A_4\left(
    \begin{array}{c|c}
        1 & O \\
    \hline
         O&
         A_3\left(\begin{array}{ccc}
             1 &&  \\
              & 0& \\
              &&0
         \end{array}\right)A_3^{-1}
    \end{array}
    \right)A_4^{-1}
    \gtrdot
    A_4\left(
    \begin{array}{c|c}
        1 & O \\
    \hline
         O&
         \begin{array}{ccc}
             0 &&  \\
              & 0& \\
              &&0
         \end{array}
    \end{array}
    \right)A_4^{-1}
    \]
    for all $A_3\in {\rm GL}_3({\mathbb F}_q)$ and
    \[
    {\mathscr I}^3_1=\left|\left\{\left.A_4\left(
    \begin{array}{c|c}
        1 & O \\
    \hline
         O&
         A_3\left(\begin{array}{ccc}
             1 &&  \\
              & 0& \\
              &&0
         \end{array}\right)A_3^{-1}
    \end{array}
    \right)A_4^{-1}\right\vert A_3\in {\rm GL}_3({\mathbb F}_q)\right\}\right|,
    \]
    we know that each element in layer $1$ connects ${\mathscr I}^3_1$ lines to the layer $2$.
    Because
    \begin{multline*}
        A_4\left(
    \begin{array}{c|c}
        1 & O \\
        \hline
        O & A_3\left(\begin{array}{c|c}
            1 & O \\
            \hline
            O &A_2\left(\begin{array}{cc}
                1 &0  \\
                0 &0 
            \end{array}\right)A_2^{-1} 
        \end{array}\right)A_3^{-1}
    \end{array}
    \right)A_4^{-1} \\
    \gtrdot A_4\left(
    \begin{array}{c|c}
        1 & O \\
        \hline
        O & A_3\left(\begin{array}{ccc}
            1 & & \\
             &0&  \\
             &&0
        \end{array}\right)A_3^{-1}
    \end{array}
    \right)A_4^{-1}
    \end{multline*}
    for all $A_2\in {\rm GL}_2({\mathbb F}_q)$ and ${\mathscr I}^2_1$ equals to
    \[
    \left|\left\{\left.A_4\left(
    \begin{array}{c|c}
        1 & O \\
        \hline
        O & A_3\left(\begin{array}{c|c}
            1 & O \\
            \hline
            O &A_2\left(\begin{array}{cc}
                1 &0  \\
                0 &0 
            \end{array}\right)A_2^{-1} 
        \end{array}\right)A_3^{-1}
    \end{array}
    \right)A_4^{-1}\right\vert A_2\in {\rm GL}_2({\mathbb F}_q)\right\}\right|,
    \]
    we know that each element in layer $2$ connects ${\mathscr I}^2_1$ lines to the layer $3$.
    Because
    \[
    I_4 \gtrdot A_4\left(
    \begin{array}{c|c}
        1 & O \\
        \hline
        O & A_3\left(\begin{array}{c|c}
            1 & O \\
            \hline
            O &A_2\left(\begin{array}{cc}
                1 &0  \\
                0 &0 
            \end{array}\right)A_2^{-1} 
        \end{array}\right)A_3^{-1}
    \end{array}
    \right)A_4^{-1},
    \]
    we know that each element in layer $3$ connects ${\mathscr I}^1_1=1$ line to the layer $4$ (the final layer).
    \hfill $\square$
\end{example}

\begin{example}
\label{exp: I(M_3(F_2))}
We give three explicit examples. Let $\Delta={\mathbb F}_2$. The following three Hasse diagrams are the posets of ${\mathscr I}(M_n({\mathbb F}_2))$ for $n=1,2,3$, respectively.

\[
\begin{tikzpicture}[scale=.5]
  \node (0)  at ( 0, -3) {$0$};
  \node (1)  at ( 0,  3)  {$1$};
  \node (2)  at ( 0,  -6.5)  {};
  \draw (0) -- (1);
\end{tikzpicture}
\qquad\qquad
\begin{tikzpicture}[scale=1]
  \node (0)  at ( 0, -3) {$O_2$};
  \node (1)  at ( 0,  3)  {$I_2$};
  \node (2)  at (-5,  0)  {$\left(\begin{array}{cc}
      1 & 0 \\
      0 & 0
  \end{array}\right)$};
  \node (3)  at (-3,  0)  {$\left(\begin{array}{cc}
      1 & 0 \\
      1 & 0
  \end{array}\right)$};
  \node (4)  at (-1,  0)  {$\left(\begin{array}{cc}
      1 & 1 \\
      0 & 0
  \end{array}\right)$};
  \node (5)  at ( 1,  0)  {$\left(\begin{array}{cc}
      0 & 1 \\
      0 & 1
  \end{array}\right)$};
  \node (6)  at ( 3,  0)  {$\left(\begin{array}{cc}
      0 & 0 \\
      1 & 1
  \end{array}\right)$};
  \node (7)  at ( 5,  0)  {$\left(\begin{array}{cc}
      0 & 0 \\
      0 & 1
  \end{array}\right)$};
  \draw (0) -- (2);
  \draw (0) -- (3);
  \draw (0) -- (4);
  \draw (0) -- (5);
  \draw (0) -- (6);
  \draw (0) -- (7);
  \draw (1) -- (2);
  \draw (1) -- (3);
  \draw (1) -- (4);
  \draw (1) -- (5);
  \draw (1) -- (6);
  \draw (1) -- (7);
\end{tikzpicture}
\]
\[
\begin{tikzpicture}[scale=.26]
  \node [mynode] (0)  at ( 0, -15) {};
  \node [mynode] (1)  at (-27,  -5)  {};
  \node [mynode] (2)  at (-25,  -5)  {};
  \node [mynode] (3)  at (-23,  -5)  {};
  \node [mynode] (4)  at (-21,  -5)  {};
  \node [mynode] (5)  at (-19,  -5)  {};
  \node [mynode] (6)  at (-17,  -5)  {};
  \node [mynode] (7)  at ( -27,  5)  {};
  \node [mynode] (8)  at (-15,  -5)  {};
  \node [mynode] (9)  at ( -25,  5)  {};
  \node [mynode] (10) at (-13,  -5)  {};
  \node [mynode] (11) at (-11,  -5)  {};
  \node [mynode] (12) at ( -9,  -5)  {};
  \node [mynode] (13) at ( -7,  -5)  {};
  \node [mynode] (14) at ( -5,  -5)  {};
  \node [mynode] (15) at ( -23,  5)  {};
  \node [mynode] (16) at ( -3,  -5)  {};
  \node [mynode] (17) at ( -1,  -5)  {};
  \node [mynode] (18) at (  1,  -5)  {};
  \node [mynode] (19) at (  3,  -5)  {};
  \node [mynode] (20) at ( -21,  5)  {};
  \node [mynode] (21) at ( -17,  5)  {};
  \node [mynode] (22) at (  5,  -5)  {};
  \node [mynode] (23) at ( -15,  5)  {};
  \node [mynode] (24) at ( -13,  5)  {};
  \node [mynode] (25) at ( -19,  5)  {};
  \node [mynode] (26) at (   1,  5)  {};
  \node [mynode] (27) at ( 0,  15)  {};
  \node [mynode] (28) at (   7,  5)  {};
  \node [mynode] (29) at (  -7,  5)  {};
  \node [mynode] (30) at (  7,  -5)  {};
  \node [mynode] (31) at ( -11,  5)  {};
  \node [mynode] (32) at (  -5,  5)  {};
  \node [mynode] (33) at (  9,  -5)  {};
  \node [mynode] (34) at ( 11,  -5)  {};
  \node [mynode] (35) at (   3,  5)  {};
  \node [mynode] (36) at (   9,  5)  {};
  \node [mynode] (37) at ( 13,  -5)  {};
  \node [mynode] (38) at ( 15,  -5)  {};
  \node [mynode] (39) at (  -3,  5)  {};
  \node [mynode] (40) at ( 17,  -5)  {};
  \node [mynode] (41) at (  -9,  5)  {};
  \node [mynode] (42) at (  11,  5)  {};
  \node [mynode] (43) at (   5,  5)  {};
  \node [mynode] (44) at ( 19,  -5)  {};
  \node [mynode] (45) at (  -1,  5)  {};
  \node [mynode] (46) at ( 21,  -5)  {};
  \node [mynode] (47) at ( 23,  -5)  {};
  \node [mynode] (48) at (  13,  5)  {};
  \node [mynode] (49) at ( 25,  -5)  {};
  \node [mynode] (50) at (  17,  5)  {};
  \node [mynode] (51) at (  19,  5)  {};
  \node [mynode] (52) at (  21,  5)  {};
  \node [mynode] (53) at (  23,  5)  {};
  \node [mynode] (54) at (  25,  5)  {};
  \node [mynode] (55) at (  27,  5)  {};
  \node [mynode] (56) at (  15,  5)  {};
  \node [mynode] (57) at ( 27,  -5)  {};
  \draw (0) -- (1);
  \draw (0) -- (2);
  \draw (0) -- (3);
  \draw (0) -- (4);
  \draw (0) -- (5);
  \draw (0) -- (6);
  \draw (0) -- (8);
  \draw (0) -- (10);
  \draw (0) -- (11);
  \draw (0) -- (12);
  \draw (0) -- (13);
  \draw (0) -- (14);
  \draw (0) -- (16);
  \draw (0) -- (17);
  \draw (0) -- (18);
  \draw (0) -- (19);
  \draw (0) -- (22);
  \draw (0) -- (30);
  \draw (0) -- (33);
  \draw (0) -- (34);
  \draw (0) -- (37);
  \draw (0) -- (38);
  \draw (0) -- (40);
  \draw (0) -- (44);
  \draw (0) -- (46);
  \draw (0) -- (47);
  \draw (0) -- (49);
  \draw (0) -- (57);
  \draw (27) -- (7);
  \draw (27) -- (9);
  \draw (27) -- (15);
  \draw (27) -- (20);
  \draw (27) -- (25);
  \draw (27) -- (21);
  \draw (27) -- (23);
  \draw (27) -- (24);
  \draw (27) -- (31);
  \draw (27) -- (41);
  \draw (27) -- (29);
  \draw (27) -- (32);
  \draw (27) -- (39);
  \draw (27) -- (45);
  \draw (27) -- (26);
  \draw (27) -- (35);
  \draw (27) -- (43);
  \draw (27) -- (28);
  \draw (27) -- (36);
  \draw (27) -- (42);
  \draw (27) -- (48);
  \draw (27) -- (56);
  \draw (27) -- (50);
  \draw (27) -- (51);
  \draw (27) -- (52);
  \draw (27) -- (53);
  \draw (27) -- (54);
  \draw (27) -- (55);
  \draw (1) -- (7);
  \draw (1) -- (9);
  \draw (1) -- (24);
  \draw (1) -- (29);
  \draw (1) -- (26);
  \draw (1) -- (28);
  \draw (2) -- (7);
  \draw (2) -- (15);
  \draw (2) -- (25);
  \draw (2) -- (21);
  \draw (2) -- (32);
  \draw (2) -- (36);
  \draw (3) -- (9);
  \draw (3) -- (15);
  \draw (3) -- (20);
  \draw (3) -- (23);
  \draw (3) -- (31);
  \draw (3) -- (35);
  \draw (4) -- (7);
  \draw (4) -- (20);
  \draw (4) -- (41);
  \draw (4) -- (39);
  \draw (4) -- (26);
  \draw (4) -- (51);
  \draw (5) -- (9);
  \draw (5) -- (21);
  \draw (5) -- (41);
  \draw (5) -- (39);
  \draw (5) -- (28);
  \draw (5) -- (50);
  \draw (6) -- (7);
  \draw (6) -- (23);
  \draw (6) -- (24);
  \draw (6) -- (43);
  \draw (6) -- (42);
  \draw (6) -- (53);
  \draw (8) -- (9);
  \draw (8) -- (25);
  \draw (8) -- (29);
  \draw (8) -- (43);
  \draw (8) -- (42);
  \draw (8) -- (52);
  \draw (10) -- (7);
  \draw (10) -- (21);
  \draw (10) -- (31);
  \draw (10) -- (45);
  \draw (10) -- (43);
  \draw (10) -- (54);
  \draw (11) -- (9)
        (11) -- (20)
        (11) -- (32)
        (11) -- (45)
        (11) -- (42)
        (11) -- (55);
  \draw (12) -- (7)
        (12) -- (25)
        (12) -- (41)
        (12) -- (35)
        (12) -- (48)
        (12) -- (55);
  \draw (13) -- (15)
        (13) -- (24)
        (13) -- (41)
        (13) -- (32)
        (13) -- (48)
        (13) -- (52);
  \draw (14) -- (15)
        (14) -- (45)
        (14) -- (26)
        (14) -- (43)
        (14) -- (36)
        (14) -- (50);
  \draw (16) -- (15)
        (16) -- (45)
        (16) -- (35)
        (16) -- (28)
        (16) -- (42)
        (16) -- (51);
  \draw (17) -- (9)
        (17) -- (23)
        (17) -- (39)
        (17) -- (36)
        (17) -- (48)
        (17) -- (54);
  \draw (18) -- (15)
        (18) -- (31)
        (18) -- (29)
        (18) -- (39)
        (18) -- (48)
        (18) -- (53);
  \draw (19) -- (20)
        (19) -- (21)
        (19) -- (24)
        (19) -- (29)
        (19) -- (50)
        (19) -- (51);
  \draw (22) -- (25)
        (22) -- (23)
        (22) -- (26)
        (22) -- (28)
        (22) -- (52)
        (22) -- (53);
  \draw (30) -- (25)
        (30) -- (31)
        (30) -- (32)
        (30) -- (26)
        (30) -- (50)
        (30) -- (54);
  \draw (33) -- (23)
        (33) -- (31)
        (33) -- (32)
        (33) -- (28)
        (33) -- (51)
        (33) -- (55);
  \draw (34) -- (21)
        (34) -- (24)
        (34) -- (35)
        (34) -- (36)
        (34) -- (52)
        (34) -- (55);
  \draw (37) -- (20)
        (37) -- (29)
        (37) -- (35)
        (37) -- (36)
        (37) -- (53)
        (37) -- (54);
  \draw (38) -- (20)
        (38) -- (25)
        (38) -- (39)
        (38) -- (43)
        (38) -- (56)
        (38) -- (55);
  \draw (40) -- (21)
        (40) -- (23)
        (40) -- (41)
        (40) -- (42)
        (40) -- (56)
        (40) -- (54);
  \draw (44) -- (24)
        (44) -- (31)
        (44) -- (41)
        (44) -- (45)
        (44) -- (56)
        (44) -- (53);
  \draw (46) -- (29)
        (46) -- (32)
        (46) -- (39)
        (46) -- (45)
        (46) -- (56)
        (46) -- (52);
  \draw (47) -- (26)
        (47) -- (35)
        (47) -- (43)
        (47) -- (48)
        (47) -- (56)
        (47) -- (51);
  \draw (49) -- (28)
        (49) -- (36)
        (49) -- (42)
        (49) -- (48)
        (49) -- (56)
        (49) -- (50);
  \draw (57) -- (50)
        (57) -- (51)
        (57) -- (52)
        (57) -- (53)
        (57) -- (54)
        (57) -- (55);
\end{tikzpicture}
\]
\hfill $\square$
\end{example}

By observing the Hasse diagram of ${\mathscr I}(M_n({\mathbb F}_2))$, we found that the poset of ${\mathscr I}(M_3({\mathbb F}_2))$ contains many copies of the poset of ${\mathscr I}(M_2({\mathbb F}_2))$.
To describe this phenomenon, we provide the following definition.
The whole picture will be stated in {Theorem~\ref{thm: interval isomorphic to smaller poset}}.

\begin{definition}
    Let ${\mathcal P}_1=(X_1,\preceq_1)$ and ${\mathcal P}_2=(X_2,\preceq_2)$ be two posets.
    An {\bf order isomorphism}\index{order isomorphism} from ${\mathcal P}_1$ to ${\mathcal P}_2$ is a bijection $f:X_1\to X_2$ satisfying the following property: for all $x,y\in X_1$, $x\preceq_1 y$ if and only if $f(x)\preceq_2 f(y)$.
    If there is an order isomorphism between ${\mathcal P}_1$ and ${\mathcal P}_2$, then we say ${\mathcal P}_1$ and ${\mathcal P}_2$ are {\bf isomorphic} and write ${\mathcal P}_1\simeq {\mathcal P}_2$.
\end{definition}

For abbreviation, we may write $X_1\simeq X_2$ to stand for ${\mathcal P}_1\simeq {\mathcal P}_2$. To prove {Theorem~\ref{thm: interval isomorphic to smaller poset}}, we need a bunch of lemmas. 

\begin{lemma}
    \label{lem: conjugation is an order isomorphism}
    For any $A\in M_n(\Delta)^{\times}$, the conjugation map $\phi_A:{\mathscr I}(M_n(\Delta))\to {\mathscr I}(M_n(\Delta))$ defined by $\phi_A(E)=AEA^{-1}$ for any $E\in {\mathscr I}(M_n(\Delta))$ is an order isomorphism. 
    Furthermore, if $X$ is a subset of ${\mathscr I}(M_n(\Delta))$, then $\phi_A(X)\simeq X$.
\end{lemma}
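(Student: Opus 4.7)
The plan is to verify directly the three things required to be an order isomorphism: well-definedness (idempotents go to idempotents), bijectivity, and the biconditional for the partial order. All of these will be immediate algebraic manipulations exploiting that conjugation by an invertible matrix is a ring automorphism of $M_n(\Delta)$.

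First I would check that $\phi_A$ lands in ${\mathscr I}(M_n(\Delta))$: if $E^2=E$, then $(AEA^{-1})^2 = AE(A^{-1}A)EA^{-1} = AE^2A^{-1} = AEA^{-1}$, so $\phi_A(E)\in {\mathscr I}(M_n(\Delta))$. Next I would observe that $\phi_{A^{-1}}\circ \phi_A$ and $\phi_A\circ \phi_{A^{-1}}$ are both the identity on ${\mathscr I}(M_n(\Delta))$, giving bijectivity with inverse $\phi_{A^{-1}}$.

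For the order-preserving property, I would unwind the definition of $\le$. Suppose $E\le F$, so $EF=E=FE$. Then
\[
\phi_A(E)\phi_A(F) = AEA^{-1}AFA^{-1} = AEFA^{-1} = AEA^{-1} = \phi_A(E),
\]
and similarly $\phi_A(F)\phi_A(E) = \phi_A(E)$, giving $\phi_A(E)\le \phi_A(F)$. For the converse, apply the same argument to the inverse map $\phi_{A^{-1}}$: if $\phi_A(E)\le \phi_A(F)$, then $\phi_{A^{-1}}\phi_A(E)\le \phi_{A^{-1}}\phi_A(F)$, i.e., $E\le F$. This yields the biconditional, so $\phi_A$ is an order isomorphism.

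Finally, for the ``furthermore'' part, I would simply restrict $\phi_A$ to $X$. Since $\phi_A$ is a bijection, its restriction to $X$ is a bijection onto $\phi_A(X)$, and the biconditional $x\le y \Leftrightarrow \phi_A(x)\le \phi_A(y)$ holds automatically for $x,y\in X$. Hence $(X,\le)\simeq (\phi_A(X),\le)$. There is no serious obstacle in this lemma; the only thing to be careful about is invoking the biconditional on both directions of the order comparison rather than just the forward implication, which is why using $\phi_{A^{-1}}$ as the explicit inverse streamlines the argument.
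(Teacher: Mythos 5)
Your proposal is correct and follows essentially the same route as the paper's proof: check well-definedness via $(AEA^{-1})^2=AE^2A^{-1}$, note bijectivity with inverse $\phi_{A^{-1}}$, and transport the relation $EF=E=FE$ through conjugation. You are in fact slightly more careful than the paper, which only states the forward implication $E\le F\Rightarrow \phi_A(E)\le\phi_A(F)$ and leaves the converse implicit, whereas you explicitly obtain it by applying the same argument to $\phi_{A^{-1}}$.
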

\begin{proof}
    Let $E\in {\mathscr I}(M_n(\Delta))$, then the conjugation $AEA^{-1}\in {\mathscr I}(M_n(\Delta))$. Thus, the map $\phi_A$ is well-defined.
    If $E\le F$, then we have $AEA^{-1}\le AFA^{-1}$ and so $\phi_A(E)\le \phi_A(F)$.
    Also, it is easy to see that the conjugation map $\phi_A$ is a bijection.
    Therefore, $\phi_A$ is an order isomorphism.
\end{proof}

\begin{lemma}
    \label{lem: identify idempotent matrix in smaller form}
    Let $0\le \ell\le n$.
    Let $E\in M_n(\Delta)$ and assume
    \[
    E=A\left(\begin{array}{c|c}
       I_\ell  & O  \\
       \hline
        O & S 
    \end{array}\right)
    A^{-1}
    \]
    for some invertible matrix $A$ and $S\in M_{n-\ell}(\Delta)$.
    Then $S$ is uniquely determined by $A$.
    Moreover, we have $E\in {\mathscr I}(M_n(\Delta))$ if and only if $S\in {\mathscr I}(M_{n-\ell}(\Delta))$.
\end{lemma}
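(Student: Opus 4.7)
The plan is to treat the two assertions separately, both by direct manipulation of the block form.

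For the uniqueness claim, I would start from the given identity $E = A M A^{-1}$ where $M = \left(\begin{array}{c|c} I_\ell & O \\ \hline O & S \end{array}\right)$, and multiply on the left by $A^{-1}$ and on the right by $A$ to obtain $M = A^{-1} E A$. The matrix $A^{-1}EA$ is entirely determined by the data $(E,A)$, so every block of $M$, including its lower-right $(n-\ell)\times(n-\ell)$ block $S$, is determined. In particular, if there were another $S'$ giving the same $E$ with the same $A$, the block equation would force $S=S'$.

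For the idempotent equivalence, I would carry out a direct block computation. Writing $M = \left(\begin{array}{c|c} I_\ell & O \\ \hline O & S \end{array}\right)$, block multiplication gives
\[
M^2 = \left(\begin{array}{c|c} I_\ell & O \\ \hline O & S^2 \end{array}\right),
\]
so $E^2 = A M A^{-1} \cdot A M A^{-1} = A M^2 A^{-1}$. Hence $E^2 = E$ if and only if $A M^2 A^{-1} = A M A^{-1}$, and since $A$ is invertible this is equivalent to $M^2 = M$. Comparing the two block forms, this in turn reduces to the single block equality $S^2 = S$, i.e.\ $S \in {\mathscr I}(M_{n-\ell}(\Delta))$.

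I do not anticipate any real obstacle here: the argument is essentially bookkeeping, and the only point deserving care is making sure the block multiplication $M \cdot M$ is computed correctly (so that the identity block $I_\ell$ persists and the off-diagonal zero blocks do not produce cross terms), which is immediate from the shape of $M$.
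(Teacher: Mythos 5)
Your proposal is correct and follows essentially the same route as the paper: uniqueness by conjugating back to $A^{-1}EA$ and reading off the lower-right block, and the idempotency equivalence by the block computation $E^2 = AM^2A^{-1}$ reducing to $S^2=S$. No gaps.
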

\begin{proof}
    Suppose
    \[
    A\left(\begin{array}{c|c}
       I_\ell  & O  \\
       \hline
        O & S 
    \end{array}\right)
    A^{-1}
    =E=
    A\left(\begin{array}{c|c}
       I_\ell  & O  \\
       \hline
        O & S' 
    \end{array}\right)
    A^{-1},
    \]
    then
    \[
    \left(\begin{array}{c|c}
       I_\ell  & O  \\
       \hline
        O & S 
    \end{array}\right)
    =A^{-1}EA=
    \left(\begin{array}{c|c}
       I_\ell  & O  \\
       \hline
        O & S' 
    \end{array}\right).
    \]
    Thus, $S=S'$.
    
    From the block multiplication, we know that
    \[
    E^2=A\left(\begin{array}{c|c}
       I_\ell  & O  \\
       \hline
        O & S 
    \end{array}\right)
    A^{-1}
    A\left(\begin{array}{c|c}
       I_\ell  & O  \\
       \hline
        O & S 
    \end{array}\right)
    A^{-1}
    =A\left(\begin{array}{c|c}
       I_\ell  & O  \\
       \hline
        O & S^2 
    \end{array}\right)
    A^{-1}.
    \]
    Thus, $E^2=E$ if and only if $S^2=S$.
    The result follows.
\end{proof}

With the notation in {Lemma~\ref{lem: identify idempotent matrix in smaller form}}, the ordering of $E\in {\mathscr I}(M_n(\Delta))$ is determined by the ordering of $S\in {\mathscr I}(M_{n-\ell}(\Delta))$. That is,

\begin{lemma}
    \label{lem: E<F iff S<T}
    Let $0\le \ell\le n$.
    Let $E,F$ be matrices in ${\mathscr I}(M_n(\Delta))$ which can be written as
    \[
    E=A\left(\begin{array}{c|c}
       I_\ell  & O  \\
       \hline
        O & S 
    \end{array}\right)
    A^{-1},\ 
    F=A\left(\begin{array}{c|c}
       I_\ell  & O  \\
       \hline
        O & T 
    \end{array}\right)
    A^{-1}
    \]
    for some invertible matrix $A$ and $S,T\in {\mathscr I}(M_{n-\ell}(\Delta))$ that are uniquely determined in {Lemma~\ref{lem: identify idempotent matrix in smaller form}}.
    Then $E\le F$ if and only if $S\le T$.
\end{lemma}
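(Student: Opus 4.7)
The plan is to reduce to the case $A = I_n$ via the conjugation isomorphism $\phi_A$ from Lemma~\ref{lem: conjugation is an order isomorphism}, and then carry out the equivalence by block matrix multiplication. Since $\phi_A$ is an order isomorphism, we have $E \le F$ if and only if $\phi_{A^{-1}}(E) \le \phi_{A^{-1}}(F)$, i.e. if and only if
\[
\left(\begin{array}{c|c} I_\ell & O \\ \hline O & S \end{array}\right)
\le
\left(\begin{array}{c|c} I_\ell & O \\ \hline O & T \end{array}\right).
\]
This reduction handles the role of $A$ cleanly and lets us focus only on the block-diagonal representatives.

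Next, I would compute the two products that define the relation $\le$, using block multiplication. The key observation is that
\[
\left(\begin{array}{c|c} I_\ell & O \\ \hline O & S \end{array}\right)
\left(\begin{array}{c|c} I_\ell & O \\ \hline O & T \end{array}\right)
=
\left(\begin{array}{c|c} I_\ell & O \\ \hline O & ST \end{array}\right),
\]
and symmetrically the reversed product gives the bottom-right block $TS$. Thus the conditions $EF = E$ and $FE = E$ translate, block by block, into $ST = S$ and $TS = S$, because the other three blocks match trivially. By the uniqueness part of Lemma~\ref{lem: identify idempotent matrix in smaller form}, two matrices of this block form are equal precisely when the bottom-right blocks agree, so no spurious identifications occur.

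Combining these two steps, $E \le F$ is equivalent to the conjunction $ST = S = TS$, which is by definition $S \le T$ in $\mathscr{I}(M_{n-\ell}(\Delta))$. I do not expect any serious obstacle: the argument is essentially a bookkeeping exercise, with the only subtle point being the appeal to the uniqueness of the bottom-right block $S$ (respectively $T$) so that passing between $E$ and $S$ is reversible. Both ingredients, the conjugation order isomorphism and the block-form uniqueness, have already been established, so the lemma follows by direct assembly.
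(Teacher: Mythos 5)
Your proposal is correct and follows essentially the same route as the paper: the paper likewise reduces everything to the block products, computing $EF=A\left(\begin{smallmatrix} I_\ell & O \\ O & ST\end{smallmatrix}\right)A^{-1}$ and $FE=A\left(\begin{smallmatrix} I_\ell & O \\ O & TS\end{smallmatrix}\right)A^{-1}$ and reading off the equivalence, with your preliminary conjugation by $\phi_{A^{-1}}$ being only a cosmetic repackaging of the cancellation of the $A$'s. Your explicit appeal to the uniqueness of the bottom-right block is a welcome clarification of a step the paper leaves implicit.
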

\begin{proof}
    From the block multiplication, we know that
    \[
    EF=A\left(\begin{array}{c|c}
       I_\ell  & O  \\
       \hline
        O & S 
    \end{array}\right)
    A^{-1}
    A\left(\begin{array}{c|c}
       I_\ell  & O  \\
       \hline
        O & T 
    \end{array}\right)
    A^{-1}
    =A\left(\begin{array}{c|c}
       I_\ell  & O  \\
       \hline
        O & ST 
    \end{array}\right)
    A^{-1}
    \]
    and
    \[
    FE=A\left(\begin{array}{c|c}
       I_\ell  & O  \\
       \hline
        O & S 
    \end{array}\right)
    A^{-1}
    A\left(\begin{array}{c|c}
       I_\ell  & O  \\
       \hline
        O & T 
    \end{array}\right)
    A^{-1}
    =
    A\left(\begin{array}{c|c}
       I_\ell  & O  \\
       \hline
        O & TS 
    \end{array}\right)
    A^{-1}.
    \]
    Therefore, $E\le F$ if and only if $S\le T$.
\end{proof}

\begin{lemma}
    \label{lem: D_t is isomorphic to I_t}
    Let $0\le \ell\le n$ and $D_\ell\in {\mathscr I}(M_n(\Delta))$ with the form
    \[
    D_\ell=\left(\begin{array}{c|c}
        I_\ell & O \\
        \hline
        O & O_{n-\ell}
    \end{array}\right).
    \]
    Then we have $[O_n,D_\ell]\simeq [O_\ell,I_\ell]={\mathscr I}(M_\ell(\Delta))$.
\end{lemma}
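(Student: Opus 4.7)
My plan is to construct an explicit order isomorphism $\psi: \mathscr{I}(M_\ell(\Delta)) \to [O_n, D_\ell]$ by
\[
\psi(A) = \left(\begin{array}{c|c} A & O \\ \hline O & O_{n-\ell} \end{array}\right),
\]
and then verify the three conditions: well-defined, bijective, and order-preserving in both directions. Note first that the lower bound of the interval is automatic since $O_n \cdot E = O_n = E \cdot O_n$ for every idempotent $E$, so $[O_n, D_\ell]$ is simply $\{E \in \mathscr{I}(M_n(\Delta)) : E \le D_\ell\}$.

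For well-definedness, a $2 \times 2$ block computation shows that $\psi(A)^2 = \psi(A^2) = \psi(A)$, so $\psi(A) \in \mathscr{I}(M_n(\Delta))$; and $\psi(A) D_\ell = \psi(A) = D_\ell\, \psi(A)$ is immediate, so $\psi(A) \le D_\ell$. For surjectivity, given $E \in [O_n, D_\ell]$, write $E$ in block form $E = \left(\begin{smallmatrix} P & Q \\ R & S \end{smallmatrix}\right)$ compatible with the decomposition $n = \ell + (n-\ell)$; then the two equations $E D_\ell = E$ and $D_\ell E = E$ force $Q = O$, $R = O$, and $S = O$, leaving $E = \psi(P)$, and the idempotency of $E$ forces $P^2 = P$, so $P \in \mathscr{I}(M_\ell(\Delta))$. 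Injectivity is transparent from the formula for $\psi$.

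For the order-preserving property in both directions, observe
\[
\psi(A)\,\psi(B) = \left(\begin{array}{c|c} AB & O \\ \hline O & O \end{array}\right) = \psi(AB),
\]
and likewise $\psi(B)\psi(A) = \psi(BA)$. Hence $\psi(A) \le \psi(B)$ exactly when $AB = A = BA$, i.e. $A \le B$. This completes the isomorphism.

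The argument is entirely mechanical once the correct map is identified; the only mildly subtle point is the surjectivity step, where one has to extract from $E \le D_\ell$ the conclusion that the off-diagonal and lower-right blocks vanish. I do not anticipate any real obstacle here, as everything reduces to block arithmetic analogous to what is carried out in Lemmas~\ref{lem: identify idempotent matrix in smaller form} and \ref{lem: E<F iff S<T}.
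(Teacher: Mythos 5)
Your proposal is correct and is essentially the paper's own argument: the paper defines the map in the opposite direction, sending $H\in[O_n,D_\ell]$ to its upper-left block $V$, but the key steps — extracting the vanishing of the off-diagonal and lower-right blocks from $HD_\ell=H=D_\ell H$, reducing idempotency to $V^2=V$, and checking order preservation via block multiplication — are identical. The only cosmetic difference is that you work with the inverse map $\psi$ and verify well-definedness of $\psi$ rather than of $f$, which changes nothing of substance.
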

\begin{proof}
    Let $H\in [O_n,D_\ell]$ i.e., $O_n\le H\le D_\ell$, then $HD_\ell=H=D_\ell H$.
    Because $H=HD_\ell$, we know that ${\bf h}_i={\bf 0}$ for $\ell<i\le n$ where ${\bf h}_i$ is the $i$th column of $H$.
    Because $H=D_\ell H$, we know that $_i{\bf h}={\bf 0}^{\mathsf T}$ for $\ell<i\le n$ where $_i{\bf h}$ is the $i$th row of $H$.
    Thus,
    \[
    H=\left(\begin{array}{c|c}
        V & O  \\
        \hline
        O & O
    \end{array}\right)
    \]
    for some $V\in M_\ell(\Delta)$.
    Because $H^2=H$,
    \[
    \left(\begin{array}{c|c}
        V & O  \\
        \hline
        O & O
    \end{array}\right)=H=H^2=
    \left(\begin{array}{c|c}
        V & O  \\
        \hline
        O & O
    \end{array}\right)\left(\begin{array}{c|c}
        V & O  \\
        \hline
        O & O
    \end{array}\right)=
    \left(\begin{array}{c|c}
        V^2 & O  \\
        \hline
        O & O
    \end{array}\right)
    \]
    and so $V^2=V$ i.e., $V\in {\mathscr I}(M_\ell(\Delta))$.
    What we have shown is that for every $H\in [O_n,D_\ell]$, there exists a unique idempotent $V\in [O_\ell,I_\ell]$ such that
    \[
    H=\left(\begin{array}{c|c}
        V & O  \\
        \hline
        O & O
    \end{array}\right).
    \]

    Define $f:[O_n,D_\ell]\to [O_\ell,I_\ell]$ by $f(H)=V$. Then $f$ is a well-defined function by the above argument.
    Let $H_1,H_2\in [O_n,D_\ell]$ with $f(H_1)=f(H_2)$.
    We know that
    \[
    H_1=\left(\begin{array}{c|c}
        V_1 & O  \\
        \hline
        O & O
    \end{array}\right),\ 
    H_2=\left(\begin{array}{c|c}
        V_2 & O  \\
        \hline
        O & O
    \end{array}\right)
    \]
    and $V_1=f(H_1)=f(H_2)=V_2$. Thus, $H_1=H_2$ and so $f$ is injective.
    Let $V'\in [O_\ell,I_\ell]$ and define
    \[
    H'=\left(\begin{array}{c|c}
        V' & O  \\
        \hline
        O & O
    \end{array}\right).
    \]
    Then $H'\in [O_n,D_\ell]$ and $f(H')=V'$. Thus, $f$ is surjective.
    Combining above, we know that $f:[O_n,D_\ell]\to [O_\ell,I_\ell]$ is a bijection.

    We remaining to show that $f$ preserve the partial order.
    Let $X,Y\in [O_n,D_\ell]$. Then by the above discussion, we have
    \[
    X=\left(\begin{array}{c|c}
        f(X) & O  \\
        \hline
        O & O
    \end{array}\right),\ 
    Y=\left(\begin{array}{c|c}
        f(Y) & O  \\
        \hline
        O & O
    \end{array}\right).
    \]
    Because
    \[
    XY=\left(\begin{array}{c|c}
        f(X) & O  \\
        \hline
        O & O
    \end{array}\right)\left(\begin{array}{c|c}
        f(Y) & O  \\
        \hline
        O & O
    \end{array}\right)=
    \left(\begin{array}{c|c}
        f(X)f(Y) & O  \\
        \hline
        O & O
    \end{array}\right),
    \]
    and
    \[
    YX=\left(\begin{array}{c|c}
        f(Y) & O  \\
        \hline
        O & O
    \end{array}\right)\left(\begin{array}{c|c}
        f(X) & O  \\
        \hline
        O & O
    \end{array}\right)=
    \left(\begin{array}{c|c}
        f(Y)f(X) & O  \\
        \hline
        O & O
    \end{array}\right),
    \]
    we know that $X\le Y$ if and only if $f(X)\le f(Y)$.
    Therefore, $f$ is an order isomorphism and $[O_n,D_\ell]\simeq [O_\ell,I_\ell]$.
\end{proof}

In fact, we have following result.

\begin{theorem}
    \label{thm: interval isomorphic to smaller poset}
    Let $E\le F\in {\mathscr I}(M_n(\Delta))$. Then the subposet $[E,F]$ is order isomorphic to ${\mathscr I}(M_{\operatorname{rank}(F)-\operatorname{rank}(E)}(\Delta))$.
\end{theorem}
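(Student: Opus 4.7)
The plan is to chain together the lemmas just established, producing the isomorphism in three stages. Set $r = \operatorname{rank}(E)$ and $s = \operatorname{rank}(F) - \operatorname{rank}(E)$.

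First, I would apply Theorem~\ref{thm: all the idempotent greater than E} to the pair $E \le F$ to obtain a common invertible $A$ with
\[
E = A\left(\begin{array}{c|c} I_r & O \\ \hline O & O \end{array}\right)A^{-1}, \qquad
F = A\left(\begin{array}{c|c} I_r & O \\ \hline O & T \end{array}\right)A^{-1},
\]
for some $T \in \mathscr{I}(M_{n-r}(\Delta))$ with $\operatorname{rank}(T) = s$. Then for any $G \in [E,F]$, since $E \le G$ the same theorem forces $G$ to have the form $A\!\left(\begin{smallmatrix} I_r & O \\ O & S \end{smallmatrix}\right)\!A^{-1}$ for some idempotent $S \in \mathscr{I}(M_{n-r}(\Delta))$, and by Lemma~\ref{lem: identify idempotent matrix in smaller form} this $S$ is uniquely determined. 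By Lemma~\ref{lem: E<F iff S<T}, $G \le F$ is equivalent to $S \le T$. This realises an order-preserving bijection $[E,F] \to [O_{n-r}, T]$.

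Next, diagonalize $T$ via Proposition~\ref{diagonalization of idempotent matrix}: write $T = B D_s B^{-1}$ where $D_s = \left(\begin{smallmatrix} I_s & O \\ O & O \end{smallmatrix}\right) \in M_{n-r}(\Delta)$ and $B \in M_{n-r}(\Delta)^\times$. By Lemma~\ref{lem: conjugation is an order isomorphism}, conjugation by $B^{-1}$ is an order automorphism of $\mathscr{I}(M_{n-r}(\Delta))$ that sends $O_{n-r} \mapsto O_{n-r}$ and $T \mapsto D_s$, hence restricts to an order isomorphism $[O_{n-r}, T] \simeq [O_{n-r}, D_s]$.

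Finally, Lemma~\ref{lem: D_t is isomorphic to I_t} (with $n$ there replaced by $n-r$ and $\ell$ by $s$) gives $[O_{n-r}, D_s] \simeq [O_s, I_s] = \mathscr{I}(M_s(\Delta))$. Composing the three isomorphisms yields $[E,F] \simeq \mathscr{I}(M_{\operatorname{rank}(F)-\operatorname{rank}(E)}(\Delta))$, as required. There is no serious obstacle: every ingredient has been prepared, and the only care needed is bookkeeping to verify that the intermediate bijections respect endpoints, so that interval maps to interval at each stage.
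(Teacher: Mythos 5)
Your proposal is correct and follows essentially the same route as the paper's own proof: establish $[E,F]\simeq [O,T]$ via Theorem~\ref{thm: all the idempotent greater than E} together with Lemmas~\ref{lem: identify idempotent matrix in smaller form} and \ref{lem: E<F iff S<T}, then transport to $[O,D_s]$ by the conjugation isomorphism of Lemma~\ref{lem: conjugation is an order isomorphism}, and finish with Lemma~\ref{lem: D_t is isomorphic to I_t}. The only difference is presentational (you record $\operatorname{rank}(T)=\operatorname{rank}(F)-\operatorname{rank}(E)$ up front rather than at the end), so no changes are needed.
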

\begin{proof}
    Because $E$ is an idempotent,
    \[
    E=A\left(\begin{array}{c|c}
       I_{\operatorname{rank}(E)}  & O  \\
       \hline
        O &O 
    \end{array}\right)
    A^{-1}
    \]
    for some invertible matrix $A$.
    Because $E\le F$, by {Theorem~\ref{thm: all the idempotent greater than E}} we have
    \[
    F=A\left(\begin{array}{c|c}
       I_{\operatorname{rank}(E)}  & O  \\
       \hline
        O &T 
    \end{array}\right)
    A^{-1}
    \]
    where $T\in {\mathscr I}(M_{n-\operatorname{rank}(E)}(\Delta))$ is uniquely determined by {Lemma~\ref{lem: identify idempotent matrix in smaller form}}.
    We claim that $[E,F]\simeq [O,T]$. Let $G\in [E,F]$, then $E\le G\le F$. By {Theorem~\ref{thm: all the idempotent greater than E}} and {Lemma~\ref{lem: identify idempotent matrix in smaller form}}, we know that there exists a unique $S\in {\mathscr I}(M_{n-\operatorname{rank}(E)}(\Delta))$ such that
    \[
    G=A\left(\begin{array}{c|c}
       I_{\operatorname{rank}(E)}  & O  \\
       \hline
        O &S 
    \end{array}\right)
    A^{-1}
    \]
    and $S\le T$ i.e., $S\in [O,T]$.
    Define $f_A:[E,F]\to [O,T]$ by $f_A(G)=S$.
    Because the uniqueness of $S$, we know that the function $f_A$ is well-defined.
    Let $G_1,G_2\in [E,F]$ with $f_A(G_1)=f_A(G_2)$. We know that
    \[
    G_1=A\left(\begin{array}{c|c}
       I_{\operatorname{rank}(E)}  & O  \\
       \hline
        O &S_1 
    \end{array}\right)
    A^{-1},\ 
    G_2=A\left(\begin{array}{c|c}
       I_{\operatorname{rank}(E)}  & O  \\
       \hline
        O &S_2 
    \end{array}\right)
    A^{-1}
    \]
    and $S_1=f_A(G_1)=f_A(G_2)=S_2$. Thus, $G_1=G_2$ and so $f_A$ is injective.
    Let $S'\in [O,T]$ and define
    \[
    G'=A\left(\begin{array}{c|c}
       I_{\operatorname{rank}(E)}  & O  \\
       \hline
        O &S' 
    \end{array}\right)
    A^{-1}.
    \]
    Then by {Lemma~\ref{lem: E<F iff S<T}} we know that $G'\in [E,F]$ and $f_A(G')=S'$. Thus, $f_A$ is surjective.
    Combining above, we know that $f_A:[E,F]\to [O,T]$ is a bijection.
    We also need to show that $f_A$ preserve the order.
    Let $X,Y\in [E,F]$. Then by the above discussion, we have
    \[
    X=A\left(\begin{array}{c|c}
       I_{\operatorname{rank}(E)}  & O  \\
       \hline
        O &f_A(X) 
    \end{array}\right)
    A^{-1},\ 
    Y=A\left(\begin{array}{c|c}
       I_{\operatorname{rank}(E)}  & O  \\
       \hline
        O &f_A(Y)
    \end{array}\right)
    A^{-1}.
    \]
    By {Lemma~\ref{lem: E<F iff S<T}}, we know that $X\le Y$ if and only if $f_A(X)\le f_A(Y)$.
    Therefore, $f_A$ is an order isomorphism and $[E,F]\simeq [O,T]$.
    
    Because $T$ is an idempotent,
    \[
    T=
    B\left(\begin{array}{c|c}
       I_{\operatorname{rank}(T)}  & O  \\
       \hline
        O &O 
    \end{array}\right)
    B^{-1}
    \]
    for some invertible matrix $B$.
    Consider the conjugation $\phi_{B^{-1}}$. It is not hard to see that
    \[
    \phi_{B^{-1}}([O,T])=
    \left[O,\left(\begin{array}{c|c}
       I_{\operatorname{rank}(T)}  & O  \\
       \hline
        O &O 
    \end{array}\right)\right].
    \]
    Thus, by {Lemma~\ref{lem: conjugation is an order isomorphism}} we obtain
    \[
    [O,T]\simeq\left[O,\left(\begin{array}{c|c}
       I_{\operatorname{rank}(T)}  & O  \\
       \hline
        O &O 
    \end{array}\right)\right].
    \]
    Write $D_T=\left(\begin{array}{c|c}
       I_{\operatorname{rank}(T)}  & O  \\
       \hline
        O &O 
    \end{array}\right)$, then we have $[E,F]\simeq [O,D_T]$.
    By {Lemma~\ref{lem: D_t is isomorphic to I_t}}, we know that $[O,D_T]\simeq [O,I_{\operatorname{rank}(T)}]$.
    
    As $\operatorname{rank}(T)=\operatorname{rank}(F)-\operatorname{rank}(E)$, we have $[O,I_{\operatorname{rank}(T)}]={\mathscr I}(M_{\operatorname{rank}(F)-\operatorname{rank}(E)}(\Delta))$ and the proof is completed.
\end{proof}

\section{Constructing Idempotent Matrices}
\label{sec: Constructing Idempotent Matrices}

As we mentioned before that every idempotent matrix $E\in M_n(K)$ can be obtained by $E=ADA^{-1}$ where $D={\rm diag}(1,...,1,0,...,0)$ for some invertible matrix $A$, however, computing the inverse of a matrix is quite tedious.
Also, the above method often yields many identical idempotent matrices.
Hence, we aim to develop new methods for constructing elements of this kind.

\subsection{Idempotents in Matrix Ring over a UFD}
\label{subsec: Idempotents in matrix ring over a UFD}

Let $R$ be an integral domain.
In this section, we want to characterize the idempotent elements in $M_2(R)$.

Let $K$ be the field of fractions of the integral domain $R$.
Then $R$ can be seen as a subring of $K$.
The rank of a matrix $A\in M_2(R)$ is defined to be the rank of a matrix $A\in M_2(K)$.

Let $E$ be an idempotent in $M_2(R)$. Note that $E$ can also be viewed as an element in $M_2(K)$. It is easy to see that if $\operatorname{rank}(E)=0$, then $E=O$.
If $\operatorname{rank}(E)=2$, then $E$ is invertible in $M_2(K)$ and $E^2=E$ and hence $E=I_2$.

\begin{lemma}
    \label{lem: idempotent in M2(integral domain)}
    Let $R$ be an integral domain.
    Let $E\in M_2(R)$ with $\operatorname{rank}(E)=1$.
    Then $E$ is an idempotent if and only if
    \[
    E=\left(\begin{array}{cc}
        a & b \\
        c & 1-a
    \end{array}\right)\ {\textrm with}\ a(1-a)=bc
    \]
    for $a,b,c\in R$.
\end{lemma}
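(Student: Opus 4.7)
The lemma has two directions, and I would handle them separately.

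For sufficiency ($\Leftarrow$), my plan is to perform the $2\times 2$ matrix multiplication $E\cdot E$ directly, taking $d = 1-a$. The (1,2) and (2,1) entries simplify immediately using $a + (1-a) = 1$, while the diagonal entries reduce to $a$ and $1-a$ after substituting $bc = a(1-a)$. This is a routine verification requiring no clever idea.

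For necessity ($\Rightarrow$), write $E = \begin{pmatrix} a & b \\ c & d \end{pmatrix}$ with entries in $R$, and let $K$ denote the field of fractions. The plan is to exploit the field case: viewed in $M_2(K)$, the matrix $E$ is an idempotent of rank $1$, so by Proposition~\ref{diagonalization of idempotent matrix} it is similar to $\operatorname{diag}(1, 0)$. Comparing traces gives $a + d = 1$ as an equation in $K$; since both sides already lie in $R$, this equation holds in $R$ as well, so $d = 1 - a$. Substituting back, the $(1,1)$-entry of $E^2 = E$ reads $a^2 + bc = a$, which rearranges to the required $a(1-a) = bc$.

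The only point needing care is justifying that $a + d = 1$ holds in $R$; this is automatic since an equality in $K$ between two elements of $R$ is an equality in $R$. If one prefers an argument entirely internal to $R$, one can instead examine all four scalar equations from $E^2 = E$: the diagonal entries yield $(a-d)(1-a-d) = 0$, while the off-diagonals yield $b(a+d-1) = 0 = c(a+d-1)$. Combined with the rank condition $ad - bc = 0$ and the absence of zero divisors, a short case analysis eliminates every possibility except $a+d = 1$. Either route is short, so I do not anticipate a real obstacle in the proof.
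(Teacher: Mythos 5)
Your proposal is correct and follows essentially the same route as the paper: both pass to the field of fractions $K$ and reduce the problem to showing $\operatorname{tr}(E)=1$ together with the condition $a(1-a)=bc$ (which is just $\det(E)=0$ once $d=1-a$). The paper gets $\operatorname{tr}(E)=1$ and $\det(E)=0$ by identifying the minimal polynomial of $E$ with the characteristic polynomial $x^2-\operatorname{tr}(E)x+\det(E)=x^2-x$, whereas you get the trace from similarity to $\operatorname{diag}(1,0)$ and the product condition from the $(1,1)$-entry of $E^2=E$; this is a cosmetic difference, and your alternative case analysis internal to $R$ is also sound.
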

\begin{proof}
    View $E$ as an element in $M_2(K)$ where $K$ is the field of fractions of $R$.
    Let $m(x)$ be the minimal polynomial of $E$.
    Then $E^2=E$ if and only if $m(x)\mid x^2-x$.
    If $\deg(m(x))=1$, then $m(x)=x$ (and so $E=O$) or $m(x)=x-1$ (and so $E=I_2$), a contradiction.
    If $\deg(m(x))=2$, then the characteristic polynomial of $E$ is $m(x)=x^2-x$.
    Therefore, $E$ is an idempotent of rank $1$ if and only if $m(x)=x^2-x$.
    Note that $m(x)=x^2-\operatorname{tr}(E)x+\det(E)$.
    We can conclude that $E$ is an idempotent of rank $1$ if and only if ${\rm tr}(E)=1$ and $\det(E)=0$.
\end{proof}

If the ring $R$ is a unique factorization domain (UFD), then we can improve the statement. That is, each idempotent $E\in M_2(R)$ of $\operatorname{rank}(E)=1$ has a better form.

\begin{proposition}
    \label{prop: idempotent in M2(UFD)}
    Let $R$ be a UFD.
    Let $E\in M_2(R)$ with $\operatorname{rank}(E)=1$.
    Then $E$ is an idempotent in $M_2(R)$ if and only if
    \[
    E=\left(\begin{array}{cc}
         sa & sb \\
         ta & tb
    \end{array}\right)
    \]
    where $a,b,s,t\in R$ satisfying $sa+tb=1$ i.e., $\operatorname{tr}(E)=1$.
\end{proposition}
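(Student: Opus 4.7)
The plan is to deduce the result from {Lemma~\ref{lem: idempotent in M2(integral domain)}}, which already pins down the shape of a rank-one idempotent over an integral domain, and then use unique factorization in $R$ to refine that shape into an outer product. The reverse implication I will handle by a one-line computation: writing $E$ as the outer product $\mathbf{u}\mathbf{v}^{\mathsf T}$ with $\mathbf{u}=(s,t)^{\mathsf T}$ and $\mathbf{v}=(a,b)^{\mathsf T}$, associativity of matrix multiplication gives
\[
E^2 = \mathbf{u}(\mathbf{v}^{\mathsf T}\mathbf{u})\mathbf{v}^{\mathsf T} = (sa+tb)\,\mathbf{u}\mathbf{v}^{\mathsf T} = E,
\]
and $E$ has rank exactly one because $\det(E)=0$ while $E\neq O$ (otherwise $sa+tb$ would be $0$, not $1$).

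For the forward direction I will start from the normal form provided by {Lemma~\ref{lem: idempotent in M2(integral domain)}}:
\[
E = \left(\begin{array}{cc} a' & b' \\ c' & 1-a' \end{array}\right), \qquad a'(1-a') = b'c'.
\]
The task is to realize the two rows of $E$ as a common $R$-multiple of a single row vector, and this is exactly the place where the UFD hypothesis is used. Assuming that the first row is nonzero, I will let $s$ be a greatest common divisor of $a'$ and $b'$ in $R$ (well defined up to units in a UFD) and write $a'=sa$, $b'=sb$ with $\gcd(a,b)$ a unit. Substituting into $a'(1-a')=b'c'$ and cancelling $s$ yields $a(1-sa)=bc'$; coprimality of $a$ and $b$ then forces $a\mid c'$, so $c'=ta$ for some $t\in R$, and plugging back in gives $1-sa=tb$, that is, $sa+tb=1$. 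The second row of $E$ is then $(ta,tb)$, which is exactly the asserted form.

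The main obstacle I anticipate is the degenerate cases where some factor above vanishes, since cancellation in an integral domain only applies to nonzero elements. If the first row of $E$ is zero, then $a'=b'=0$ forces $1-a'=1$, so $E$ has first row $(0,0)$ and second row $(c',1)$, fitting the asserted form with $s=0$, $t=1$, $a=c'$, $b=1$. If instead $a=0$ (so $b$ must be a unit by coprimality), the relation $a'(1-a')=b'c'$ immediately yields $c'=0$, and the factorization is again immediate. These boundary cases require explicit attention, but they present no real difficulty beyond bookkeeping, so the UFD step above is genuinely the crux of the argument.
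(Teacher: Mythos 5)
Your proof is correct and follows essentially the same route as the paper: both directions reduce to Lemma~\ref{lem: idempotent in M2(integral domain)}, and the forward direction then uses a UFD coprimality--divisibility argument to split $E$ into an outer product. The only difference is mechanical: the paper writes the ratio of the two (parallel) rows as a reduced fraction $t/s$ over the field of fractions and deduces that $s$ divides both first-row entries, whereas you extract $s=\gcd(a',b')$ inside $R$ and let the determinant identity $a'(1-a')=b'c'$ produce $t$; your degenerate cases ($a'=b'=0$ and $a=0$) correctly cover what the paper handles via its two explicit boundary matrices.
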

\begin{proof}
    ($\Rightarrow$)
    Let $K$ be the field of fractions of $R$.
    Because we can view $E\in M_2(K)$ and $\operatorname{rank}(E)=1$, we know that two row vectors of matrix $E$ are parallel. Thus, by {Lemma~\ref{lem: idempotent in M2(integral domain)}}, either
    \[
    E=\left(\begin{array}{cc}
         0  & 0 \\
         x & 1
    \end{array}\right),\ {\rm or}\ 
    E=\left(\begin{array}{cc}
         c  & d \\
         kc & kd
    \end{array}\right)
    \]
    for some $k\in K$ and $x,c,d,kc,kd\in R$ with $(c,d)\neq (0,0)$ and $kd=1-c$.
    It is easy to see that the former matrix satisfies our conclusion. We only need to discuss the later matrix.
        
    If $k=0$, then $0=kd=1-c$ and so $c=1$. Thus,
    \[
    E=\left(\begin{array}{cc}
         1  & d \\
         0 & 0
    \end{array}\right)
    \]
    satisfies our conclusion.
    
    If $k\neq 0$, we can write $k=t/s$ with $t,s\in R\setminus\{0\}$ and $\gcd(t,s)=1$.
    Since $tc/s=kc$ and $td/s=kd\in R$, we have $s\mid tc$ and $s\mid td$.
    Because $R$ is a UFD, we know that every irreducible element is a prime element. 
    Write $s$ into a product of irreducible elements.
    Since $s\mid tc$ and $s\mid td$, we can conclude that $s\mid c$ and $s\mid d$.
    Thus, $c=sa$ and $d=sb$ for some $a,b\in R$.
    Hence, we obtain that
    \[
    E=\left(\begin{array}{cc}
         sa  & sb \\
         ta & tb
    \end{array}\right)
    \]
    and $sa+tb=c+kd=c+(1-c)=1$.

    ($\Leftarrow$) By direct computation.
\end{proof}

\begin{example}
    Let $R={\mathbb Z}[\sqrt{5}{\bf i}]$ where ${\bf i}=\sqrt{-1}$. Then $R$ is not a UFD since $6$ can be factorized as
    \[
    2\cdot 3=6=(1+\sqrt{5}{\bf i})\cdot(1-\sqrt{5}{\bf i})
    \]
    and $2,3,1+\sqrt{5}{\bf i},1-\sqrt{5}{\bf i}$ are all irreducible elements in ${\mathbb Z}[\sqrt{5}{\bf i}]$.
    By taking $a=-2$ in {Lemma~\ref{lem: idempotent in M2(integral domain)}}, we find an idempotent
    \[
    E=\left(\begin{array}{cc}
        -2 & 1+\sqrt{5}{\bf i} \\
        -1+\sqrt{5}{\bf i} & 3
    \end{array}\right)\in M_2({\mathbb Z}[\sqrt{5}{\bf i}]).
    \]
    However, $E$ cannot be written as the form in {Proposition~\ref{prop: idempotent in M2(UFD)}}.
    Suppose that this is not the case i.e.,
    \[
    \left(\begin{array}{cc}
        -2 & 1+\sqrt{5}{\bf i} \\
        -1+\sqrt{5}{\bf i} & 3
    \end{array}\right)=
    \left(\begin{array}{cc}
        sa & sb \\
        ta & tb
    \end{array}\right).
    \]
    Then $-2=sa, 1+\sqrt{5}{\bf i}=sb, -1+{\sqrt 5}{\bf i}=ta, 3=tb$.
    Note that ${\mathbb Z}[{\sqrt 5}{\bf i}]^{\times}=\{1,-1\}$.
    Because $-2$ is an irreducible element in ${\mathbb Z}[\sqrt{5}{\bf i}]$, we know that one of $s$ or $a$ must be unit.
    If $s$ is a unit, then either $(s,a)=(1,-2)$ or $(s,a)=(-1,2)$.
    Since $-1+\sqrt{5}{\bf i}=ta$, we have $2\mid -1+\sqrt{5}{\bf i}$, a contradiction.
    If $a$ is a unit, then either $(s,a)=(2,-1)$ or $(s,a)=(-2,1)$.
    Since $1+\sqrt{5}{\bf i}=sb$, we have $2\mid 1+\sqrt{5}{\bf i}$, a contradiction.
    \hfill $\square$
\end{example}

In fact, the form in {Proposition~\ref{prop: idempotent in M2(UFD)}} still holds for idempotents in $E\in M_n(R)$ of $\operatorname{rank}(E)=1$.
We write $A_{(i,j)}$ to be the matrix formed by deleting the $i$th row and $j$th column of $A$.

\begin{corollary}
    \label{cor: idempotent in Mn(UFD) of rank 1}
    Let $R$ be a UFD. Let $E\in M_n(R)$ with $\operatorname{rank}(E)=1$. Then $E$ is an idempotent in $M_n(R)$ if and only if
    \[
    E=\left(\begin{array}{cccc}
         s_1a_1&s_1a_2&\cdots&s_1a_n  \\
         s_2a_1&s_2a_2&\cdots&s_2a_n  \\
         \vdots&\vdots&\ddots&\vdots  \\
         s_na_1&s_na_2&\cdots&s_na_n
    \end{array}\right)
    \]
    where $s_i,a_j\in R$ satisfying $\sum_{i=1}^{n}s_ia_i=1$ i.e., ${\rm tr}(E)=1$.
\end{corollary}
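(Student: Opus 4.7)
The plan is to generalize the $2 \times 2$ argument from Proposition~\ref{prop: idempotent in M2(UFD)}, but instead of going through the minimal polynomial, I would argue directly by clearing denominators, since the trace-$1$ condition is the only part that survives to $n \times n$. The guiding idea is that a rank-one matrix factors as an outer product $\mathbf{s}\mathbf{a}^{\mathsf T}$, and for such a matrix the identity $E^2 = E$ collapses to the single scalar condition $\mathbf{a}^{\mathsf T}\mathbf{s} = 1$, which is precisely $\operatorname{tr}(E) = 1$. So the whole task is to show that when $E \in M_n(R)$ has rank $1$, one can choose the two factor vectors to lie in $R^n$ rather than merely in $K^n$.

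For the forward direction, let $K$ be the field of fractions of $R$. Since $\operatorname{rank}(E) = 1$ over $K$, pick a nonzero row, call it $\mathbf{r} = (r_1, \dots, r_n) \in R^n$, and let $d = \gcd(r_1, \dots, r_n) \in R$; set $a_j = r_j/d$, so that $a_1, \dots, a_n \in R$ have no common non-unit factor. Every other row of $E$ is a $K$-multiple of $\mathbf{r}$, so the $i$-th row has the form $(s_i a_1, \dots, s_i a_n)$ with $s_i \in K$ (and $s_{i_0} = d$ for the chosen row). The key step is to show $s_i \in R$: write $s_i = p/q$ in lowest terms in the UFD $R$ (so $\gcd(p,q)$ is a unit). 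Since $s_i a_j = pa_j/q \in R$ for every $j$, one gets $q \mid pa_j$, hence $q \mid a_j$ for every $j$ by coprimality of $p$ and $q$, and therefore $q$ divides $\gcd(a_1,\dots,a_n)$, which is a unit. Thus $s_i \in R$ and $E_{ij} = s_i a_j$ throughout. Now idempotence gives
\[
s_i a_j \;=\; E_{ij} \;=\; (E^2)_{ij} \;=\; \sum_{k=1}^n s_i a_k s_k a_j \;=\; s_i a_j \Bigl(\sum_{k=1}^n s_k a_k\Bigr),
\]
and since $E \neq O$ some entry $s_i a_j$ is nonzero, so cancellation in the integral domain $R$ forces $\sum_{k=1}^n s_k a_k = 1$, which is exactly $\operatorname{tr}(E) = 1$.

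The backward direction is a direct computation: if $E_{ij} = s_i a_j$ and $\sum_k s_k a_k = 1$, then $(E^2)_{ij} = s_i a_j \sum_k s_k a_k = s_i a_j = E_{ij}$, so $E^2 = E$, and since $\operatorname{tr}(E) = 1 \neq 0$ the matrix is nonzero, hence of rank exactly $1$ (its image is spanned by $(s_1, \dots, s_n)^{\mathsf T}$).

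The only real obstacle is the UFD-specific denominator-clearing step; everything else is bookkeeping about outer products. This step uses precisely the two facts distinctive to UFDs, namely existence of greatest common divisors and the Gauss-style implication $q \mid pa$ with $\gcd(p,q) = 1$ forces $q \mid a$. Over a mere integral domain this collapses, which is consistent with the earlier counterexample in $\mathbb{Z}[\sqrt{5}\,\mathbf{i}]$ after Proposition~\ref{prop: idempotent in M2(UFD)}.
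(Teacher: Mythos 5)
Your proof is correct, and it takes a noticeably more direct route than the paper's. The paper proves the forward direction by induction on $n$ with the $2\times 2$ case as base, splitting into two cases according to whether $E$ has a zero row: the zero-row case is handled by passing to the minor $E_{(\ell,\ell)}$ and invoking the inductive hypothesis, while the all-rows-nonzero case is essentially your outer-product argument normalized against the first row; the trace condition is then read off from the characteristic polynomial of $E$ over $K$. You avoid both the induction and the case split by normalizing a single nonzero row by the gcd of its entries, so that the resulting vector $(a_1,\dots,a_n)$ is primitive; a zero row then simply corresponds to $s_i=0$ and needs no separate treatment, and the primitivity of $\mathbf{a}$ is exactly what forces each denominator $q$ to be a unit. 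The denominator-clearing step itself is the same UFD argument the paper uses in Proposition~\ref{prop: idempotent in M2(UFD)} ($q\mid pa_j$ with $\gcd(p,q)$ a unit forces $q\mid a_j$), so the two proofs rest on the same algebraic input; what your version buys is a shorter, uniform argument, and deriving $\operatorname{tr}(E)=1$ directly from the scalar identity $E^2=\bigl(\sum_k s_k a_k\bigr)E$ rather than from the characteristic polynomial.
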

\begin{proof}
    ($\Rightarrow$) We use induction on $n$. The case $n=2$ is proven in {Proposition~\ref{prop: idempotent in M2(UFD)}}.
    Let $E\in M_n(R)$ be an idempotent. Suppose $E$ has a zero row (say the $\ell$th row for some $1\le \ell\le n$).
    Because $E^2=E$, we have $(E_{(\ell,\ell)})^2=E_{(\ell,\ell)}$.
    To see this, write $E=(e_{ij})$ and $E_{(\ell,\ell)}=(e_{ij}^{(\ell)})$. Then
    \begin{enumerate}[(a)]
        \item If $1\le i,j< \ell\le n-1$, then $e_{ij}^{(\ell)}=e_{ij}$.
        \item If $1\le i< \ell\le j\le n-1$, then $e_{ij}^{(\ell)}=e_{i(j+1)}$
        \item If $1\le j< \ell\le i\le n-1$, then $e_{ij}^{(\ell)}=e_{(i+1)j}$.
        \item If $1\le \ell\le i,j\le n-1$, then $e_{ij}^{(\ell)}=e_{(i+1)(j+1)}$.
    \end{enumerate}
    Since $E=E^2$ and $e_{\ell j}=0$ for all $j$, we know that
    \[
    e_{ij}=\sum_{k=1}^{n}e_{ik}e_{kj}=\sum_{k=1}^{\ell-1}e_{ik}e_{kj}+\sum_{k=\ell+1}^{n}e_{ik}e_{kj}.
    \]
    Thus, by (a) to (d), we have
    \[
    e_{ij}^{(\ell)}=\sum_{k=1}^{n-1}e_{ik}^{(\ell)}e_{kj}^{(\ell)}.
    \]
    Therefore, $E_{(\ell,\ell)}=(E_{(\ell,\ell)})^2$. 
    Because $E_{(\ell,\ell)}$ is an $(n-1)\times (n-1)$ idempotent matrix, by induction hypothesis we may write
    \[
    e_{ij}^{(\ell)}=\left\{\begin{array}{ll}
         s_ia_i& {\textrm {if}}\ 1\le i,j< \ell\le n-1, \\
         s_ia_{j+1}& {\textrm {if}}\ 1\le i< \ell\le j\le n-1, \\
         s_{i+1}a_j& {\textrm {if}}\ 1\le j< \ell\le i\le n-1, \\
         s_{i+1}a_{j+1}&{\textrm {if}}\ 1\le \ell\le i,j\le n-1,
    \end{array}\right.
    \]
    where $\sum_{k=1}^{\ell-1}s_ka_k+\sum_{k=\ell}^{n-1}s_{k+1}a_{k+1}=1$ and $s_i,a_i\in R$ with $i\neq \ell$.
    Let ${\bf e}_\ell$ be the $\ell$th column of $E$.
    Because $\operatorname{rank}(E)=1$, ${\bf e}_\ell$ must be of the form
    \[
    {\bf e}_\ell=(s_1a_\ell,s_2a_\ell,...,s_na_\ell)^{\mathsf T}\in R^n,
    \]
    where $a_\ell\in K$. We need to show that $a_\ell\in R$.
    From $\sum_{k=1}^{\ell-1}s_ka_k+\sum_{k=\ell}^{n-1}s_{k+1}a_{k+1}=1$, we know that there exists an index $1\le k\le n$ and $k\neq \ell$ such that $s_k\neq 0$.
    Since $s_k\mid s_ka_\ell\in R$, $s_ka_\ell=s_kt$ for some $t\in R$.
    Because $R$ is an integral domain, we know that $a_\ell=t\in R$.
    On the other hand, there is an index $1\le m\le n$ such that $a_m\neq 0$.
    Since $0=e_{\ell m}=s_\ell a_m$, we can conclude that $s_{\ell}=0$.
    Thus, $\sum_{k=1}^{n}s_ka_k=1$ and so $E$ can be written in the form we wanted.

    Now, suppose every row of $E$ is nonzero. Because $\operatorname{rank}(E)=1$, we can write
    \[
    E=\left(\begin{array}{cccc}
         c_1&c_2&\cdots&c_n  \\
         k_2c_1&k_2c_2&\cdots&k_2c_n  \\
         \vdots&\vdots&\ddots&\vdots  \\
         k_nc_1&k_nc_2&\cdots&k_nc_n
    \end{array}\right),
    \]
    where $c_1,...,c_n\in R$ and $k_2,...,k_n\in K$.
    Write $k_i=s_i/t_i$ for some $s_i,t_i\in R$ and $\gcd(s_i,t_i)=1$ ($k_1:=1$ and $s_1=t_1:=1$).
    Then $s_ic_j/t_i=k_ic_j\in R$ for all $i,j$ and so $t_i\mid s_ic_j$.
    Because $R$ is a UFD, by the same argument in {Proposition~\ref{prop: idempotent in M2(UFD)}}, we know that $t_i\mid c_j$. Thus, $c_j=t_ia_j$ for some $a_j\in R$.
    Therefore, $E$ has the form
    \[
    E=\left(\begin{array}{cccc}
         s_1a_1&s_1a_2&\cdots&s_1a_n  \\
         s_2a_1&s_2a_2&\cdots&s_2a_n  \\
         \vdots&\vdots&\ddots&\vdots  \\
         s_na_1&s_na_2&\cdots&s_na_n
    \end{array}\right).
    \]
    Because $E$ is an idempotent matrix of rank $1$, we know that ${\rm tr}(E)=1$ (view $E\in M_n(K)$ and the characteristic polynomial is $x^n-x^{n-1}$).
    Hence, we can conclude the result.

    ($\Leftarrow$) By direct computation.
\end{proof}

\begin{corollary}
    \label{cor: idempotent in M3(UFD)}
    Let $R$ be a UFD. Let $E\in M_3(R)$. Then 
    \begin{enumerate}[(a)]
        \item $E$ is an idempotent with $\operatorname{rank}(E)=1$ if and only if
        \[
        E=\left(\begin{array}{ccc}
             sa&sb&sc  \\
             ta&tb&tc  \\
             ua&ub&uc
        \end{array}\right)
        \]
        where $a,b,c,s,t,u\in R$ satisfying $sa+tb+uc=1$.
        \item $E$ is an idempotent with $\operatorname{rank}(E)=2$ if and only if
        \[
        E=\left(\begin{array}{rrr}
             1-sa&-sb&-sc  \\
             -ta&1-tb&-tc  \\
             -ua&-ub&1-uc
        \end{array}\right)
        \]
        where $a,b,c,s,t,u\in R$ satisfying $sa+tb+uc=1$.
    \end{enumerate}
\end{corollary}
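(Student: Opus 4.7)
Part (a) is just the specialization of Corollary~\ref{cor: idempotent in Mn(UFD) of rank 1} to $n=3$, since that corollary already characterizes rank-$1$ idempotents in $M_n(R)$ over any UFD as outer products of a column and a row vector whose inner product equals $1$. So there is nothing new to do for (a) beyond renaming the entries $a_1,a_2,a_3$ and $s_1,s_2,s_3$ as $a,b,c$ and $s,t,u$.

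For part (b), my plan is to reduce to part (a) via the complementation map $E \mapsto I_3 - E$. The two facts I want to exploit are: (i) $E$ is idempotent if and only if $I_3-E$ is idempotent, by the direct computation $(I_3-E)^2 = I_3 - 2E + E^2 = I_3 - E$; and (ii) if $E \in M_3(R)$ is an idempotent, then, viewing $E \in M_3(K)$ over the field of fractions $K$ and applying Proposition~\ref{diagonalization of idempotent matrix}, we have $\operatorname{rank}(E) + \operatorname{rank}(I_3 - E) = 3$, because conjugating by the same $A$ sends $I_3 - E$ to the complementary diagonal idempotent $I_3 - D_E$. Consequently the correspondence $E \leftrightarrow I_3 - E$ restricts to a bijection between rank-$2$ idempotents in $M_3(R)$ and rank-$1$ idempotents in $M_3(R)$.

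With these two facts in hand, the argument is immediate. If $E$ is an idempotent of rank $2$, then $I_3 - E$ is an idempotent of rank $1$, so by part (a) we can write
\[
I_3 - E = \left(\begin{array}{ccc}
     sa & sb & sc  \\
     ta & tb & tc  \\
     ua & ub & uc
\end{array}\right)
\]
for some $a,b,c,s,t,u \in R$ with $sa + tb + uc = 1$, and then subtracting from $I_3$ yields exactly the form stated in (b). Conversely, given a matrix $E$ of the form in (b), the matrix $I_3 - E$ has the form in (a) with $sa+tb+uc=1$, so by part (a) it is an idempotent of rank $1$; hence $E = I_3 - (I_3 - E)$ is idempotent of rank $3 - 1 = 2$.

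The main (essentially only) obstacle is point (ii) above, the rank complementarity $\operatorname{rank}(E) + \operatorname{rank}(I_3 - E) = 3$. This is not a subtle issue but does need to be mentioned, because the statement of (b) specifies rank $2$; without this one would only conclude that $E$ is an idempotent, not that it has the prescribed rank. Everything else is either a direct appeal to Corollary~\ref{cor: idempotent in Mn(UFD) of rank 1} or a trivial matrix subtraction.
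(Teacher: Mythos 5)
Your proposal is correct and follows essentially the same route as the paper: part (a) is the $n=3$ case of Corollary~\ref{cor: idempotent in Mn(UFD) of rank 1}, and part (b) is obtained by applying that result to $I_3-E$, with the rank complementarity $\operatorname{rank}(E)+\operatorname{rank}(I_3-E)=3$ justified via Proposition~\ref{diagonalization of idempotent matrix} over the field of fractions. Your write-up is in fact slightly more careful than the paper's, since you make the rank bookkeeping and both directions of the equivalence explicit.
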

\begin{proof}
    (a) is immediately follows from {Corollary~\ref{cor: idempotent in Mn(UFD) of rank 1}}.
    For (b) with $\operatorname{rank}(E)=2$, we can derive that $I_3-E$ is an idempotent matrix of rank $1$ by {Proposition~\ref{diagonalization of idempotent matrix}}. Note that we set $\Delta$ to be the ring of fraction of $R$ in {Proposition~\ref{diagonalization of idempotent matrix}}.
\end{proof}

\subsection{Idempotents in Matrix Ring over a PID}
\label{subsec: Idempotents in matrix ring over a PID}

When $R$ is a principal ideal domain (PID), we have other tools to deal with the problem more deeply.
Let $A\in M_n(R)$. Then there exist invertible matrices $P,Q$ such that
\[
PAQ=\left(\begin{array}{ccccccc}
     d_1&&&&&&  \\
     &d_2&&&&&  \\
     &&\ddots&&&&  \\
     &&&d_\ell&&&  \\
     &&&&0&&  \\
     &&&&&\ddots&  \\
     &&&&&&0
\end{array}\right)
\]
where $d_i\mid d_{i+1}$. The diagonal matrix is called the {\bf Smith normal form of} $A$\index{Smith normal form} where $d_i$ is unique up to a unit in $R$.
For the Smith normal form, we take \cite[Section~III.8 to Section~III.10]{LecturesinAbstractAlgebraLinearAlgebra} as a reference.

\begin{definition}
    Let $L$ be a field and $V$ be an $n$-dimensional vector space over $L$.
    Let $\beta=\{{\bf v}_1,...,{\bf v}_n\}$ be a basis for $V$.
    Let $R$ be a ring contained in $L$.
    An $R$-{\bf lattice} $\Lambda\subseteq V$ is an $R$-module of $V$ (as the usual scalar multiplication in $V$) of the form
    \[
    \Lambda=R{\bf v}_1+\cdots+R{\bf v}_n=\left\{\sum_{i=1}^{n}r_i{\bf v}_i\ \middle|\  r_i\in R\right\}.
    \]
    When $V=L^n$ with ${\bf v}_i={\bf e}_i$, we write $\Lambda$ to be $\Lambda_0$ i.e., $\Lambda_0=R^n$.
\end{definition}

\begin{lemma}
    \label{lem: lattice quotient is isomorphic when the denominator is act by an invertible matrix}
    Let $R$ be an integral domain and $\Lambda$ be an $R$-lattice with $\Lambda\subseteq \Lambda_0$.
    Let $P\in M_n(R)^{\times}$. Define
    \[
    P\Lambda:=\{P{\bf u}\mid {\bf u}\in \Lambda\}.
    \]
    Then we have
    \[
    \Lambda_0/P\Lambda\simeq \Lambda_0/\Lambda
    \]
    as $R$-modules.
\end{lemma}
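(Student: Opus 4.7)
The plan is to build the isomorphism directly from the multiplication-by-$P$ map, using that $P\in M_n(R)^\times$ has an inverse $P^{-1}\in M_n(R)$, so multiplication by $P$ is an $R$-module automorphism of $\Lambda_0=R^n$.

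First, I would define $\psi:\Lambda_0\to \Lambda_0/P\Lambda$ by $\psi({\bf v})=P{\bf v}+P\Lambda$. Since matrix multiplication is $R$-linear, $\psi$ is an $R$-module homomorphism. To see that $\psi$ is surjective, take any coset ${\bf w}+P\Lambda\in \Lambda_0/P\Lambda$; because $P^{-1}\in M_n(R)$, the vector ${\bf v}:=P^{-1}{\bf w}$ lies in $\Lambda_0$, and $\psi({\bf v})=P(P^{-1}{\bf w})+P\Lambda={\bf w}+P\Lambda$.

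Next, I would compute the kernel. By definition, $\psi({\bf v})=0$ means $P{\bf v}\in P\Lambda$, i.e., $P{\bf v}=P{\bf u}$ for some ${\bf u}\in \Lambda$. Multiplying on the left by $P^{-1}\in M_n(R)$ gives ${\bf v}={\bf u}\in \Lambda$. Conversely, any ${\bf v}\in\Lambda$ clearly satisfies $P{\bf v}\in P\Lambda$, so $\ker\psi=\Lambda$. Applying the first isomorphism theorem for $R$-modules yields
\[
\Lambda_0/\Lambda\ \simeq\ \Lambda_0/P\Lambda,
\]
as desired.

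There is no serious obstacle here; the only point that genuinely uses the hypothesis $P\in M_n(R)^\times$ (rather than merely $P$ invertible over the fraction field) is the pair of assertions $P^{-1}{\bf w}\in \Lambda_0$ for ${\bf w}\in\Lambda_0$ (surjectivity) and $P{\bf v}\in P\Lambda\Rightarrow {\bf v}\in\Lambda$ (injectivity after passing to the quotient), both of which require that $P^{-1}$ have entries in $R$. I would make sure to flag this explicitly, since without it the map $\psi$ would still be well defined but could fail to be surjective or to have kernel exactly $\Lambda$.
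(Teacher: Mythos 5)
Your proposal is correct and follows essentially the same route as the paper: both define $\psi({\bf v})=P{\bf v}+P\Lambda$, verify surjectivity via $P^{-1}{\bf w}\in\Lambda_0$, identify $\ker\psi=\Lambda$, and conclude by the first isomorphism theorem. Your explicit remark about where $P\in M_n(R)^{\times}$ (as opposed to invertibility over the fraction field) is actually used is a welcome clarification, but the argument itself matches the paper's.
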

\begin{proof}
    Define $\psi:\Lambda_0\to \Lambda_0/P\Lambda$ by $\psi({\bf v})=P{\bf v}+P\Lambda$.
    Then $\psi$ is an $R$-module homomorphism.
    Let ${\bf u}+P\Lambda\in \Lambda_0/P\Lambda$.
    Because $P$ is invertible, $P^{-1}{\bf u}\in \Lambda_0$. Thus, $\psi(P^{-1}{\bf u})=PP^{-1}{\bf u}+P\Lambda={\bf u}+P\Lambda$ and so $\psi$ is an onto map.
    Clearly, by the definition of $\psi$, $\Lambda$ is contained in ${\rm Ker}(\psi)$.
    Let ${\bf u}\in {\rm Ker}(\psi)$. Then $\psi({\bf u})= P\Lambda$ and so $P{\bf u}\in P\Lambda$.
    Thus, $P{\bf u}=P{\bf w}$ for some ${\bf w}\in \Lambda$.
    Since $P$ is invertible, we can conclude that ${\bf u}={\bf w}\in \Lambda$ and so ${\rm Ker}(\psi)\subseteq \Lambda$.
    Hence, ${\rm Ker}(\psi)=\Lambda$ and so $\Lambda_0/\Lambda\simeq\Lambda_0/P\Lambda$.
\end{proof}

Applying {Lemma~\ref{lem: lattice quotient is isomorphic when the denominator is act by an invertible matrix}} to the beginning matrix $A$, we can conclude that
\[
R^n/AR^n=\Lambda_0/P^{-1}DQ^{-1}\Lambda_0=\Lambda_0/P^{-1}(D\Lambda_0)\simeq \Lambda_0/D\Lambda_0= R^n/DR^n
\]
where $D={\rm diag}(d_1,...,d_\ell,0,...,0)$, $d_i \neq 0$, since $D=PAQ$ and $P,Q\in M_n(R)^{\times}$.
Because $DR^n=\{D{\bf v}\mid {\bf v}\in R^n\}=\left\{\sum_{i=1}^{\ell}r_id_i{\bf e}_i\ \middle|\ r_i\in R\right\}=d_1R\oplus\cdots\oplus d_\ell R\oplus (0)\oplus\cdots\oplus (0)$, we have
\[
R^n/DR^n\simeq R/d_1R\oplus \cdots \oplus R/d_\ell R\oplus R/(0)\oplus\cdots\oplus R/(0).
\]
Thus, $R^n/AR^n\simeq R/d_1R\oplus \cdots \oplus R/d_\ell R\oplus R^{n-\ell}$.

%Assuming $A$ to be an idempotent, then $R^n={\rm Im}(A)\dot+{\rm Ker}(A)$. Thus, we have
%\[
%R^n/{\rm Im}(A)=({\rm Im}(A)\dot+{\rm Ker}(A))/{\rm Im}(A)\simeq {\rm Ker}(A).
%\]
%Because $R$ is a PID and ${\rm Ker}(A)$ is an $R$-submodule of free $R$-module $R^n$, we know that the rank of ${\rm Ker}(A)$ is $m$ with $m\le n$ and so ${\rm Ker}(A)\simeq R^m$.
Assume that $A$ is an idempotent and $R$ is a PID. Consider the endomorphism of $R^n$ by multiplying $A$ on elements of $R^n$. Then $R^n$ is a direst sum of ${\rm Im}(A)$ and ${\rm Ker}(A)$, each of which is a free submodule. In particular, $R^n / A R^n = R^n / {\rm Im}(A) \simeq {\rm Ker}(A)$ is free, and thus torsion-free. Thus, each $d_i$ must be a unit.
\begin{comment}
Since $AR^n={\rm Im}(A)$, there is an $R$-module isomorphism
\[
\eta:R/d_1R\oplus\cdots\oplus R/d_\ell R\oplus R^{n-\ell}\overset{\simeq}{\longrightarrow} R^m.
\]
{\color{red}We claim that $d_1$ must be a unit.
Suppose $d_1$ is not a unit. Consider the element 
\[
{\bf e}_1=(\overline{1},\overline{0},...,\overline{0} ,\underbrace{0,...,0}_{n-\ell})^{\mathsf T}\in R/d_1R\oplus\cdots\oplus R/d_\ell R\oplus R^{n-\ell}.
\]
Because ${\bf e}_1$ is nonzero and $\eta$ is an isomorphism, $\eta({\bf e}_1)$ cannot be zero. Because $R$ is an integral domain, we know that $r{\bf x}\neq {\bf 0}$ for all $r\in R\setminus\{0\}$ and for all ${\bf x}\in R^m\setminus\{{\bf 0}\}$.
Then we have
\[
{\bf 0}\neq d_1\eta({\bf e}_1)=\eta(d_1{\bf e}_1)=\eta((\overline{d_1},\overline{0},...,\overline{0},0,...,0)^{\mathsf T})={\bf 0}
\]
since $\overline{d_1}=\overline{0}$ in $R/d_1R$.
Then there is a contradiction.
Similarly, $d_2,...,d_\ell$ are all units.
}
{\color{blue}(S) Could be shorter: \\ 
We claim that $d_1$ must be a unit. Consider the element 
\[
{\bf e}_1=(\overline{1},\overline{0},...,\overline{0} ,\underbrace{0,...,0}_{n-\ell})^{\mathsf T}\in R/d_1R\oplus\cdots\oplus R/d_\ell R\oplus R^{n-\ell}.
\]
If $d_1$ is not a unit, then ${\bf e}_1$ is nonzero and so is $\eta({\bf e}_1)$. Because $R$ is a domain, one has $d_1\eta({\bf e}_1) \neq {\bf 0}$. However, $d_1\eta({\bf e}_1)=\eta(d_1{\bf e}_1)={\bf 0}$, a contradiction. Similarly, $d_2,...,d_\ell$ are all units.
}
\end{comment}
Therefore, the Smith normal form of $A$ can be adjusted to the following
\[
PAQ=\left(\begin{array}{ccccccc}
     1&&&&&&  \\
     &1&&&&&  \\
     &&\ddots&&&&  \\
     &&&1&&&  \\
     &&&&0&&  \\
     &&&&&\ddots&  \\
     &&&&&&0
\end{array}\right).
\]
By writing $S=P^{-1}$ and $T=Q^{-1}$, we have
\[
A=S\left(\begin{array}{c|c}
    I_\ell & O \\
    \hline
    O & O
\end{array}\right)T.
\]
Because $A^2=A$, we have
\[
\left(\begin{array}{c|c}
    I_\ell & O \\
    \hline
    O & O
\end{array}\right)TS\left(\begin{array}{c|c}
    I_\ell & O \\
    \hline
    O & O
\end{array}\right)=
\left(\begin{array}{c|c}
    I_\ell & O \\
    \hline
    O & O
\end{array}\right)
\]
and so
\[
\left(\begin{array}{ccc}
    \rotvert &{}_1{\bf t}& \rotvert \\
     &\vdots&  \\
    \rotvert &{}_\ell{\bf t}& \rotvert \\
    \rotvert &{\bf 0}^{\mathsf T}& \rotvert \\
     &\vdots&  \\
    \rotvert &{\bf 0}^{\mathsf T}& \rotvert 
\end{array}\right)
\left(\begin{array}{cccccc}
     \vert&&\vert&\vert&&\vert  \\
     {\bf s}_1&\cdots&{\bf s}_\ell&{\bf 0}&\cdots&{\bf 0}  \\
     \vert&&\vert&\vert&&\vert 
\end{array}\right)=
\left(\begin{array}{c|c}
    I_\ell & O \\
    \hline
    O & O
\end{array}\right)
\]
where ${}_i{\bf t}$ is the $i$th row of the matrix $T$ and ${\bf s}_i$ is the $i$th column of the matrix $S$ for $1\le i\le \ell$.

With the above discussion and replacing $A$ by $E$ as our convention, we can conclude the following result.

\begin{theorem}
    \label{thm: the form of idempotent matrix with invertible S T}
    Let $R$ be a PID and $E\in M_n(R)$. Then $E$ is an idempotent of rank $\ell$ if and only if there exist $S,T\in M_n(R)^{\times}$ such that
    \[
\left(\begin{array}{ccc}
    \rotvert &{}_1{\bf t}& \rotvert \\
     &\vdots&  \\
    \rotvert &{}_\ell{\bf t}& \rotvert \\
    \rotvert &{\bf 0}^{\mathsf T}& \rotvert \\
     &\vdots&  \\
    \rotvert &{\bf 0}^{\mathsf T}& \rotvert 
\end{array}\right)
\left(\begin{array}{cccccc}
     \vert&&\vert&\vert&&\vert  \\
     {\bf s}_1&\cdots&{\bf s}_\ell&{\bf 0}&\cdots&{\bf 0}  \\
     \vert&&\vert&\vert&&\vert 
\end{array}\right)=
\left(\begin{array}{c|c}
    I_\ell & O \\
    \hline
    O & O
\end{array}\right)
\]
and
\[
E=S\left(\begin{array}{c|c}
    I_\ell & O \\
    \hline
    O & O
\end{array}\right)T.
\]
\end{theorem}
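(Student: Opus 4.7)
The plan is to synthesize the discussion immediately preceding the statement. For the forward direction, suppose $E\in M_n(R)$ is an idempotent of rank $\ell$. I would apply the Smith normal form theorem to obtain invertible $P,Q\in M_n(R)^\times$ with
\[
PEQ=\operatorname{diag}(d_1,\ldots,d_\ell,0,\ldots,0),
\]
the nonzero entries satisfying $d_i\mid d_{i+1}$. The crucial step is to verify that every $d_i$ is a unit. This rests on a module-theoretic observation: the argument of Proposition~\ref{diagonalization of idempotent matrix} is purely ring-theoretic, so $E^2=E$ still yields $R^n=\operatorname{Im}(E)\dot+\operatorname{Ker}(E)$ as $R$-modules. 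Since $R$ is a PID and both summands are submodules of the free module $R^n$, both are themselves free. Hence the cokernel $R^n/ER^n\cong \operatorname{Ker}(E)$ is free, and in particular torsion-free. Combining this with the isomorphism $R^n/ER^n\cong R/d_1R\oplus\cdots\oplus R/d_\ell R\oplus R^{n-\ell}$ derived via Lemma~\ref{lem: lattice quotient is isomorphic when the denominator is act by an invertible matrix}, every $d_i$ must be a unit.

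Once the $d_i$ are units, they can be absorbed into $P$ to obtain the normalized Smith form $PEQ=D_\ell$, where $D_\ell$ denotes the block matrix with $I_\ell$ in the upper left and zeros elsewhere. Setting $S=P^{-1}$ and $T=Q^{-1}$ gives the desired factorization $E=SD_\ell T$. The idempotency condition $E^2=E$ expands to $SD_\ell TSD_\ell T=SD_\ell T$, and cancelling the invertible $S$ on the left and $T$ on the right yields
\[
D_\ell TSD_\ell=D_\ell.
\]
Noting that $D_\ell T$ is the matrix whose first $\ell$ rows coincide with those of $T$ (with zero rows below) and that $SD_\ell$ is the matrix whose first $\ell$ columns coincide with those of $S$ (with zero columns to the right), this identity is exactly the block product displayed in the statement.

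For the converse, given invertible $S,T$ satisfying the displayed block equation (equivalently $D_\ell TSD_\ell=D_\ell$) together with $E=SD_\ell T$, a direct calculation yields
\[
E^2=SD_\ell TSD_\ell T=S(D_\ell TSD_\ell)T=SD_\ell T=E,
\]
so $E$ is idempotent. Its rank equals $\ell$ because $S$ and $T$ remain invertible over the fraction field of $R$, preserving $\operatorname{rank}(D_\ell)=\ell$.

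The main obstacle will be the step in the forward direction where idempotency is used to force each invariant factor to be a unit. This is where the hypothesis $E^2=E$ enters decisively, through the splitting $R^n=\operatorname{Im}(E)\dot+\operatorname{Ker}(E)$ into free direct summands over the PID, without which the Smith form alone would not collapse the diagonal to ones and zeros. Everything else is routine block-matrix manipulation combined with the Smith normal form machinery already in place.
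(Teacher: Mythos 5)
Your proposal is correct and follows essentially the same route as the paper: Smith normal form, the observation that $E^2=E$ splits $R^n=\operatorname{Im}(E)\dot+\operatorname{Ker}(E)$ into free summands so that the torsion-free cokernel forces each invariant factor $d_i$ to be a unit, and then $S=P^{-1}$, $T=Q^{-1}$ with the block identity $D_\ell TSD_\ell=D_\ell$ equivalent to idempotency. The only difference is cosmetic: you spell out the justification for the splitting and for the converse's rank computation slightly more explicitly than the paper does.
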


If we write
\[
\left(\begin{array}{ccc}
    \rotvert &{}_1{\bf t}& \rotvert \\
     &\vdots&  \\
    \rotvert &{}_\ell{\bf t}& \rotvert \\
    \rotvert &{\bf 0}^{\mathsf T}& \rotvert \\
     &\vdots&  \\
    \rotvert &{\bf 0}^{\mathsf T}& \rotvert 
\end{array}\right)=
\left(\begin{array}{c|c}
    A_{\ell\times \ell} & B_{\ell\times(n-\ell)} \\
    \hline
    O_{(n-\ell)\times \ell} & O_{(n-\ell)\times (n-\ell)}
\end{array}\right)
\]
and
\[
\left(\begin{array}{cccccc}
     \vert&&\vert&\vert&&\vert  \\
     {\bf s}_1&\cdots&{\bf s}_\ell&{\bf 0}&\cdots&{\bf 0}  \\
     \vert&&\vert&\vert&&\vert 
\end{array}\right)=
\left(\begin{array}{c|c}
    C_{\ell\times \ell} & O_{\ell\times(n-\ell)} \\
    \hline
    D_{(n-\ell)\times \ell} & O_{(n-\ell)\times (n-\ell)}
\end{array}\right)
\]
then the equation tells us that
\[
\left(\begin{array}{c|c}
    A & B \\
     \hline
    O & O
\end{array}\right)\left(\begin{array}{c|c}
    C & O \\
     \hline
    D & O
\end{array}\right)=
\left(\begin{array}{c|c}
    I_\ell & O \\
     \hline
    O & O
\end{array}\right)
\]
and so
\[
\left(\begin{array}{c|c}
    AC+BD & O \\
     \hline
    O & O
\end{array}\right)=\left(\begin{array}{c|c}
    I_\ell & O \\
     \hline
    O & O
\end{array}\right).
\]
In other words, $AC+BD=I_\ell$. In this situation, we know that
\[
\begin{array}{ll}
    E &=\left(\begin{array}{c|c}
    C & O \\
     \hline
    D & O
\end{array}\right)\left(\begin{array}{c|c}
    I_\ell & O \\
     \hline
    O & O
\end{array}\right)\left(\begin{array}{c|c}
    A & B \\
     \hline
    O & O
\end{array}\right)=
\left(\begin{array}{c|c}
    C & O \\
     \hline
    D & O
\end{array}\right)\left(\begin{array}{c|c}
    A & B \\
     \hline
    O & O
\end{array}\right)  \\
     & =
\left(\begin{array}{c|c}
    CA & CB \\
     \hline
    DA & DB
\end{array}\right)
\end{array}.
\]
Note that
\[
\begin{array}{ll}
    E^2 & =\left(\begin{array}{c|c}
    CA & CB \\
     \hline
    DA & DB
\end{array}\right)\left(\begin{array}{c|c}
    CA & CB \\
     \hline
    DA & DB
\end{array}\right) \\
     & =
\left(\begin{array}{c|c}
    C(AC+BD)A & C(AC+BD)B \\
     \hline
    D(AC+BD)A & D(AC+BD)B
\end{array}\right) \\
     & =\left(\begin{array}{c|c}
    CA & CB \\
     \hline
    DA & DB
\end{array}\right)=E
\end{array}.
\]

Therefore, we can drop the invertible condition on $S$ and $T$.
We have the following.

\begin{theorem}
    \label{thm: idempotent in Mn(PID)}
    Let $R$ be a PID and $E\in M_n(R)$.
    Then $E$ is an idempotent of rank $\ell$ if and only if there exist
    \[
    S=\left(\begin{array}{c|c}
        C & O \\
        \hline
        D & O
    \end{array}\right),\ {\rm and}\ 
    T=\left(\begin{array}{c|c}
        A & B \\
        \hline
        O & O
    \end{array}\right)
    \]
    with $A,C\in M_\ell(R)$ and $AC+BD=I_\ell$ such that
    \[
    E=S\left(\begin{array}{c|c}
    I_\ell & O \\
     \hline
    O & O
\end{array}\right)T=ST.
    \]
\end{theorem}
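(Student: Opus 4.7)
The plan is to split the biconditional and exploit the block-matrix decomposition work already done above Theorem~\ref{thm: idempotent in Mn(PID)}.

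For the ($\Rightarrow$) direction, I would apply Theorem~\ref{thm: the form of idempotent matrix with invertible S T} to obtain invertible $S_0,T_0\in M_n(R)^{\times}$ with
\[
E=S_0\left(\begin{array}{c|c} I_\ell & O \\ \hline O & O \end{array}\right)T_0
\]
and the auxiliary identity in the theorem. Writing $T_0=\left(\begin{array}{c|c} A & B \\ \hline A' & B' \end{array}\right)$ and $S_0=\left(\begin{array}{c|c} C & C' \\ \hline D & D' \end{array}\right)$ in $\ell/(n-\ell)$ blocks, I observe that right-multiplying $S_0$ by the rank-$\ell$ diagonal zeros out the last $n-\ell$ columns, producing exactly the $S=\left(\begin{array}{c|c} C & O \\ \hline D & O \end{array}\right)$ of the statement, and symmetrically for $T$. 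Then the identity $\left(\begin{array}{c|c} I_\ell & O \\ \hline O & O\end{array}\right)T_0S_0\left(\begin{array}{c|c} I_\ell & O \\ \hline O & O\end{array}\right)=\left(\begin{array}{c|c} I_\ell & O \\ \hline O & O\end{array}\right)$ reduces by block multiplication to $AC+BD=I_\ell$, which is exactly the required relation. Finally $ST=S_0\left(\begin{smallmatrix} I_\ell & O \\ O & O\end{smallmatrix}\right)T_0=E$.

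For the ($\Leftarrow$) direction, I would simply carry out the block multiplications already displayed in the paragraphs preceding the theorem. Namely,
\[
ST=\left(\begin{array}{c|c} C & O \\ \hline D & O \end{array}\right)\left(\begin{array}{c|c} A & B \\ \hline O & O \end{array}\right)=\left(\begin{array}{c|c} CA & CB \\ \hline DA & DB \end{array}\right)=:E,
\]
and then
\[
E^2=\left(\begin{array}{c|c} C(AC+BD)A & C(AC+BD)B \\ \hline D(AC+BD)A & D(AC+BD)B \end{array}\right)=E,
\]
using $AC+BD=I_\ell$ to collapse the middle factor. It remains to verify the rank assertion: factoring $E=\left(\begin{smallmatrix} C \\ D\end{smallmatrix}\right)\left(\begin{smallmatrix} A & B\end{smallmatrix}\right)$ shows $\operatorname{rank}(E)\le\ell$, and the identity $\left(\begin{smallmatrix} A & B\end{smallmatrix}\right)\left(\begin{smallmatrix} C \\ D\end{smallmatrix}\right)=I_\ell$ forces both rectangular factors to have rank $\ell$ (viewed over the field of fractions, where the identity gives one-sided inverses), so $\operatorname{rank}(E)=\ell$.

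The only genuinely new content beyond Theorem~\ref{thm: the form of idempotent matrix with invertible S T} is noticing that the computation $E^2=E$ uses nothing about invertibility of $S$ or $T$ individually, only the relation $AC+BD=I_\ell$ between the four block entries. The main conceptual obstacle, if there is one, is recognizing that one should not try to reconstruct $S$ and $T$ as invertible matrices; the theorem's value is precisely that it removes this constraint and provides a flexible constructive parametrization. Verifying the rank without invertibility, via the one-sided inverse argument over the fraction field, is the one place where the PID hypothesis (through its embedding in a field) is implicitly used in the reverse direction.
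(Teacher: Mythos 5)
Your proposal is correct and follows essentially the same route as the paper: the forward direction is delegated to Theorem~\ref{thm: the form of idempotent matrix with invertible S T} plus the block computation, and the reverse direction consists of the $E^2=E$ verification via $AC+BD=I_\ell$ together with a rank check over the field of fractions. Your rank argument (the thin factorization $E=\bigl(\begin{smallmatrix}C\\D\end{smallmatrix}\bigr)\bigl(\begin{smallmatrix}A&B\end{smallmatrix}\bigr)$ with $\bigl(\begin{smallmatrix}A&B\end{smallmatrix}\bigr)\bigl(\begin{smallmatrix}C\\D\end{smallmatrix}\bigr)=I_\ell$ giving one-sided inverses) is a slightly more direct variant of the paper's step, which instead shows $\operatorname{rank}(S)=\operatorname{rank}(T)=\ell$ and completes $S,T$ to invertible matrices $S',T'$ over the fraction field.
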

\begin{proof}
    ($\Rightarrow$) By {Theorem~\ref{thm: the form of idempotent matrix with invertible S T}}, this part is clear.

    ($\Leftarrow$) The only thing we need to worry about is whether $\operatorname{rank}(E)=\ell$ is correct.
    By assumption, we know that
    \[
    TS=\left(\begin{array}{c|c}
        A & B \\
        \hline
        O & O
    \end{array}\right)
    \left(\begin{array}{c|c}
        C & O \\
        \hline
        D & O
    \end{array}\right)
    =
    \left(\begin{array}{c|c}
        AC+BD & O \\
        \hline
        O & O
    \end{array}\right)
    =\left(\begin{array}{c|c}
        I_{\ell} & O \\
        \hline
        O & O
    \end{array}\right)
    \]
    and so $\operatorname{rank}(TS)=\ell$.
    Since 
    \[
    (TS)^{\mathsf T}=\left(\begin{array}{c|c}
        I_{\ell} & O \\
        \hline
        O & O
    \end{array}\right)^{\mathsf T}=\left(\begin{array}{c|c}
        I_{\ell} & O \\
        \hline
        O & O
    \end{array}\right)=TS,
    \]
    we have $\operatorname{rank}(S^{\mathsf T}T^{\mathsf T})=\ell$.
    By elementary linear algebra, we know that $\operatorname{rank}(TS)\le \operatorname{rank}(T)$ and $\operatorname{rank}(S^{\mathsf T}T^{\mathsf T})\le \operatorname{rank}(S^{\mathsf T})=\operatorname{rank}(S)$.
    Thus, we have $\ell=\operatorname{rank}(TS)\le \operatorname{rank}(T)\le \ell$ and $\ell=\operatorname{rank}(S^{\mathsf T}T^{\mathsf T})\le \operatorname{rank}(S)\le \ell$. We can conclude that $\operatorname{rank}(T)=\ell=\operatorname{rank}(S)$.
    Therefore, all nonzero columns of $S$ are linearly independent over $K$ and all nonzero rows of $T$ are linearly independent over $K$ where $K$ is the field of fractions of $R$.
    Thus, we can find $S',T'\in M_n(K)^{\times}$ satisfying
    \[
    T'=\left(\begin{array}{c|c}
        A & B \\
        \hline
        G & H
    \end{array}\right)\ {\textrm {and}}\ 
    S'=\left(\begin{array}{c|c}
        C & U \\
        \hline
        D & V
    \end{array}\right).
    \]
    By direct computation,
    \[
    S'\left(\begin{array}{c|c}
    I_\ell & O \\
     \hline
    O & O
\end{array}\right)T'=\left(\begin{array}{c|c}
        C & U \\
        \hline
        D & V
    \end{array}\right)\left(\begin{array}{c|c}
    I_\ell & O \\
     \hline
    O & O
\end{array}\right)\left(\begin{array}{c|c}
        A & B \\
        \hline
        G & H
    \end{array}\right)=
    S\left(\begin{array}{c|c}
    I_\ell & O \\
     \hline
    O & O
\end{array}\right)T=E.
    \]
    Since $S'$ and $T'$ are invertible, we obtain
    \[
    \operatorname{rank}(E)=\operatorname{rank}(S'\left(\begin{array}{c|c}
    I_\ell & O \\
     \hline
    O & O
\end{array}\right)T')=\operatorname{rank}(\left(\begin{array}{c|c}
    I_\ell & O \\
     \hline
    O & O
\end{array}\right))=\ell.
    \]
\end{proof}

Observing the above formula
\[
E=ST=\left(\begin{array}{c|c}
    CA & CB \\
     \hline
    DA & DB
\end{array}\right),
\]
we found that the expression of $E$ is very similar to what we write in {Proposition~\ref{prop: idempotent in M2(UFD)}}.
Note that $B,D$ may not be square matrices.
The above theorem can be summarized in the following alternative form.

\begin{theorem}
    \label{thm: idempotent in Mn(PID) in alternative form}
    Let $R$ be a PID and $E\in M_n(R)$. Then $E$ is an idempotent of rank $\ell$ if and only if
    \[
    E=\left(\begin{array}{c|c}
        CA & CB \\
         \hline
        DA & DB
    \end{array}\right)
    \]
    for some $A,C\in M_{\ell}(R)$ and matrices $B,D$ such that $AC+BD=I_{\ell}$.
\end{theorem}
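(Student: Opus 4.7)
The plan is to view Theorem~\ref{thm: idempotent in Mn(PID) in alternative form} as a direct block-matrix reformulation of Theorem~\ref{thm: idempotent in Mn(PID)}. The entire conceptual content has already been proven in the previous theorem; what remains is to recognize that the block product of the matrices $S$ and $T$ appearing there is precisely the block form stated here.

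For the forward direction, I would start from Theorem~\ref{thm: idempotent in Mn(PID)}: an idempotent $E \in M_n(R)$ of rank $\ell$ can be written as $E = ST$ where
\[
S=\left(\begin{array}{c|c}
        C & O \\
        \hline
        D & O
    \end{array}\right),\qquad
    T=\left(\begin{array}{c|c}
        A & B \\
        \hline
        O & O
    \end{array}\right)
\]
with $A, C \in M_\ell(R)$ and $AC + BD = I_\ell$. Carrying out the block multiplication then gives exactly the claimed form
\[
E=ST=\left(\begin{array}{c|c}
        CA & CB \\
        \hline
        DA & DB
    \end{array}\right).
\]

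For the reverse direction, suppose $E$ has the stated block form for some $A, C \in M_\ell(R)$ and some (not necessarily square) matrices $B, D$ of compatible sizes, with $AC + BD = I_\ell$. I would then define $S$ and $T$ by the same formulas as above and verify (by the block computation already recorded) that $ST$ reproduces $E$. Since $A, C \in M_\ell(R)$ and $AC + BD = I_\ell$, the hypotheses of Theorem~\ref{thm: idempotent in Mn(PID)} are satisfied, so $E$ is idempotent of rank exactly $\ell$.

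There is essentially no obstacle; the only point worth flagging is the dimension bookkeeping for $B$ and $D$, which are rectangular of sizes $\ell \times (n-\ell)$ and $(n-\ell) \times \ell$ respectively, so that the products $CB$, $DA$, $DB$ and the sum $AC + BD$ all make sense. Once this is noted, the equivalence is immediate from Theorem~\ref{thm: idempotent in Mn(PID)}.
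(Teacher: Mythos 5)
Your proposal is correct and matches the paper's own treatment: the paper derives the block identity $E=ST=\left(\begin{smallmatrix} CA & CB \\ DA & DB \end{smallmatrix}\right)$ and the idempotency computation from $AC+BD=I_\ell$ in the discussion immediately preceding the statement, and presents Theorem~\ref{thm: idempotent in Mn(PID) in alternative form} as exactly this reformulation of Theorem~\ref{thm: idempotent in Mn(PID)}. Your remark on the rectangular sizes of $B$ and $D$ is the only bookkeeping needed, and it is handled the same way implicitly in the paper.
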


\begin{example}
    Let $R$ be a PID.
    Let $a_i,b_i\in R$ with $\gcd(a_i,b_i)=1$. Because $R$ is a PID, there exists $g_i,h_i\in R$ such that $a_ig_i+b_ih_i=1$.
    By choosing
    \[
    A=\left(\begin{array}{cc}
        a_1 & b_1 \\
        0 & 0
    \end{array}\right),\ 
    B=\left(\begin{array}{cc}
        0 & 0 \\
        a_2 & b_2
    \end{array}\right),\ 
    C=\left(\begin{array}{cc}
        g_1 & -b_1 \\
        h_1 & a_1
    \end{array}\right),\ 
    D=\left(\begin{array}{cc}
        b_2 & g_2 \\
        -a_2 & h_2
    \end{array}\right),
    \]
    then
    \[
    E=\left(\begin{array}{cccc}
         a_1g_1&b_1g_1&-a_2b_1&-b_2b_1  \\
         a_1h_1&b_1h_1&a_2a_1&b_2a_1  \\
         a_1b_2&b_1b_2&a_2g_2&b_2g_2  \\
         -a_1a_2&-b_1a_2&a_2h_2&b_2h_2
    \end{array}\right)
    \]
    is an idempotent.
    We give three concrete examples as $R={\mathbb Z}$, $R={\mathbb F}_2[x]$ and $R={\mathbb Q}[x]$, respectively.
    \begin{enumerate}[(a)]
        \item Let $R={\mathbb Z}$. By choosing
    \[
    A=\left(\begin{array}{cc}
        3 & -1 \\
        0 & 0
    \end{array}\right),\ 
    B=\left(\begin{array}{cc}
        0 & 0 \\
        -3 & 7
    \end{array}\right),\ 
    C=\left(\begin{array}{cc}
        2 & 1 \\
        5 & 3
    \end{array}\right),\ 
    D=\left(\begin{array}{cc}
        7 & -5 \\
        3 & -2
    \end{array}\right),
    \]
    then we can construct an idempotent
    \[
    E=\left(\begin{array}{cccc}
         6&-2&-3&7  \\
         15&-5&-9&21  \\
         21&-7&15&-35  \\
         9&-3&6&-14
    \end{array}\right)\in M_4({\mathbb Z}).
    \]
    
        \item Let $R={\mathbb F}_2[x]$. By choosing
    \[
    A=\left(\begin{array}{cc}
        x^2 & x^2+x+1 \\
        0 & 0
    \end{array}\right),\ 
    B=\left(\begin{array}{cc}
        0 & 0 \\
        x^3+x+1 & x+1
    \end{array}\right),
    \]
    \[
    C=\left(\begin{array}{cc}
        x & x^2+x+1 \\
        x+1 & x^2
    \end{array}\right),\ 
    D=\left(\begin{array}{cc}
        x+1 & 1 \\
        x^3+x+1 & x^2+x
    \end{array}\right),
    \]
    then we can construct an idempotent
    \[
    E=\left(\begin{array}{cccc}
         x^3&x^3+x^2+x&x^5+x^4+1&x^3+1  \\
         x^3+x^2&x^3+1&x^5+x^3+x^2&x^3+x^2  \\
         x^3+x^2&x^3+1&x^3+x+1&x+1  \\
         x^5+x^3+x^2&x^5+x^4+1&x^5+x^4+x^3+x&x^3+x
    \end{array}\right)\in M_4({\mathbb F}_2[x]).
    \]
    Note that $1=-1\in {\mathbb F}_2[x]$.

    \item Let $R={\mathbb Q}[x]$.
    By choosing $a_1=x+1,b_1=x^4+x^3+x^2+x+1,a_2=x^2+x+1,b_2=x^2+1$ and $g_1=-x^3-x,h_1=1,g_2=-x,h_2=1+x$, then we can construct an idempotent
    {
    \footnotesize
    \[
    \left(\begin{array}{cccc}
         \begin{array}{r}
              -x^4-x^3-x^2  \\
               -x
         \end{array}&\begin{array}{r}
              -x^7-x^6-2x^5  \\
               -2x^4-2x^3\\
               -x^2-x
         \end{array}&\begin{array}{r}
              -x^6-2x^5-3x^4  \\
               -3x^3-3x^2\\
               -2x-1
         \end{array}& \begin{array}{r}
              -x^6-x^5-2x^4  \\
               -2x^3-2x^2\\
               -x-1
         \end{array} \\
         %\hline
         x+1&\begin{array}{r}
              x^4+x^3+x^2  \\
              +x+1
         \end{array}&\begin{array}{r}
              x^3+2x^2+2x  \\
              +1
         \end{array}&\begin{array}{r}
              x^3+x^2+x  \\
              +1
         \end{array}  \\
         %\hline
         \begin{array}{r}
              x^3+x^2+x  \\
               +1
         \end{array}&\begin{array}{r}
              x^6+x^5+2x^4  \\
              +2x^3+2x^2  \\
              +x+1
         \end{array}&-x^3-x^2-x&-x^3-x  \\
         %\hline
         \begin{array}{r}
              -x^3-2x^2-2x  \\
               -1
         \end{array}&\begin{array}{r}
              -x^6-2x^5-3x^4  \\
               -3x^3-3x^2\\
               -2x-1
         \end{array}&\begin{array}{r}
              x^3+2x^2+2x  \\
               +1
         \end{array}&\begin{array}{r}
              x^3+x^2+x  \\
               +1
         \end{array}
    \end{array}\right).
    \]
    }
    \end{enumerate}
    Because the idempotents we have usually found containing a lot of zero entries, the above three idempotents without zero entries are somewhat unexpected.
    \hfill $\square$
\end{example}

\subsection{Additional Constructions}
\label{subsec: Additional Constructions}

Let $K$ be a field.
Let $A\in M_m(K)$ and $B\in M_n(K)$. The {\bf Kronecker product of} $A$ {\bf and} $B$\index{Kronecker product} is denoted as $A\boxtimes B$\index{$\boxtimes$: Kronecker product} and defined by
\[
A\boxtimes B=\left(\begin{array}{cccc}
     a_{11}B&a_{12}B&\cdots&a_{1m}B  \\
     a_{21}B&a_{22}B&\cdots&a_{2m}B \\
     \vdots&\vdots&\ddots&\vdots \\
     a_{m1}B&a_{m2}B&\cdots&a_{mm}B
\end{array}\right)\in M_{mn}(K),
\]
where $A=(a_{ij})$.
The Kronecker product is $K$-bilinear. That is,

\begin{lemma}
    \label{lem: Kronecker product is bilinear}
    Let $A,A'\in M_m(K)$ and $B,B'\in M_n(K)$. Then we have
    \[
    \begin{array}{ll}
        (A+A')\boxtimes B & =A\boxtimes B+A'\boxtimes B, \\
        A\boxtimes (B+B') & =A\boxtimes B+A\boxtimes B',
    \end{array}
    \]
    and for any $k\in K$
    \[
    (kA)\boxtimes B=k(A\boxtimes B)=A\boxtimes(kB).
    \]
\end{lemma}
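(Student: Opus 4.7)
The plan is to verify each identity by comparing corresponding blocks, since the Kronecker product is defined in terms of $m^2$ blocks of size $n \times n$ arranged in an $m \times m$ pattern. Because everything is entrywise and the field operations distribute, the proof will be a routine block-by-block calculation; no nontrivial insight is required.

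First I would fix notation: write $A=(a_{ij})$, $A'=(a'_{ij})$, $B=(b_{kl})$, $B'=(b'_{kl})$. Then observe that $M\boxtimes N$ is uniquely determined by its $m^2$ blocks of size $n\times n$: the $(i,j)$-block of $A\boxtimes B$ equals $a_{ij}B$. To establish $(A+A')\boxtimes B = A\boxtimes B + A'\boxtimes B$, I would compute the $(i,j)$-block of the left-hand side as $(a_{ij}+a'_{ij})B$, distribute the scalar across $B$ to get $a_{ij}B + a'_{ij}B$, and recognize this as the $(i,j)$-block of the right-hand side. Since equality of $m^2$ blocks implies equality of matrices, the identity follows. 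The second additivity identity is handled the same way: the $(i,j)$-block of $A\boxtimes(B+B')$ is $a_{ij}(B+B') = a_{ij}B + a_{ij}B'$, which matches $A\boxtimes B + A\boxtimes B'$ block-by-block.

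For the scalar identity $(kA)\boxtimes B = k(A\boxtimes B) = A\boxtimes(kB)$, I would again compare $(i,j)$-blocks: the block of $(kA)\boxtimes B$ is $(ka_{ij})B$, the block of $k(A\boxtimes B)$ is $k(a_{ij}B)$, and the block of $A\boxtimes(kB)$ is $a_{ij}(kB)$. All three expressions coincide by associativity and commutativity of scalar multiplication in $M_n(K)$, since $k$ lies in the center $K$ of the matrix algebra.

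There is no genuine obstacle here. The only thing to be careful about is the bookkeeping: making clear what the ``block structure'' of $A\boxtimes B$ is, and observing that equality of matrices built from matching blocks reduces to equality of the individual blocks. Once that reduction is in place, each of the three statements collapses to a one-line scalar-times-matrix manipulation.
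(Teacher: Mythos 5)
Your proof is correct; the paper in fact omits a proof of this lemma entirely, treating it as immediate from the definition, and your block-by-block verification is exactly the routine argument being taken for granted. Nothing further is needed.
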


Because $A\boxtimes B\in M_{mn}(K)$, it is natural to see under what conditions $A\boxtimes B$ is an idempotent. 
To begin with, we need to introduce the concept of tensor product.
We will not dwell on the construction of {\bf tensor product}\index{tensor product} and the definition of {\bf bilinear map}\index{bilinear map} here.
For details on the tensor product, see \cite[Section~IV.5]{algebra}.
We just list the necessary statement.
Note that for any two $R$-modules $U$ and $V$, we write $U\otimes_R V$\index{$\otimes_R$: tensor product over $R$} to denote the tensor product of $U$ and $V$ over $R$.

\begin{theorem}
    [{\cite[Theorem~IV.5.6]{algebra}}]
    \label{thm: R module homomorphism of tensors induced by a bilinear map}
    If $U,V,W$ are modules over a commutative ring $R$ and $f:U\times V\to W$ is a bilinear map, then there is a unique $R$-module homomorphism $\overline{f}:U\otimes_R V\to W$ such that $\overline{f}\circ \iota= f$, where $\iota:U\times V\to U\otimes_R V$ is the canonical bilinear map. 
\[
    \begin{tikzcd}
    U\times V \arrow{r}{\iota} \arrow[swap]{d}{f} & U \otimes_R V \arrow{ld}{\overline{f}}\\
    W&  
    \end{tikzcd}
\]
    The module $U\otimes_R V$ is uniquely determined up to isomorphism by this property.
\end{theorem}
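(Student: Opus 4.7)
The plan is to verify the universal property by explicit construction of $U\otimes_R V$ and then extract uniqueness from that property in the standard way. First I would build $U\otimes_R V$ as a quotient of the free $R$-module $F(U\times V)$ on the set $U\times V$ by the submodule $N$ generated by all expressions of the form
\[
(u+u',v)-(u,v)-(u',v),\quad (u,v+v')-(u,v)-(u,v'),\quad (ru,v)-r(u,v),\quad (u,rv)-r(u,v),
\]
for $u,u'\in U$, $v,v'\in V$, $r\in R$. The canonical map $\iota:U\times V\to U\otimes_R V$ is then defined by sending $(u,v)$ to the coset $u\otimes v := (u,v)+N$, and the defining relations in $N$ are exactly what is needed to ensure $\iota$ is bilinear.

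Next I would construct $\overline{f}$. By the universal property of the free module $F(U\times V)$, the set-theoretic map $(u,v)\mapsto f(u,v)$ extends uniquely to an $R$-module homomorphism $\widetilde{f}:F(U\times V)\to W$. The bilinearity of $f$ forces $\widetilde{f}$ to vanish on each of the four generating families listed above, hence on all of $N$. Therefore $\widetilde{f}$ descends to an $R$-module homomorphism $\overline{f}:U\otimes_R V\to W$ satisfying $\overline{f}(u\otimes v)=f(u,v)$, which is the required identity $\overline{f}\circ\iota=f$. For uniqueness of $\overline{f}$, I would observe that $U\otimes_R V$ is generated as an $R$-module by the elements $u\otimes v=\iota(u,v)$; any two $R$-module maps agreeing on a generating set agree everywhere.

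Finally, for uniqueness of $U\otimes_R V$ up to isomorphism, I would run the usual universal-property diagram chase: if $(T,\iota')$ is another pair consisting of an $R$-module $T$ and a bilinear map $\iota':U\times V\to T$ satisfying the same universal property, then applying the property of $(U\otimes_R V,\iota)$ to $\iota'$ gives a homomorphism $U\otimes_R V\to T$, and applying the property of $(T,\iota')$ to $\iota$ gives a homomorphism $T\to U\otimes_R V$. The two compositions are $R$-module endomorphisms fixing $\iota$ (resp.\ $\iota'$), so by the uniqueness clause of the universal property they must coincide with the identity maps on $U\otimes_R V$ and $T$ respectively.

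There is no real obstacle here; the only point requiring care is the verification that $\widetilde{f}$ kills every generator of $N$, and this is a direct calculation using bilinearity of $f$. Because this theorem is cited verbatim from \cite[Theorem~IV.5.6]{algebra}, I would in practice simply refer the reader to that source rather than reproduce the argument; the sketch above is meant only to record what the proof looks like.
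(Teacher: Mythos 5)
Your argument is correct and is the standard free-module-quotient construction of the tensor product together with the usual diagram chase for uniqueness; this is exactly the argument in the cited source \cite[Theorem~IV.5.6]{algebra}. The paper itself gives no proof of this statement (it is imported verbatim with a citation), so deferring to the reference, as you note at the end, is precisely what the authors do.
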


Consider the tensor product $M_m(K)\otimes_K M_n(K)$. That is, $R=K, U=M_m(K)$ and $V=M_n(K)$. 

\begin{proposition}
    Let $K$ be a field. Then there exists a $K$-module isomorphism $\overline{f}:M_m(K)\otimes_K M_n(K)\to M_{mn}(K)$ satisfying
    \[
    \begin{array}{cccc}
     \overline{f}:&M_m(K)\otimes_K M_n(K)&\to& M_{mn}(K) \\
     &A\otimes B&\mapsto&A\boxtimes B 
\end{array},
    \]
    where $A\in M_m(K)$ and $B\in M_n(K)$.
\end{proposition}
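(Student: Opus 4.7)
The plan is to first construct the map via the universal property of the tensor product, and then verify that it is a bijection by a dimension argument.

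First, I would define $f\colon M_m(K)\times M_n(K)\to M_{mn}(K)$ by $f(A,B)=A\boxtimes B$. By {Lemma~\ref{lem: Kronecker product is bilinear}}, this map is $K$-bilinear. Applying {Theorem~\ref{thm: R module homomorphism of tensors induced by a bilinear map}} (with $R=K$, $U=M_m(K)$, $V=M_n(K)$, $W=M_{mn}(K)$), I obtain a unique $K$-module homomorphism $\overline{f}\colon M_m(K)\otimes_K M_n(K)\to M_{mn}(K)$ such that $\overline{f}\circ\iota=f$. Unwinding $\iota(A,B)=A\otimes B$, this gives the desired formula $\overline{f}(A\otimes B)=A\boxtimes B$.

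Next, I would verify that $\overline{f}$ is bijective. Both the domain and codomain are finite-dimensional $K$-vector spaces: $\dim_K M_{mn}(K)=(mn)^2$, while $\dim_K\bigl(M_m(K)\otimes_K M_n(K)\bigr)=m^2\cdot n^2$ by the standard fact that the dimension of a tensor product of finite-dimensional vector spaces is the product of the dimensions. Since these dimensions coincide, it suffices to check surjectivity (or injectivity) alone. For surjectivity, let $E_{ij}^{(m)}\in M_m(K)$ and $E_{kl}^{(n)}\in M_n(K)$ denote the standard matrix units. A direct inspection of the block definition of the Kronecker product shows that $E_{ij}^{(m)}\boxtimes E_{kl}^{(n)}$ is precisely the matrix unit in $M_{mn}(K)$ having a single $1$ in row $(i-1)n+k$ and column $(j-1)n+l$. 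As $(i,j,k,l)$ ranges over all valid indices, these $(mn)^2$ matrices exhaust a basis of $M_{mn}(K)$, and each lies in the image of $\overline{f}$, so $\overline{f}$ is surjective.

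The main obstacle, to the extent there is one, is really just keeping the index bookkeeping of the Kronecker product straight when identifying $E_{ij}^{(m)}\boxtimes E_{kl}^{(n)}$ with a standard matrix unit of $M_{mn}(K)$; everything else is a direct application of the universal property and a dimension count. Once surjectivity is established, the equality of dimensions forces $\overline{f}$ to be a $K$-module isomorphism, completing the proof.
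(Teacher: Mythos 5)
Your proposal is correct and follows essentially the same route as the paper: build $\overline{f}$ from the universal property applied to the bilinear Kronecker map, show surjectivity by identifying $E_{ij}^{(m)}\boxtimes E_{kl}^{(n)}$ with the matrix unit $E^{(mn)}_{(i-1)n+k,(j-1)n+l}$, and conclude by equality of dimensions. (Your dimension count $m^2n^2=(mn)^2$ is the correct one; the paper's proof writes $mn$ there, which is a typo.)
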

\begin{proof}
    Define $f:M_m(K)\times M_n(K)\to M_{mn}(K)$ by $f:(A,B)\mapsto A\boxtimes B$. Then by {Lemma~\ref{lem: Kronecker product is bilinear}}, we know that $f$ is a bilinear map.
    By {Theorem~\ref{thm: R module homomorphism of tensors induced by a bilinear map}}, we know that there is a unique $K$-module homomorphism $\overline{f}:M_m(K)\otimes_K M_n(K)\to M_{mn}(K)$ such that $f=\overline{f}\circ\iota$. Then $\overline{f}(A\otimes B)=\overline{f}(\iota(A,B))=(\overline{f}\circ \iota)(A,B)=f(A,B)=A\boxtimes B$.

    Let $E^{(m)}_{ij},E^{(n)}_{rs}$, and $E^{(mn)}_{uv}$ be the matrix units in $M_m(K),M_n(K)$, and $M_{mn}(K)$, respectively. Because $E^{(m)}_{ij}\boxtimes E^{(n)}_{rs}=E^{(mn)}_{(i-1)n+r,(j-1)n+s}$, we know that $\{E^{(m)}_{ij}\boxtimes E^{(n)}_{rs}\mid 1\le i,j\le m,1\le r,s\le n\}=\{E^{(mn)}_{uv}\mid 1\le u,v\le mn\}$ is a basis for $M_{mn}(K)$.
    Thus, $\overline{f}$ is a surjective map since $\overline{f}(E^{(m)}_{ij}\otimes E^{(n)}_{rs})=E^{(m)}_{ij}\boxtimes E^{(n)}_{rs}$ and that $\overline{f}$ is a $K$-homomorphism.
    Because ${\rm dim}_K(M_m(K)\otimes_K M_n(K))=mn={\rm dim}_K(M_{mn}(K))$ and that $\overline{f}$ is a $K$-module epimorphism, $\overline{f}$ must be $K$-module isomorphism.
\end{proof}

Note that $M_n(K)\otimes_K M_m(K)$ has a $K$-algebra structure, where the multiplication of $A\otimes B$ and $C\otimes D$ is given by $(A\otimes B)(C\otimes D)=AC\otimes BD$.
From the $K$-module isomorphism $\overline{f}$, it suggests that
\[
(A\boxtimes B)(C\boxtimes D)=AC\boxtimes BD
\]
for $A,C\in M_m(K)$ and $B,D\in M_n(K)$.
(Equivalently, $\overline{f}$ is a $K$-algebra homomorphism.)

\begin{proposition}
    Let $A,C\in M_m(K)$ and $B,D\in M_n(K)$. Then
    \[
    (A\boxtimes B)(C\boxtimes D)=AC\boxtimes BD.
    \]
    In particular, $\overline{f}$ is a $K$-algebra isomorphism.
\end{proposition}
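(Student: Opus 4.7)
My plan is to verify the identity directly by block-matrix multiplication, using the definition of the Kronecker product; the ``in particular'' assertion then follows because \textbf{$\overline{f}$} was already shown to be a $K$-module isomorphism, so preservation of multiplication on the spanning set $\{A\otimes B\}$ together with $K$-linearity will upgrade it to a $K$-algebra isomorphism.

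First I would write $A=(a_{ij})$ and $C=(c_{ij})$ in $M_m(K)$ and view $A\boxtimes B$ and $C\boxtimes D$ as $m\times m$ block matrices whose $(i,j)$-blocks are $a_{ij}B$ and $c_{ij}D$ respectively, each of size $n\times n$. Applying ordinary block matrix multiplication, the $(i,j)$-block of $(A\boxtimes B)(C\boxtimes D)$ is
\[
\sum_{k=1}^{m}(a_{ik}B)(c_{kj}D)=\sum_{k=1}^{m}a_{ik}c_{kj}(BD)=\Bigl(\sum_{k=1}^{m}a_{ik}c_{kj}\Bigr)BD=(AC)_{ij}\,BD,
\]
where I use that scalar factors from $K$ commute past the matrices $B$ and $D$. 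This is exactly the $(i,j)$-block of $AC\boxtimes BD$, which establishes the formula.

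For the second assertion, I would note that $\overline{f}$ is already a $K$-module isomorphism, so it only remains to check multiplicativity. By the first part,
\[
\overline{f}\bigl((A\otimes B)(C\otimes D)\bigr)=\overline{f}(AC\otimes BD)=AC\boxtimes BD=(A\boxtimes B)(C\boxtimes D)=\overline{f}(A\otimes B)\,\overline{f}(C\otimes D),
\]
and since every element of $M_m(K)\otimes_K M_n(K)$ is a $K$-linear combination of simple tensors $A\otimes B$, $K$-bilinearity of both multiplications extends the identity to arbitrary elements. Finally, $\overline{f}$ sends $I_m\otimes I_n$ to $I_m\boxtimes I_n=I_{mn}$, so it preserves units as well.

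I do not expect any serious obstacle: the only subtle point is to remember that multiplicativity on simple tensors is enough, since the map is already linear and simple tensors generate the tensor product as a $K$-module. The block computation itself is routine and relies only on the definition of $\boxtimes$ and the commutativity of scalars from $K$ with the entries of $B$ and $D$.
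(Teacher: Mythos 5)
Your proof is correct, but it takes a different route from the paper's. You compute the $(i,j)$-block of $(A\boxtimes B)(C\boxtimes D)$ directly by ordinary block multiplication, obtaining $\sum_{k}a_{ik}c_{kj}\,BD=(AC)_{ij}BD$ in one step; the paper instead first verifies the identity on matrix units, checking $(E_{ij}\boxtimes B)(E_{k\ell}\boxtimes D)=\delta_{jk}(E_{i\ell}\boxtimes BD)=E_{ij}E_{k\ell}\boxtimes BD$, and then extends to arbitrary $A=\sum a_{ij}E_{ij}$ and $C=\sum c_{k\ell}E_{k\ell}$ by the $K$-bilinearity of $\boxtimes$ established earlier. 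Your direct computation is shorter and more self-contained, since the paper's matrix-unit identity itself ultimately rests on the same block-multiplication fact that you prove in full generality; the paper's approach, on the other hand, fits the tensor-product framework it has set up (leaning on Lemma~\ref{lem: Kronecker product is bilinear} and the basis $\{E^{(m)}_{ij}\boxtimes E^{(n)}_{rs}\}$) and makes the reduction to simple tensors explicit. Your treatment of the ``in particular'' clause --- multiplicativity on simple tensors plus $K$-linearity and preservation of the identity --- is also slightly more complete than the paper's, which leaves that upgrade implicit. No gaps.
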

\begin{proof}
    Write $E_{ij}$ to be the matrix units in $M_m(K)$.
    Because $E_{ij}E_{k\ell}=\delta_{jk}E_{i\ell}$, we know that
    \[
    (E_{ij}\boxtimes B)(E_{k\ell}\boxtimes D)=\delta_{jk}(E_{i\ell}\boxtimes BD)=(\delta_{jk}E_{i\ell})\boxtimes BD=E_{ij}E_{k\ell}\boxtimes BD.
    \]
    Let $A=\sum_{i,j}a_{ij}E_{ij}$ and $C=\sum_{k,\ell}c_{k\ell}E_{k\ell}$.
    Since the Kronecker product is $K$-bilinear, we have $\left(\sum_{i,j}a_{ij}E_{ij}\right)\boxtimes B=\sum_{i,j}a_{ij}(E_{ij}\boxtimes B)$ and $\left(\sum_{k,\ell}c_{k\ell}E_{k\ell}\right)\boxtimes D=\sum_{k,\ell}c_{k\ell}(E_{k\ell}\boxtimes D)$.
    Thus,
    \[
    \begin{array}{ll}
        (A\boxtimes B)(C\boxtimes D) 
         & =\left(\sum_{i,j}a_{ij}(E_{ij}\boxtimes B)\right)\left(\sum_{k,\ell}c_{k\ell}(E_{k\ell}\boxtimes D)\right) \\
         & =\sum_{i,j,k,\ell}a_{ij}c_{k\ell}(E_{ij}\boxtimes B)(E_{k\ell}\boxtimes D)\\
         & =\sum_{i,j,k,\ell}a_{ij}c_{k\ell}\delta_{jk}(E_{i\ell}\boxtimes BD)\\
         & =\sum_{i,\ell}\sum_{j}a_{ij}c_{j\ell}(E_{i\ell}\boxtimes BD)\\
         & =(\sum_{i,\ell}\sum_{j}a_{ij}c_{j\ell}E_{i\ell})\boxtimes BD\\
         &= AC\boxtimes BD.
    \end{array}
    \]
\end{proof}

\begin{proposition}
    Let $A\in M_m(K)$ and $B\in M_n(K)$ be idempotents. Then $A\boxtimes B$ is an idempotent in $M_{mn}(K)$.
\end{proposition}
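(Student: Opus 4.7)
The plan is to exploit the preceding proposition, which gives the multiplicativity identity $(A\boxtimes B)(C\boxtimes D)=AC\boxtimes BD$ for the Kronecker product. Since this is exactly the statement that the $K$-algebra isomorphism $\overline{f}:M_m(K)\otimes_K M_n(K)\to M_{mn}(K)$ is compatible with products, idempotence should pass across $\overline{f}$ with almost no work.

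Concretely, I would specialize the multiplicativity identity by taking $C=A$ and $D=B$, obtaining
\[
(A\boxtimes B)^2=(A\boxtimes B)(A\boxtimes B)=A^2\boxtimes B^2.
\]
Then I would invoke the hypotheses $A^2=A$ and $B^2=B$ to conclude $(A\boxtimes B)^2=A\boxtimes B$, so $A\boxtimes B\in\mathscr{I}(M_{mn}(K))$.

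There is no real obstacle here; the content has all been absorbed into the preceding proposition on the multiplicativity of the Kronecker product. The only thing to check is that one really is allowed to set $C=A$ and $D=B$, which is immediate because $A,C$ live in $M_m(K)$ and $B,D$ live in $M_n(K)$, matching the hypotheses of that proposition. One could alternatively phrase the argument by saying: under the $K$-algebra isomorphism $\overline{f}$, the element $A\boxtimes B$ corresponds to the simple tensor $A\otimes B$, and $(A\otimes B)^2=A^2\otimes B^2=A\otimes B$ in $M_m(K)\otimes_K M_n(K)$, so the image $A\boxtimes B$ is idempotent as well. Either framing yields a proof of one or two lines.
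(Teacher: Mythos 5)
Your proposal is correct and matches the paper's own proof exactly: both apply the multiplicativity identity $(A\boxtimes B)(C\boxtimes D)=AC\boxtimes BD$ with $C=A$, $D=B$ to get $(A\boxtimes B)^2=A^2\boxtimes B^2=A\boxtimes B$. Nothing further is needed.
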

\begin{proof}
    Because $A^2=A$ and $B^2=B$, we have $(A\boxtimes B)^2=A^2\boxtimes B^2=A\boxtimes B$. Thus, $A\boxtimes B$ is an idempotent.
\end{proof}

However, the converse may not hold in general. In other words, there exists $A$ and $B$ are not idempotents, but $A\boxtimes B$ is an idempotent.

\begin{example}
    Let $K={\mathbb Q}$.
    Consider $A=\left(\begin{array}{cc}
       1/2  & 0 \\
        0 & 0
    \end{array}\right)$ and $B=\left(\begin{array}{cc}
        2 & 0 \\
        0 & 0
    \end{array}\right)$.
    Then we have
    \[
    A\boxtimes B=\left(\begin{array}{cccc}
         1&0&0&0  \\
         0&0&0&0  \\
         0&0&0&0  \\
         0&0&0&0
    \end{array}\right)
    \]
    which is an idempotent.
    \hfill $\square$
\end{example}

Because $M_m(K)$ and $M_n(K)$ are vector space over $K$, we know that $A\otimes B=0$ if and only if $A=O$ or $B=O$.
In the Kronecker product version, we have $A\boxtimes B=O$ if and only if $A=O$ or $B=O$.

\begin{proposition}
    \label{prop: the necessary condition for A tensor B to be idempotent}
    Let $A\in M_m(K)$ and $B\in M_n(K)$. Suppose $A\neq O$ and $A\boxtimes B$ are idempotents, then $B$ must be an idempotent.
    Suppose $B\neq O$ and $A\boxtimes B$ are idempotents, then $A$ must be an idempotent.
\end{proposition}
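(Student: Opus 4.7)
The multiplicative identity $(A\boxtimes B)(C\boxtimes D)=AC\boxtimes BD$ just proved specializes to $(A\boxtimes B)^{2}=A^{2}\boxtimes B^{2}$. Combining this with the two idempotency hypotheses $A^{2}=A$ and $(A\boxtimes B)^{2}=A\boxtimes B$, I would write down the chain
$$A\boxtimes B \;=\; (A\boxtimes B)^{2} \;=\; A^{2}\boxtimes B^{2} \;=\; A\boxtimes B^{2},$$
and then invoke the bilinearity of the Kronecker product ({Lemma~\ref{lem: Kronecker product is bilinear}}) to rearrange this to $A\boxtimes(B^{2}-B)=O$.

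The next step is to transport the equation into the tensor product. Since $\overline{f}\colon M_{m}(K)\otimes_{K}M_{n}(K)\to M_{mn}(K)$ is a $K$-linear isomorphism with $\overline{f}(A\otimes(B^{2}-B))=A\boxtimes(B^{2}-B)=O$, one has $A\otimes(B^{2}-B)=0$ in $M_{m}(K)\otimes_{K}M_{n}(K)$.

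The one substantive ingredient is a general vanishing property for simple tensors over a field: for $K$-vector spaces $U,V$, a simple tensor $u\otimes v$ equals $0$ in $U\otimes_{K}V$ if and only if $u=0$ or $v=0$. When $u\neq 0$, one chooses a $K$-linear functional $\phi\colon U\to K$ with $\phi(u)\neq 0$; the induced $K$-module homomorphism $\phi\otimes\mathrm{id}_{V}\colon U\otimes_{K}V\to V$, guaranteed by {Theorem~\ref{thm: R module homomorphism of tensors induced by a bilinear map}} applied to the bilinear map $(u',v')\mapsto\phi(u')v'$, sends $u\otimes v$ to $\phi(u)v$. This must vanish, forcing $v=0$. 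Applied with $u=A\neq O$ and $v=B^{2}-B$, we conclude $B^{2}=B$, proving the first assertion.

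The second assertion follows by the symmetric argument: if $B^{2}=B$ and $B\neq O$, the same chain of equalities produces $(A^{2}-A)\boxtimes B=O$, hence $(A^{2}-A)\otimes B=0$; choosing a functional $\psi\colon M_{n}(K)\to K$ with $\psi(B)\neq 0$ and applying $\mathrm{id}\otimes\psi$ yields $A^{2}-A=O$. The argument is essentially mechanical once the bilinearity of $\boxtimes$, the isomorphism $\overline{f}$, and the simple-tensor vanishing lemma are in hand; I do not anticipate a real obstacle.
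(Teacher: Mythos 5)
Your proposal is correct and follows essentially the same route as the paper: the chain $A\boxtimes B=(A\boxtimes B)^2=A^2\boxtimes B^2=A\boxtimes B^2$, bilinearity to get $A\boxtimes(B^2-B)=O$, and the fact that a nonzero $A$ forces $B^2-B=O$. The paper simply asserts this last vanishing property in the remark preceding the proposition (it also follows immediately from the block structure, since some entry $a_{ij}\neq O$ gives $a_{ij}(B^2-B)=O$), whereas you justify it via the isomorphism $\overline{f}$ and a linear functional --- a more elaborate but equally valid justification of the same step.
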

\begin{proof}
    Because $A$ and $A\boxtimes B$ are idempotents,
    \[
    A\boxtimes B=(A\boxtimes B)^2=A^2\boxtimes B^2=A\boxtimes B^2.
    \]
    Thus,
    \[
    O=A\boxtimes B-A\boxtimes B^2=A\boxtimes (B-B^2).
    \]
    Since $A\neq O$, we can conclude that $B-B^2=O$ and so $B$ is an idempotent.
\end{proof}

\begin{theorem}
    Let $A\in M_m(K)$ and $B\in M_n(K)$. Then $A\boxtimes B$ is a nonzero idempotent if and only if $A$ and $B$ are both nonzero and $A^2=kA$ and $B^2=k^{-1}B$ for some $k\in K^{\times}$.
\end{theorem}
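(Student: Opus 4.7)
The plan is to prove this by direct block-entrywise analysis of the Kronecker product, exploiting the block structure of $A \boxtimes B$ alongside the multiplicativity $(A\boxtimes B)(C\boxtimes D) = AC \boxtimes BD$ already established.

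For the ($\Leftarrow$) direction, I would simply compute
\[
(A\boxtimes B)^2 = A^2 \boxtimes B^2 = (kA) \boxtimes (k^{-1}B) = kk^{-1}(A\boxtimes B) = A \boxtimes B,
\]
where the last equality uses the $K$-bilinearity from Lemma~\ref{lem: Kronecker product is bilinear}. Since $A,B$ are both nonzero, $A\boxtimes B$ is nonzero as well.

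For the ($\Rightarrow$) direction, assume $A \boxtimes B$ is a nonzero idempotent. First, since $A\boxtimes B \ne O$, neither $A$ nor $B$ can be the zero matrix. Now expand the identity $(A\boxtimes B)^2 = A\boxtimes B$ using $A^2 \boxtimes B^2 = A \boxtimes B$. Writing $A = (a_{ij})$ and $A^2 = (a'_{ij})$, comparing the $(i,j)$-block on both sides yields
\[
a'_{ij} B^2 = a_{ij} B \quad \text{for all } 1 \le i,j \le m.
\]
Pick indices $(i_0,j_0)$ with $a_{i_0 j_0} \ne 0$, which exist since $A \ne O$. Then $a'_{i_0 j_0} B^2 = a_{i_0 j_0} B \ne O$, forcing $a'_{i_0 j_0} \ne 0$ as well. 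Set $k := a'_{i_0 j_0}/a_{i_0 j_0} \in K^\times$, so that $B^2 = k^{-1} B$. Substituting this back into $a'_{ij} B^2 = a_{ij} B$ gives $(k^{-1} a'_{ij} - a_{ij})B = O$ for all $i,j$; since $B \ne O$, each scalar coefficient must vanish, giving $a'_{ij} = k\, a_{ij}$, i.e.\ $A^2 = kA$, as required.

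The main conceptual step is the block-by-block comparison that reduces the single matrix equation $A^2 \boxtimes B^2 = A \boxtimes B$ to the family of scalar-times-matrix equations $a'_{ij}B^2 = a_{ij}B$; once this reduction is in place, the scalar $k$ essentially defines itself at any nonzero entry of $A$, and consistency across all other entries falls out automatically. The only subtle points are ruling out $a'_{i_0 j_0} = 0$ (which would force $B = O$) and verifying $k \in K^\times$, both of which are handled above.
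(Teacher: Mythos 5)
Your proof is correct and follows essentially the same route as the paper: multiplicativity of the Kronecker product reduces idempotency to the block equations $a'_{ij}B^2=a_{ij}B$, a suitable nonzero entry of $A$ pins down $k$, and the remaining entries then force $A^2=kA$. Your way of locating that entry (choosing $a_{i_0j_0}\neq 0$ and deducing $a'_{i_0j_0}\neq 0$ from $a_{i_0j_0}B\neq O$) is in fact slightly more direct than the paper's argument by contradiction, but the substance is identical.
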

\begin{proof}
    ($\Leftarrow$) Because $A\neq O$ and $B\neq O$, we know that $A\boxtimes B$ is nonzero.
    Also,
    \[
    (A\boxtimes B)^2=A^2\boxtimes B^2=kA\boxtimes k^{-1}B=A\boxtimes B.
    \]
    Thus, $A\boxtimes B$ is a nonzero idempotent in $M_{mn}(K)$.

    ($\Rightarrow$) Write $C=A^2$. Because $C\boxtimes B^2=A^2\boxtimes B^2=(A\boxtimes B)^2=A\boxtimes B$, we have
    \[
    \left(\begin{array}{cccc}
     c_{11}B^2&c_{12}B^2&\cdots&c_{1m}B^2  \\
     c_{21}B^2&c_{22}B^2&\cdots&c_{2m}B^2 \\
     \vdots&\vdots&\ddots&\vdots \\
     c_{m1}B^2&c_{m2}B^2&\cdots&c_{mm}B^2
     \end{array}\right)=
     \left(\begin{array}{cccc}
     a_{11}B&a_{12}B&\cdots&a_{1m}B  \\
     a_{21}B&a_{22}B&\cdots&a_{2m}B \\
     \vdots&\vdots&\ddots&\vdots \\
     a_{m1}B&a_{m2}B&\cdots&a_{mm}B
     \end{array}\right)
    \]
    We claim that there exists $(i,j)$ such that $a_{ij}$ and $c_{ij}$ are both nonzero.
    Suppose not. That is, either $a_{ij}=0$ or $c_{ij}=0$ for all $1\le i,j\le n$.
    Because $A\boxtimes B$ is a nonzero idempotent, we have $A^2\boxtimes B^2\neq O$ and so $A^2\neq O$ and $B^2\neq O$.
    Because $A\neq O$ and $C=A^2\neq O$ and $a_{ij}B=c_{ij}B^2$ for all $(i,j)$, we know that either $B=O$ or $B^2=O$. Then there is a contradiction.
    Therefore, there exists $(i,j)$ such that $a_{ij}\neq 0$ and $c_{ij}\neq 0$.

    Let $k=c_{ij}/a_{ij}$. Because $a_{ij}B=c_{ij}B^2$, we have $B^2=k^{-1}B$.
    Also, we have
    \[
    A\boxtimes B=(A\boxtimes B)^2=A^2\boxtimes B^2=A^2\boxtimes k^{-1} B=k^{-1}A^2\boxtimes B.
    \]
    Thus, we know that
    \[
    O=A\boxtimes B-k^{-1}A^2\boxtimes B=(A-k^{-1}A^2)\boxtimes B.
    \]
    Since $B\neq O$, by the discussion before {Proposition~\ref{prop: the necessary condition for A tensor B to be idempotent}}, we know that $A-k^{-1}A^2=O$.
    Hence, $A^2=kA$.
\end{proof}

For a matrix $A$, we write the {\bf transpose of} $A$\index{transpose} to be $A^{\mathsf T}$.\index{$A^{\mathsf T}$: transpose of $A$} For the sake of the later discussion, we give the definition of anti-transpose.

\begin{definition}[{\cite[Definition~2.6]{antidiagonal}}]
    Let $A\in M_n(K)$. Define $^{\tau}A$\index{$^{\tau}A$: anti-transpose of $A$} to be the matrix taking transpose along the anti-diagonal. More precisely,
    \[
    {^{\tau}A}_{ij}=A_{n+1-j,n+1-i}.
    \]
    We say that $^{\tau}A$ is the {\bf anti-transpose of} $A$.\index{anti-transpose}
\end{definition}

From the remark after \cite[Definition~2.6]{antidiagonal}, we have the following lemma.

\begin{lemma}
    \label{lem: properties of exchange matrix}
    Let $J\in M_n(K)$ be the $n\times n$ exchange matrix
    \[
    J=\left(
    \begin{array}{ccccc}
         0&0&\cdots&0&1  \\
         0&0&\cdots&1&0  \\
         \vdots&\vdots&\reflectbox{$\ddots$}&\vdots&\vdots  \\
         0&1&\cdots&0&0  \\
         1&0&\cdots&0&0  
    \end{array}
    \right).
    \]
    i.e., the entries of $J$ satisfies $J_{ij}=\left\{\begin{array}{ll}
        1, &{\rm if}\ i+j=n+1 \\
        0, & {\rm otherwise}
    \end{array}\right.$.
    Then for any matrix $A\in M_n(K)$, we have
    \begin{enumerate}[(a)]
        \item $J^{-1}=J$
        \item $JAJ^{-1}=JAJ$ is the matrix rotate $\pi$ on $A$.
        \item $^{\tau}A=JA^{\mathsf T}J^{-1}=JA^{\mathsf T}J$.
        \item $(^{\tau}A)^{\mathsf T}={^{\tau}(A^{\mathsf T})}$
    \end{enumerate}
\end{lemma}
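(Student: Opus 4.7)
The plan is to prove all four parts by direct entrywise computation with the matrix $J$, using the defining property that $J_{ij}=1$ precisely when $i+j=n+1$. Since each statement reduces to manipulating indices of the form $n+1-i$, nothing conceptually new is needed; the only care required is in tracking subscripts.

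First I would handle (a) by computing $J^2$ directly. For fixed $i,k$, the $(i,k)$ entry of $J^2$ is $\sum_{j=1}^n J_{ij}J_{jk}$. The factor $J_{ij}$ forces $j=n+1-i$, and the factor $J_{jk}$ forces $j=n+1-k$. These two conditions are compatible iff $i=k$, in which case the entry equals $1$; otherwise it is $0$. Hence $J^2=I_n$, giving $J^{-1}=J$.

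Next I would observe the key fact that left-multiplication by $J$ reverses rows and right-multiplication by $J$ reverses columns. Formally, for any $A\in M_n(K)$,
\[
(JA)_{ij}=\sum_{k}J_{ik}A_{kj}=A_{n+1-i,\,j},\qquad (AJ)_{ij}=A_{i,\,n+1-j},
\]
so $(JAJ)_{ij}=A_{n+1-i,\,n+1-j}$. This is precisely the matrix obtained by rotating $A$ by $\pi$ about its center, proving (b) (and using (a) to identify $J^{-1}$ with $J$).

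For (c), I would apply (b) with $A$ replaced by $A^{\mathsf T}$: the $(i,j)$ entry of $JA^{\mathsf T}J$ is $(A^{\mathsf T})_{n+1-i,\,n+1-j}=A_{n+1-j,\,n+1-i}$, which matches the definition $({}^{\tau}A)_{ij}=A_{n+1-j,\,n+1-i}$. Finally, for (d), note that $J^{\mathsf T}=J$ (visible from the defining formula), so using (c) twice,
\[
({}^{\tau}A)^{\mathsf T}=(JA^{\mathsf T}J)^{\mathsf T}=J^{\mathsf T}A J^{\mathsf T}=JAJ={}^{\tau}(A^{\mathsf T}),
\]
where the last equality is (c) applied with $A^{\mathsf T}$ in place of $A$, since $(A^{\mathsf T})^{\mathsf T}=A$. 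There is no real obstacle here; the main discipline is writing out the indices carefully and invoking (a) and (b) in the right order so that (c) and (d) become one-line consequences.
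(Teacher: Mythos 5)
Your proposal is correct and follows essentially the same route as the paper: both rest on the observation that left multiplication by $J$ reverses rows and right multiplication reverses columns, then deduce (b)--(d) from that together with $J^{\mathsf T}=J$ and $J^2=I_n$. The only difference is cosmetic — you carry out the computations in index notation $\bigl((JAJ)_{ij}=A_{n+1-i,\,n+1-j}\bigr)$ while the paper displays the full matrices — and your treatment of (d) matches the paper's use of part (c) applied to $A^{\mathsf T}$.
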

\begin{proof}
    The main idea is that given a matrix $A\in M_n(K)$ the left multiplication of $J$ on $A$ i.e., $JA$ is just reverse the order of rows of $A$, and the right multiplication of $J$ on $A$ i.e., $AJ$ is just reverse the order of columns of $A$.
    \begin{enumerate}[(a)]
        \item Let $B=\left(\begin{array}{cccc}
             {\bf b}_1&{\bf b}_2&\cdots&{\bf b}_n   
        \end{array}\right)$, then $BJ=\left(\begin{array}{cccc}
             {\bf b}_n&\cdots&{\bf b}_2 &{\bf b}_1   
        \end{array}\right)$.
        \\
        Thus, $J^2=\left(\begin{array}{cccc}
             {\bf e}_n&\cdots& {\bf e}_2 &{\bf e}_1   
        \end{array}\right)J=\left(\begin{array}{cccc}
             {\bf e}_1&{\bf e}_2&\cdots&{\bf e}_n   
        \end{array}\right)=I_n$ and so $J^{-1}=J$.
        \item Let $A=(a_{ij})$, then
        \[
        \begin{array}{ll}
        J\left(\begin{array}{cccc}
            a_{11} & a_{12} &\cdots &a_{1n} \\
            a_{21} & a_{22} &\cdots&a_{2n} \\
            \vdots&\vdots&\ddots&\vdots\\
            a_{n1} & a_{n2} &\cdots &a_{nn}
        \end{array}\right)J
             & =\left(\begin{array}{cccc}
            a_{n1} & a_{n2} &\cdots &a_{nn} \\
            \vdots & \vdots & \reflectbox{$\ddots$} &\vdots \\
            a_{21}&a_{22}&\cdots& a_{2n}\\
            a_{11} &a_{12} &\cdots &a_{1n}
        \end{array}\right)J\\
             & =\left(\begin{array}{cccc}
            a_{nn} & \cdots &a_{n2} &a_{n1} \\
            \vdots&\ddots&\vdots&\vdots\\
            a_{2n} & \cdots & a_{22} &a_{21} \\
            a_{1n} &\cdots & a_{12} &a_{11}
        \end{array}\right)
        \end{array}.
        \]
        \item Let $A=(a_{ij})$, then
        \[
        {^{\tau}A}=\left(\begin{array}{cccc}
             a_{nn}&\cdots&a_{2n}&a_{1n}  \\
             \vdots&\ddots&\vdots&\vdots  \\
             a_{n2}&\cdots&a_{22}&a_{12}  \\
             a_{n1}&\cdots&a_{21}&a_{11}
        \end{array}\right).
        \]
        On the other hand, by applying part (b), we know that $JA^{\mathsf T}J$ is
        \[
        J\left(\begin{array}{cccc}
            a_{11} & a_{21} &\cdots &a_{n1} \\
            a_{12} & a_{22} &\cdots&a_{n2} \\
            \vdots&\vdots&\ddots&\vdots\\
            a_{1n} & a_{2n} &\cdots &a_{nn}
        \end{array}\right)J
        =
        \left(\begin{array}{cccc}
             a_{nn}&\cdots&a_{2n}&a_{1n}  \\
             \vdots&\ddots&\vdots&\vdots  \\
             a_{n2}&\cdots&a_{22}&a_{12}  \\
             a_{n1}&\cdots&a_{21}&a_{11}
        \end{array}\right).
        \]
        Therefore, ${^{\tau}A}=JA^{\mathsf T}J$.
        \item $({^{\tau}A})^{\mathsf T}=(JA^{\mathsf T}J)^{\mathsf T}=J^{\mathsf T}(A^{\mathsf T})^{\mathsf T}J^{\mathsf T}=J(A^{\mathsf T})^{\mathsf T}J={^{\tau}(A^{\mathsf T})}$.
        The first and the last equations come from part (c).
    \end{enumerate}
\end{proof}

\begin{corollary}
    \label{cor: transposition reverse the multiplicative order}
    Let $A,B\in M_n(K)$.
    The anti-transpose has an analogue to the transpose that $(AB)^{\mathsf T}=B^{\mathsf T}A^{\mathsf T}$ as follows:
    \[
    {^{\tau}(AB)}=(^{\tau}B)(^{\tau}A).
    \]
\end{corollary}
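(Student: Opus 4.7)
The plan is to exploit the identity ${}^{\tau}X = JX^{\mathsf T}J$ established in part (c) of Lemma~\ref{lem: properties of exchange matrix}, together with the involution property $J^{2}=I_{n}$ from part (a), to reduce the claim to the familiar transposition identity $(AB)^{\mathsf T}=B^{\mathsf T}A^{\mathsf T}$.

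Concretely, I would start from the left-hand side and compute
\[
{}^{\tau}(AB) \;=\; J(AB)^{\mathsf T}J \;=\; J\bigl(B^{\mathsf T}A^{\mathsf T}\bigr)J.
\]
The key trick is to insert $I_{n}=J^{2}=J\cdot J$ between $B^{\mathsf T}$ and $A^{\mathsf T}$, yielding
\[
J B^{\mathsf T}A^{\mathsf T}J \;=\; (JB^{\mathsf T}J)(JA^{\mathsf T}J) \;=\; ({}^{\tau}B)({}^{\tau}A),
\]
where the last equality is again part (c) of the lemma. This chain of equalities is the whole proof.

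There is essentially no obstacle here: every step is a direct application of a previously established identity from Lemma~\ref{lem: properties of exchange matrix}, and the only substantive manipulation is the insertion of $J^{2}=I_{n}$, which is the standard device for converting transpose-like involutions that are conjugate by $J$. So the main thing to be careful about is simply book-keeping — making sure each occurrence of $J$ is matched correctly and that the associativity $(JB^{\mathsf T})(JA^{\mathsf T})J$ versus $JB^{\mathsf T}(JJ)A^{\mathsf T}J$ is spelled out clearly for the reader.
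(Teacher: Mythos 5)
Your proposal is correct and follows exactly the paper's own argument: both apply $^{\tau}X = JX^{\mathsf T}J$ from Lemma~\ref{lem: properties of exchange matrix}(c), use $(AB)^{\mathsf T}=B^{\mathsf T}A^{\mathsf T}$, and insert $J^{2}=I_n$ (equivalently $J^{-1}J$) between the factors. Nothing further is needed.
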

\begin{proof}
    By {Lemma~\ref{lem: properties of exchange matrix}} (c), we obtain ${^{\tau}(AB)}=J(AB)^{\mathsf T}J^{-1}=JB^{\mathsf T}A^{\mathsf T}J^{-1}=JB^{\mathsf T}J^{-1}JA^{\mathsf T}J^{-1}=(^{\tau}B)(^{\tau}A)$.
\end{proof}

\begin{corollary}
    \label{cor: transpose preserve idempotent}
    Let $E\in M_n(K)$, then the following are equivalent. 
    \begin{enumerate}[(i)]
        \item $E$ is an idempotent
        \item $E^{\mathsf T}$ is an idempotent
        \item ${^{\tau}E}$ is an idempotent
        \item $(^{\tau}E)^{\mathsf T}={^{\tau}(E^{\mathsf T})}$ is an idempotent.
    \end{enumerate}
\end{corollary}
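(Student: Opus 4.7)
The plan is to prove the four conditions equivalent by establishing two independent bi-implications, namely (i)$\Leftrightarrow$(ii) and (i)$\Leftrightarrow$(iii), and then deriving (iii)$\Leftrightarrow$(iv) as an instance of the first one applied to ${^{\tau}E}$. Both of the basic bi-implications rest on a single observation: the maps $A\mapsto A^{\mathsf T}$ and $A\mapsto {^{\tau}A}$ are involutions on $M_n(K)$ that are anti-multiplicative, so they send $E^2$ to a product of two copies of the image of $E$.

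First I would handle (i)$\Leftrightarrow$(ii) with the classical identity $(E^{\mathsf T})^2=(E^2)^{\mathsf T}$. Since transposition is an involution, $E^2=E$ is equivalent to $(E^2)^{\mathsf T}=E^{\mathsf T}$, which rearranges to $(E^{\mathsf T})^2=E^{\mathsf T}$. Next I would prove (i)$\Leftrightarrow$(iii) using Corollary~\ref{cor: transposition reverse the multiplicative order}: one has
\[
({^{\tau}E})^2 = ({^{\tau}E})({^{\tau}E}) = {^{\tau}(E\cdot E)} = {^{\tau}(E^2)}.
\]
Because the anti-transpose is also an involution (by Lemma~\ref{lem: properties of exchange matrix}(c) together with $J^{-1}=J$, the assignment $A\mapsto JA^{\mathsf T}J$ is its own inverse), $E^2=E$ holds if and only if ${^{\tau}(E^2)}={^{\tau}E}$, which by the display above is exactly $({^{\tau}E})^2={^{\tau}E}$.

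Finally, to get (iii)$\Leftrightarrow$(iv), I would apply the already-established equivalence (i)$\Leftrightarrow$(ii) to the matrix ${^{\tau}E}$ in place of $E$: this immediately yields that ${^{\tau}E}$ is idempotent if and only if $({^{\tau}E})^{\mathsf T}$ is idempotent. The equality $({^{\tau}E})^{\mathsf T}={^{\tau}(E^{\mathsf T})}$ quoted in (iv) is Lemma~\ref{lem: properties of exchange matrix}(d), so no extra work is needed to identify the two matrices in question.

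I do not expect any serious obstacle; the entire argument is a formal consequence of the two structural facts already proved, namely that both $(\cdot)^{\mathsf T}$ and ${^{\tau}(\cdot)}$ are involutions on $M_n(K)$ that reverse products. The only point that deserves a brief mention in the write-up is why $E^2=E \Leftrightarrow {^{\tau}(E^2)}={^{\tau}E}$, since this uses injectivity of the anti-transpose; I would flag this by invoking part (a) of Lemma~\ref{lem: properties of exchange matrix} to conclude that ${^{\tau}(\cdot)}$ is a bijection.
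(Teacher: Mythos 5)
Your proposal is correct and rests on exactly the same two facts the paper uses, namely that $(\cdot)^{\mathsf T}$ and ${}^{\tau}(\cdot)$ are product-reversing involutions, so that $(E^{\mathsf T})^2=(E^2)^{\mathsf T}$ and $({}^{\tau}E)^2={}^{\tau}(E^2)$; the paper merely chains the implications in a slightly different order (proving (i)$\Rightarrow$(ii), (i)$\Rightarrow$(iii) and then closing the cycle by applying the transpose or anti-transpose once more). This is the same argument in substance.
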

\begin{proof}
    It is sufficient to show the following argument: if $E$ is an idempotent, then $E^{\mathsf T}$ and $^{\tau}E$ are idempotents.
    Because $E$ is an idempotent, $E^2=E$. By {Corollary~\ref{cor: transposition reverse the multiplicative order}} we have $(E^2)^{\mathsf T}=(E^{\mathsf T})^2$ and ${^{\tau}(E^2)}=({^{\tau}E})^2$.
    Thus, $(E^{\mathsf T})^2=(E^2)^{\mathsf T}=E^{\mathsf T}$ and $({^{\tau}E})^2={^{\tau}(E^2)}={^{\tau}E}$.
    That is, we have shown $(i)\Rightarrow (ii)$ and $(i)\Rightarrow (iii)$.

    $(ii)\Rightarrow (i)$ and $(iii)\Rightarrow (iv)$: Just apply transpose of the both side again.

    $(iv)\Rightarrow (ii)$: Just apply anti-transpose of the both side again.
\end{proof}

\section{As an Affine Algebraic Variety}
\label{sec: As an Affine Algebraic Variety}

Let $\{F_i\}_{\in I}$ be a collection of polynomials in $K[x_1,...,x_n]$ for a field $K$. The set of common zeros of all $F_i$ is called an {\bf affine algebraic variety}, denoted as ${\mathbb V}(\{F_i\}_{i\in I})$. Specifically, 
\[
{\mathbb V}(\{F_i\}_{i\in I})=\{(a_1,...,a_n)\in K^n\mid F_i((a_1,...,a_n))=0,\ {\textrm {for all}}\ i\in I\}.
\]
If $X=(x_{ij})$ is an $n \times n$ matrix, then to solve the matrix equation $X^2=X$ is equivalent to finding common zero set of the polynomials $F_{ij}=(\sum_{k}x_{ik}x_{kj})-x_{ij}$ for $1\le i,j\le n$. Thus, one can identify ${\mathscr I}(M_n(K))$ as the variety ${\mathbb V}(\{F_{ij}\mid 1\le i,j\le n\})$ by viewing $M_n(K)$ as $K^{n^2}$. As a convention, we write $K^n$ as ${\mathbb A}^n$, which is also called an {\bf affine} $n$-{\bf space}.

In this section, $K$ is assumed to be an algebraically closed field. A range of the dimension of $M_n(K)$ as an affine algebraic variety will be discussed.

Let $X=(x_{ij})\in M_n(K)$. We can consider the affine algebraic variety defined by the polynomial $\operatorname{tr}(X)-r=(\sum_{j=1}^{n}x_{jj})-r$ i.e., 
\[
{\mathcal V}_r:={\mathbb V}(\operatorname{tr}(X)-r)=\{X\in M_n(K)\mid \operatorname{tr}(X)=r\}.
\]
We first assume that $\operatorname{char}(K)=0$.
Note that if $E$ is an idempotent matrix, then we have $\operatorname{tr}(E)=\operatorname{rank}(E)$.
Thus, for $r\in\{0,1,...,n\}$,
\[
{\mathscr I}(M_n(K))\cap {\mathcal V}_r=\{E\in M_n(K)\mid \operatorname{rank}(E)=r\}
\]
is an affine algebraic variety.
Moreover, ${\mathscr I}(M_n(K))$ is the union of proper subvarieties ${\mathscr I}(M_n(K))\cap {\mathcal V}_r$ for $0 \leq r \leq n$. 
On the other hand, if $\operatorname{char}(K)=p\neq 0$, then $\operatorname{tr}(E)\equiv\operatorname{rank}(E)\ (\operatorname{mod}p)$ and so
\[
{\mathscr I}(M_n(K))\cap {\mathcal V}_r=\{E\in M_n(K)\mid \operatorname{rank}(E)\equiv r\ (\operatorname{mod}p)\}.
\]
Moreover, ${\mathscr I}(M_n(K))$ is the union of proper subvarieties ${\mathscr I}(M_n(K))\cap {\mathcal V}_r$ for $r \in \mathbb{N}$. 

We have the following immediate observation, for which the definition of an irreducible variety can be found in {\cite[page 12]{AnInvitationToAlgebraicGeometry}}.
%One has the following observation, for which the definition of an irreducible variety can be found in {\cite[page 12]{AnInvitationToAlgebraicGeometry}}.

\begin{proposition}
    ${\mathscr I}(M_n(K))$ is not an irreducible variety.
    %For the definition of irreducible, one may refer to \cite[Section~I.6.1]{BasicAlgebraicGeometry1}.
\end{proposition}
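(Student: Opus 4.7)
The plan is to prove reducibility directly from the trace-stratification already laid out in the paragraph preceding the proposition. Recall that an affine algebraic variety is reducible precisely when it can be written as a union of two proper closed subvarieties, and equivalently, when it is a finite union of proper closed subvarieties at all. The decomposition
\[
\mathscr{I}(M_n(K)) = \bigcup_{r} \bigl( \mathscr{I}(M_n(K)) \cap \mathcal{V}_r \bigr)
\]
(indexed by $r \in \{0,1,\ldots,n\}$ in characteristic $0$, or by $r \in \mathbb{N}$ in characteristic $p$) already expresses the variety as a finite union of closed subsets, since each piece is the intersection of two closed subsets of $\mathbb{A}^{n^2}$. Thus it only remains to show that at least two of these pieces are nonempty and that no single piece exhausts $\mathscr{I}(M_n(K))$.

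The key step is therefore to exhibit two idempotents with distinct traces. By Corollary~\ref{cor: all the idempotent of rank t}, for each $0 \le \ell \le n$ the diagonal matrix $D_\ell$ with $\ell$ leading ones is an idempotent of rank $\ell$ and hence of trace $\ell$. In particular $O_n$ (trace $0$) and $I_n$ (trace $n$, which is nonzero in $K$ at least when $n$ is not a multiple of the characteristic; otherwise replace $I_n$ by $D_1$ of trace $1$) are idempotents lying in distinct strata $\mathcal{V}_{r}$. This immediately shows that at least two of the pieces $\mathscr{I}(M_n(K)) \cap \mathcal{V}_r$ are nonempty. Grouping the decomposition as
\[
\mathscr{I}(M_n(K)) = \bigl( \mathscr{I}(M_n(K)) \cap \mathcal{V}_0 \bigr) \cup \Bigl( \bigcup_{r \ne 0} \mathscr{I}(M_n(K)) \cap \mathcal{V}_r \Bigr),
\]
both summands are closed (finite unions of closed sets). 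The first contains $O_n$ but not $D_1$; the second contains $D_1$ but not $O_n$. Hence neither summand equals the whole $\mathscr{I}(M_n(K))$, so $\mathscr{I}(M_n(K))$ is reducible.

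There is no real obstacle here: the entire substance of the argument is the observation, already made in the preceding paragraph, that trace is a regular function on $M_n(K)$ which, when restricted to $\mathscr{I}(M_n(K))$, takes only finitely many values and is nonconstant. The only mild subtlety is the characteristic~$p$ case, where different ranks may collapse to the same trace class; this is handled simply by noting that even in that case the traces $0$ and $1$ are still distinct elements of $K$, so the strata indexed by $r=0$ and $r=1$ remain separate and nonempty.
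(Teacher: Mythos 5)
Your proof is correct and follows essentially the same route as the paper: decomposing $\mathscr{I}(M_n(K))$ into the finitely many proper closed trace strata $\mathscr{I}(M_n(K))\cap\mathcal{V}_r$ and grouping them into two proper closed pieces, witnessed as proper by the idempotents $O_n$ and $D_1$ of traces $0$ and $1$. The only nitpick is that in characteristic $p$ the second piece should be indexed by the distinct trace values in $K$ (equivalently by residue classes $r\not\equiv 0\pmod{p}$, which is exactly the grouping the paper uses) rather than by all nonzero integers $r$, since otherwise the stratum for $r=p$ coincides with $\mathcal{V}_0$ and the second piece would contain $O_n$ and in fact exhaust the variety; your closing remark shows you are aware of this collapse, and the repair is immediate.
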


\begin{comment}
\begin{proof}
    For the case $\operatorname{char}(K)=0$:
    Because ${\mathscr I}(M_n(K))=\bigcup_{r=0}^{n}({\mathscr I}(M_n(K))\cap V_r)$ and ${\mathscr I}(M_n(K))\cap V_r\neq {\mathscr I}(M_n(K))$, we know that ${\mathscr I}(M_n(K))$ is not irreducible.

    For the case $\operatorname{char}(K)=p\neq0$: Write 
    \[
    {\mathscr I}(M_n(K))=\left(\bigcup_{r\in {\mathbb N},\ r\equiv 0\ (\operatorname{mod}p)}({\mathscr I}(M_n(K))\cap V_r)\right)\cup\left(\bigcup_{r\in{\mathbb N}, \ r\not\equiv 0\ (\operatorname{mod}p)}({\mathscr I}(M_n(K))\cap V_r)\right).
    \]
\end{proof}
\end{comment}

\begin{definition}[{\cite[page 12]{AnInvitationToAlgebraicGeometry}}]
    Let $V\subseteq {\mathbb A}^n$ be an affine algebraic variety.
    The {\bf {dimension of}} $V$ is defined to be the length $d$ of the longest possible chain of proper irreducible subvarieties of $V$,
    \[
    V\supseteq V_d\supsetneq V_{d-1}\supsetneq\cdots\supsetneq V_1\supsetneq V_0
    \]
    where $V_0,V_1,...,V_d$ are irreducible subvarieties of $V$.
\end{definition}

%Note that there are several equivalent definitions of dimension.
%The concept of the dimension of an affine algebraic variety can be found in \cite[Section~1.6.1]{BasicAlgebraicGeometry1} and \cite[Section~1.4]{AnInvitationToAlgebraicGeometry}. 
The following question then arises naturally: What is the dimension of the affine algebraic variety $\mathscr{I}(M_n(K))$?
At present, we are unable to provide an exact value; however, a range can be specified.

The following lemma can be found in \cite[Theorem~1.19]{BasicAlgebraicGeometry1}. However, the definition of dimension in Shafarevich's book %\cite{BasicAlgebraicGeometry1} 
is different from the one above, although they are equivalent. We provide a proof here for convenience.

\begin{lemma}
    Let $X,Y\subseteq {\mathbb A}^n$ be affine algebraic varieties. If $X\subseteq Y$, then $\dim(X)\le \dim(Y)$. If $Y$ is irreducible and $X\subseteq Y$ with $\dim(X)=\dim(Y)$, then $X=Y$.
\end{lemma}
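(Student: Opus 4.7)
The plan is to argue directly from the chain-length definition of dimension, without invoking any deeper algebraic-geometry machinery.

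For the first assertion, I would observe that any strictly ascending chain
\[
V_0 \subsetneq V_1 \subsetneq \cdots \subsetneq V_d
\]
of irreducible subvarieties with $V_d \subseteq X$ is automatically a chain of irreducible subvarieties with $V_d \subseteq Y$, since each $V_i$ is a closed irreducible subset of $\mathbb{A}^n$ and the inclusion $X \subseteq Y$ carries the whole chain into $Y$. Choosing $d = \dim(X)$ then exhibits a chain of length $d$ inside $Y$, so $\dim(Y) \ge \dim(X)$.

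For the second assertion, I would argue by contradiction. Assume $Y$ is irreducible, $X \subseteq Y$, and $\dim(X) = \dim(Y) = d$, but $X \neq Y$, so $X \subsetneq Y$. Pick a maximal chain $V_0 \subsetneq V_1 \subsetneq \cdots \subsetneq V_d$ of irreducible subvarieties contained in $X$ that realizes $\dim(X) = d$. Because $V_d \subseteq X \subsetneq Y$, any $y \in Y \setminus X$ lies in $Y \setminus V_d$, so the inclusion $V_d \subsetneq Y$ is strict. Since $Y$ is itself irreducible, appending it to the top yields
\[
V_0 \subsetneq V_1 \subsetneq \cdots \subsetneq V_d \subsetneq Y,
\]
a chain of irreducible subvarieties of $Y$ of length $d+1$. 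This contradicts $\dim(Y) = d$, so $X = Y$.

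The key, and essentially only delicate, step is the final extension. It uses both hypotheses in a crucial way: irreducibility of $Y$ is exactly what allows $Y$ itself to serve as the top link of a chain inside $Y$, while the strict inclusion $X \subsetneq Y$ is what guarantees that $V_d \subsetneq Y$ is strict so that the chain actually gains length. If either hypothesis were dropped, this extension argument would collapse; but under the stated assumptions the proof is otherwise a formal consequence of the definition, with no Noetherian or dimension-theoretic machinery needed.
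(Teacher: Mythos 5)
Your proof is correct and follows essentially the same route as the paper: a chain of irreducible subvarieties inside $X$ is already a chain inside $Y$, which gives the inequality, and when $Y$ is irreducible and $X\subsetneq Y$ the top link $V_{\dim(X)}\subsetneq Y$ lets you append $Y$ itself to lengthen the chain, forcing $\dim(Y)\ge\dim(X)+1$. The only difference is presentational: you phrase the second part as a contradiction, while the paper states the contrapositive directly.
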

\begin{proof}
    %Because our definition of the dimension of a variety is different from \cite{BasicAlgebraicGeometry1}, we give an alternative proof here.

    Because $X\subseteq Y$, we can find irreducible subvarieties $V_0,V_1,...,V_{\dim(X)}$ of $Y$ such that
    \[
    Y\supseteq X\supseteq V_{\dim(X)}\supsetneq V_{\dim(X)-1}\supsetneq\cdots\supsetneq V_1\supsetneq V_0.
    \]
    Thus, $\dim(Y)\ge \dim(X)$.
    If $Y$ is irreducible and $X \subsetneq Y$, then $V_{\dim(X)} \subsetneq Y$ and $\dim(Y)\ge \dim(X)+1$ by definition. This proves the second statement.
%    For the remaining: Suppose $X\subsetneq Y$. Then we have
%    \[
%    Y\supsetneq V_{\dim(X)}\supsetneq V_{\dim(X)-1}\supsetneq\cdots\supsetneq V_1\supsetneq V_0.
%    \]
%    Since $Y$ is irreducible, we know that $\dim(Y)\ge \dim(X)+1$. There is a contradiction. Hence, $X=Y$.
\end{proof}

Note that $\dim({\mathbb A}^n)=n$ (\cite[Example~1.30]{BasicAlgebraicGeometry1}).
If $V$ and $W$ are isomorphic as affine algebraic varieties, then $\dim(V)=\dim(W)$.
For the definition of a morphism and an isomorphism between two affine algebraic varieties, see {\cite[page 10]{AnInvitationToAlgebraicGeometry}}.
The following result seems to be known to experts, but we could not find explicitly in the literature.

\begin{lemma}
    \label{lem: variety dimesion of a vector space}
%    Let $V\subseteq K^n={\mathbb A}^n$ be a vector subspace of dimension $d$. Then $V$ has affine algebraic variety dimension $d$.
    Let $V\subseteq {\mathbb A}^n$ be a subspace over $K$. Then the dimension of $V$ as a vector space coincides the dimension of $V$ as an affine algebraic variety.
\end{lemma}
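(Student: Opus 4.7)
The plan is to reduce to the special case $V = K^d \times \{0\}^{n-d}$ by a linear change of coordinates, and then invoke the two facts already recorded in the paper: that $\dim(\mathbb{A}^d) = d$ and that isomorphic affine algebraic varieties have the same dimension.

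First I would set $d = \dim_K(V)$, pick a vector space basis $\{{\bf v}_1,\ldots,{\bf v}_d\}$ of $V$, and extend it to a basis $\{{\bf v}_1,\ldots,{\bf v}_n\}$ of $K^n$. Let $A \in M_n(K)^{\times}$ be the matrix whose columns are ${\bf v}_1,\ldots,{\bf v}_n$. The map
\[
\phi : \mathbb{A}^n \longrightarrow \mathbb{A}^n, \qquad \phi({\bf x}) = A{\bf x}
\]
is a morphism of affine algebraic varieties, since each coordinate of $A{\bf x}$ is a polynomial (in fact linear) in the coordinates of ${\bf x}$. Its inverse $\phi^{-1}({\bf y}) = A^{-1}{\bf y}$ is also a morphism, so $\phi$ is an isomorphism of affine algebraic varieties.

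Next I would observe that $\phi$ sends the coordinate subspace $W := K^d \times \{0\}^{n-d} \subseteq \mathbb{A}^n$ bijectively onto $V$. Indeed, $\phi\bigl((a_1,\ldots,a_d,0,\ldots,0)^{\mathsf T}\bigr) = \sum_{i=1}^{d} a_i {\bf v}_i$, and as $(a_1,\ldots,a_d)$ ranges over $K^d$ this traces out exactly $V$. Restricting $\phi$ to $W$ therefore gives an isomorphism of affine varieties $W \simeq V$. The variety $W$ is in turn isomorphic to $\mathbb{A}^d$ via the projection onto the first $d$ coordinates (with the inverse given by padding with zeros). Combining these two isomorphisms yields $V \simeq \mathbb{A}^d$ as affine algebraic varieties, and hence $\dim_{\text{var}}(V) = \dim_{\text{var}}(\mathbb{A}^d) = d$ by the cited properties.

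I do not expect a significant obstacle here: the only subtlety is checking that both $\phi$ and the projection $W \to \mathbb{A}^d$ are morphisms of affine varieties with polynomial inverses, which is immediate because linear (and coordinate-projection/insertion) maps are defined by polynomials. No appeal to deeper algebraic-geometric machinery such as transcendence degree or Krull dimension is needed, since the definition of variety dimension as the length of a maximal chain of irreducible subvarieties is preserved under isomorphism of varieties.
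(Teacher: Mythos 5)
Your proposal is correct, and it reaches the same destination as the paper — an isomorphism of affine varieties $V\simeq {\mathbb A}^d$, after which $\dim(V)=\dim({\mathbb A}^d)=d$ follows from the cited facts — but by a slightly different route. The paper works intrinsically with $V$: it defines $\psi:{\mathbb A}^d\to V$ by $(x_1,\ldots,x_d)\mapsto \sum x_i{\bf v}_i$ and then must verify that the inverse coordinate map $\phi:V\to{\mathbb A}^d$ is a morphism, which it does by selecting $d$ linearly independent rows of the basis matrix $B$ and inverting the resulting $d\times d$ submatrix to express the coordinates $c_i$ as (linear) polynomials in the ambient coordinates. You instead extend the basis of $V$ to a basis of $K^n$, conjugate by the resulting linear automorphism of ${\mathbb A}^n$ to carry the coordinate subspace $K^d\times\{0\}^{n-d}$ onto $V$, and then project. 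Your version buys a cleaner inverse — it comes for free as multiplication by $A^{-1}$, so no row-selection argument is needed — at the modest cost of having to note that an automorphism of ${\mathbb A}^n$ restricts to an isomorphism from a closed subvariety onto its image. Both arguments are complete and elementary; there is no gap in yours.
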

\begin{proof}
    Let $d$ be the dimension of $V$ as a vector space. We need to find morphisms $\phi:V\to {\mathbb A}^d$ and $\psi:{\mathbb A}^d\to V$ such that $\psi\circ\phi=\operatorname{id}_V$ and $\phi\circ\psi=\operatorname{id}_{{\mathbb A}^d}$.
    Let $\{{\bf v}_1,...,{\bf v}_d\}$ be a basis for $V$ where ${\bf v}_i=(v_{1i},v_{2i},...,v_{ni})^{\mathsf T}$.
    Define $\psi:{\mathbb A}^d\to V$ by
    \[
    \psi(\left(\begin{array}{c}
          x_1 \\
          x_2 \\
          \vdots \\
          x_d 
    \end{array}\right))=\left(\begin{array}{c}
          \sum_{j=1}^{d}v_{1j}x_j \\
          \sum_{j=1}^{d}v_{2j}x_j \\
          \vdots \\
          \sum_{j=1}^{d}v_{nj}x_j 
    \end{array}\right)=x_1{\bf v}_1+\cdots+x_d{\bf v}_d.
    \]
    It is clear that $\psi$ is a morphism.
    Because $\{{\bf v}_1,...,{\bf v}_d\}$ is a basis for $V$, for any ${\bf y}=(y_1,...,y_n)^{\mathsf T}\in V$, there exist unique $c_1,...,c_d\in K$ such that ${\bf y}=c_1{\bf v}_1+\cdots+c_d{\bf v}_d$.
    Thus, the map $\phi:V\to {\mathbb A}^d$ defined by $\phi({\bf y})=(c_1,...,c_d)^{\mathsf T}$ is a well-defined function.
    We need to show that $c_i$ can be written as a polynomial in $y_1,...,y_n$.
    Consider the matrix
    \[
    B=\left(\begin{array}{cccc}
         {\bf v}_1&{\bf v}_2&\cdots&{\bf v}_d  
    \end{array}\right)=
    \left(\begin{array}{cccc}
         v_{11}&v_{12}&\cdots&v_{1d}  \\
         v_{21}&v_{22}&\cdots&v_{2d}  \\
         \vdots&\vdots&\ddots&\vdots  \\
         v_{n1}&v_{n2}&\cdots&v_{nd}
    \end{array}\right).
    \]
    Since $\operatorname{rank}(B)=d$, we can choose $d$ linearly independent rows of $B$ to form a $d\times d$ invertible matrix
    \[
    C=\left(\begin{array}{cccc}
         v_{i_11}&v_{i_12}&\cdots&v_{i_1d}  \\
         v_{i_21}&v_{i_22}&\cdots&v_{i_2d}  \\
         \vdots&\vdots&\ddots&\vdots  \\
         v_{i_d1}&v_{i_d2}&\cdots&v_{i_dd}
    \end{array}\right).
    \]
    Then we have
    \[
    \left(\begin{array}{c}
         y_{i_1}  \\
         y_{i_2}  \\
         \vdots  \\
         y_{i_d}  
    \end{array}\right)=
    \left(\begin{array}{cccc}
         v_{i_11}&v_{i_12}&\cdots&v_{i_1d}  \\
         v_{i_21}&v_{i_22}&\cdots&v_{i_2d}  \\
         \vdots&\vdots&\ddots&\vdots  \\
         v_{i_d1}&v_{i_d2}&\cdots&v_{i_dd}
    \end{array}\right)
    \left(\begin{array}{c}
         c_{1}  \\
         c_{2}  \\
         \vdots  \\
         c_{d}  
    \end{array}\right)
    \]
    and so
    \[
    \left(\begin{array}{c}
         c_{1}  \\
         c_{2}  \\
         \vdots  \\
         c_{d}  
    \end{array}\right)=C^{-1}
    \left(\begin{array}{c}
         y_{i_1}  \\
         y_{i_2}  \\
         \vdots  \\
         y_{i_d}  
    \end{array}\right).
    \]
    Therefore, $c_i$ can be written as a polynomial in $y_1,...,y_n$ and so $\phi$ is a morphism.
    It is clear that $\psi\circ\phi=\operatorname{id}_V$ and $\phi\circ\psi=\operatorname{id}_{{\mathbb A}^d}$.
    Hence, $V\simeq {\mathbb A}^d$ and so $\dim(V)=d$.
\end{proof}

\begin{lemma}
    For $r\in\{0,1,...,n\}$, ${\mathcal V}_r$ is irreducible. Futhermore, $\dim({\mathcal V}_r)=n^2-1$.
\end{lemma}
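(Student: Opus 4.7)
The plan is to prove both claims simultaneously by exhibiting an explicit isomorphism of affine algebraic varieties $\mathcal{V}_r \simeq \mathbb{A}^{n^2-1}$. Since $\mathcal{V}_r$ is cut out in $M_n(K) \cong \mathbb{A}^{n^2}$ by the single linear equation $\operatorname{tr}(X) - r = 0$, the idea is to solve for one coordinate in terms of the others.

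For the dimension, I would introduce the morphism $\psi : \mathbb{A}^{n^2-1} \to \mathcal{V}_r$ that sends a tuple indexed by the $n^2-1$ pairs $(i,j) \neq (n,n)$ to the matrix whose $(i,j)$-entry is $x_{ij}$ for $(i,j) \neq (n,n)$ and whose $(n,n)$-entry is $r - x_{11} - x_{22} - \cdots - x_{n-1,n-1}$. This is clearly a polynomial map into $\mathcal{V}_r$. Its inverse is the projection $\phi : \mathcal{V}_r \to \mathbb{A}^{n^2-1}$ that simply forgets the $(n,n)$-entry, which is also a polynomial map. A direct check shows $\psi \circ \phi = \operatorname{id}_{\mathcal{V}_r}$ and $\phi \circ \psi = \operatorname{id}_{\mathbb{A}^{n^2-1}}$. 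Hence $\mathcal{V}_r \simeq \mathbb{A}^{n^2-1}$, and by the standard computation $\dim(\mathbb{A}^{n^2-1}) = n^2 - 1$ (which can also be recovered from the preceding Lemma after translating $\mathcal{V}_r$ to the subspace $\mathcal{V}_0$ via $X \mapsto X - r E_{11}$), we obtain $\dim(\mathcal{V}_r) = n^2 - 1$.

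For irreducibility, I would argue directly from the defining polynomial: $f(X) = \operatorname{tr}(X) - r$ is a degree-one polynomial in $K[x_{11}, \ldots, x_{nn}]$, hence irreducible in this unique factorization domain. The principal ideal $(f)$ is therefore prime, so the coordinate ring $K[x_{11}, \ldots, x_{nn}]/(f)$ is an integral domain, which means $\mathcal{V}_r = \mathbb{V}(f)$ is an irreducible affine variety. Alternatively, irreducibility transfers along the isomorphism $\mathcal{V}_r \simeq \mathbb{A}^{n^2-1}$ from the standard fact that affine space is irreducible (its coordinate ring $K[y_1, \ldots, y_{n^2-1}]$ is an integral domain).

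There is no real obstacle here; the only point requiring a moment of care is to verify that $\phi$ and $\psi$ are genuine morphisms of varieties (i.e., polynomial in coordinates), which is immediate from the explicit formulas. Everything else is a direct application of the preceding Lemma together with the standard correspondence between prime ideals and irreducible varieties.
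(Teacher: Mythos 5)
Your proposal is correct and follows essentially the same route as the paper: irreducibility comes from the fact that $\operatorname{tr}(X)-r$ is a degree-one (hence irreducible, hence prime) element of the polynomial ring, and the dimension comes from identifying $\mathcal{V}_r$ with an $(n^2-1)$-dimensional linear object — you do this via an explicit isomorphism with $\mathbb{A}^{n^2-1}$, while the paper translates $\mathcal{V}_r$ to the subspace $\mathcal{V}_0$ and applies its lemma on the variety dimension of a vector subspace, which is the same idea. The only point worth flagging is that identifying $K[x_{11},\dots,x_{nn}]/(f)$ with the coordinate ring of $\mathbb{V}(f)$ silently uses the Nullstellensatz (valid here since $K$ is algebraically closed), which the paper cites explicitly; your alternative of transferring irreducibility along the isomorphism with affine space sidesteps this.
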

\begin{proof}
    Note that ${\mathcal V}_r={\mathbb V}(I)$ where $I=\langle\operatorname{tr}(X)-r\rangle$. Since $\operatorname{tr}(X)-r$ is a polynomial of degree $1$ in $K[x_{11},...,x_{nn}]$, $\operatorname{tr}(X)-r$ is an irreducible element (and so a prime element as $K[x_{11},...,x_{nn}]$ is a UFD). Thus, $I$ is a prime ideal and so a radical ideal. By Hilbert's Nullstellensatz theorem({\cite[Proposition~15.32]{AbstractAlgebra}}) and {\cite[Proposition~15.17]{AbstractAlgebra}}, we know that ${\mathcal V}_r={\mathbb V}(I)$ must be an irreducible variety. Moreover, it is isomorphic to ${\mathcal V}_0$, since it is a translation of ${\mathcal V}_0$.
    By {Lemma~\ref{lem: variety dimesion of a vector space}}, we know that $\dim({\mathcal V}_0)=n^2-1$, as desired. %Since $V_r\simeq V_0$, we have $\dim(V_r)=n^2-1$ for all $r$.
\end{proof}

Since ${\mathscr I}(M_n(K))\cap {\mathcal V}_r\subsetneq {\mathcal V}_r$ for all $r$, we have
\[
\dim({\mathscr I}(M_n(K))\cap {\mathcal V}_r)\lneq \dim({\mathcal V}_r)=n^2-1,\quad n\ge 2.
\]
The following result seems to be known to experts, but we could not find explicitly in the literature.

\begin{lemma}
    \label{lem: dimension of union of varieties}
    Let $X,Y$ be two affine algebraic varieties. Then $\dim(X\cup Y)=\max\{\dim(X),\dim(Y)\}$.
\end{lemma}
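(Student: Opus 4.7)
The plan is to prove both inequalities separately, with the easy direction handled by the monotonicity lemma already established in the paper.

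For the inequality $\dim(X\cup Y)\ge \max\{\dim(X),\dim(Y)\}$, I would just invoke the previous lemma: since $X\subseteq X\cup Y$ and $Y\subseteq X\cup Y$, we immediately get $\dim(X)\le \dim(X\cup Y)$ and $\dim(Y)\le \dim(X\cup Y)$.

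For the reverse inequality $\dim(X\cup Y)\le \max\{\dim(X),\dim(Y)\}$, the strategy is to take any chain of proper irreducible subvarieties
\[
V_0\subsetneq V_1\subsetneq \cdots \subsetneq V_d \subseteq X\cup Y
\]
realizing the dimension $d=\dim(X\cup Y)$, and show that the entire chain actually lives inside $X$ or inside $Y$. The key observation is that for each irreducible $V_i$, one has the decomposition
\[
V_i=(V_i\cap X)\cup (V_i\cap Y),
\]
where both $V_i\cap X$ and $V_i\cap Y$ are subvarieties of $V_i$ (being intersections of affine varieties). By irreducibility of $V_i$, one of these two must equal $V_i$, i.e., $V_i\subseteq X$ or $V_i\subseteq Y$. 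Applying this to the top element $V_d$, we conclude $V_d\subseteq X$ or $V_d\subseteq Y$; in either case the entire chain is contained in that variety (since $V_i\subseteq V_d$ for all $i$), and therefore $d\le \dim(X)$ or $d\le \dim(Y)$. Combining the two cases gives $d\le \max\{\dim(X),\dim(Y)\}$.

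The main obstacle is purely bookkeeping: making sure that the standard ``irreducibility splits a union of two closed subsets'' step is legitimate in the framework used here. Since the paper defines dimension combinatorially via chains of irreducible subvarieties, and affine subvarieties of an affine variety are again affine varieties, the argument goes through cleanly. No deeper algebraic geometry (such as Krull dimension of coordinate rings) is needed beyond the irreducibility dichotomy.
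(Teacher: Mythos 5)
Your proposal is correct and follows essentially the same route as the paper: both arguments take a maximal chain of irreducible subvarieties, apply the irreducibility dichotomy to the top element $V_d$ via the decomposition $V_d=(V_d\cap X)\cup(V_d\cap Y)$ to place it (and hence the whole chain) inside $X$ or $Y$, and obtain the lower bound from the monotonicity of dimension under inclusion. Your write-up is in fact slightly more explicit than the paper's at the step where irreducibility forces one of the two closed pieces to equal $V_d$, but the underlying argument is identical.
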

\begin{proof}
    Suppose $\dim(X\cup Y)=d$.
    Let $X\cup Y\supseteq V_d\supsetneq V_{d-1}\supsetneq\cdots\supsetneq V_1\supsetneq V_0$ be a longest possible chain of irreducible subvarieties of $X\cup Y$.
    Then either $V_d\subseteq X$ or $V_d\subseteq Y$. Suppose this is not the case. Then $X\cap V_d\neq\varnothing$ and $Y\cap V_d\neq \varnothing$.
    Thus, $V_d=(X\cap V_d)\cup (Y\cap V_d)$ and so $V_d$ is not irreducible. Then there is a contradiction.
    Therefore, we can conclude that $d=\max\{\dim(X),\dim(Y)\}$.
\end{proof}

Combining the above two lemmas, we have the following conclusion. %we can conclude that:

\begin{proposition}
    \label{prop: variety dimension of I(M_n(K))}
%    Let $K$ be an algebraically closed field. Then $n-1\le\dim({\mathscr I}(M_n(K)))\le n^2-2$ for $n\ge 2$.
For $n \geq 2$, $n-1\le\dim({\mathscr I}(M_n(K)))\le n^2-2$.
\end{proposition}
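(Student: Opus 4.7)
Both bounds follow quickly from material already in place; I would handle them separately. For the upper bound, every $n\times n$ matrix has rank in $\{0,1,\ldots,n\}$, and every idempotent $E$ satisfies $\operatorname{tr}(E)=\operatorname{rank}(E)\cdot 1_K$, so each $E\in\mathscr{I}(M_n(K))$ lies in some $\mathcal{V}_r$ with $r\in\{0,1,\ldots,n\}$. Thus
\[
\mathscr{I}(M_n(K)) \;=\; \bigcup_{r=0}^{n}\bigl(\mathscr{I}(M_n(K))\cap\mathcal{V}_r\bigr)
\]
is a finite union, each term being a proper closed subvariety of the irreducible variety $\mathcal{V}_r$ of dimension $n^2-1$; by the inequality recorded immediately before the proposition, each piece has dimension at most $n^2-2$. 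Applying Lemma~\ref{lem: dimension of union of varieties} inductively to this finite union then gives $\dim\mathscr{I}(M_n(K))\le n^2-2$.

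For the lower bound, I would exhibit an explicit $(n-1)$-dimensional irreducible subvariety of $\mathscr{I}(M_n(K))$. Consider the family parameterized by $(t_2,\ldots,t_n)\in\mathbb{A}^{n-1}$ defined by
\[
E(t_2,\ldots,t_n)\;=\;\left(\begin{array}{c|c} 1 & \mathbf{0}^{\mathsf T} \\ \hline \mathbf{t} & O_{(n-1)\times(n-1)}\end{array}\right),\qquad \mathbf{t}=(t_2,\ldots,t_n)^{\mathsf T}.
\]
A one-line block-multiplication check shows $E(t_2,\ldots,t_n)^2=E(t_2,\ldots,t_n)$: only the first column of $E$ is nonzero, and since its $(1,1)$-entry equals $1$, left multiplication by $E$ fixes that column. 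Let $W$ denote the image of this parameterization in $M_n(K)$. The map $\mathbb{A}^{n-1}\to W$ and its inverse (reading off the $(i,1)$-entries for $2\le i\le n$) are mutually inverse morphisms of affine varieties, so $W\cong\mathbb{A}^{n-1}$. Consequently $W$ is irreducible of dimension $n-1$, and since $W\subseteq\mathscr{I}(M_n(K))$ we conclude $n-1\le\dim\mathscr{I}(M_n(K))$.

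No serious obstacle arises. The only planning decisions are noticing that the union decomposition is finite in every characteristic (since the rank can only take the $n+1$ values $0,1,\ldots,n$, regardless of what they become modulo $p$) and picking a sufficiently simple linear family of idempotents to witness the lower bound.
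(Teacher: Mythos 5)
Your proposal is correct and follows the paper's strategy: the upper bound comes from writing $\mathscr{I}(M_n(K))$ as a finite union of proper closed subvarieties of the irreducible $(n^2-1)$-dimensional hyperplanes ${\mathcal V}_r$ and invoking Lemma~\ref{lem: dimension of union of varieties}, while the lower bound comes from exhibiting an $(n-1)$-dimensional affine-linear family of rank-one idempotents inside $\mathscr{I}(M_n(K))$. Your witness $W$ (first row fixed to $(1,0,\ldots,0)$, first column varying) is a slightly cleaner choice than the paper's (fixed ratios $s_2,\ldots,s_n$ between rows, with the first row varying subject to a trace condition), since both the idempotency check and the dimension count are immediate and you avoid the paper's explicit row-reduction of the defining linear system.
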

\begin{proof}
%    By the above discussion, we know that the dimension is bounded by $n^2-2$.
    The upper bound was obtained in previous discussion. We focus on the lower bound.
    %For the lower bound: 
    Let $s_2,...,s_n\in K$ be given. By {Corollary~\ref{cor: idempotent in Mn(UFD) of rank 1}}, consider the set $W\subseteq {\mathscr I}(M_n(K))\cap {\mathcal V}_1$ which is given by
    \[
    W=\left\{\left(\begin{array}{cccc}
         x_{11}&x_{12}&\cdots&x_{1n}  \\
         s_2x_{11}&s_2x_{12}&\cdots&s_2x_{1n}  \\
         s_3x_{11}&s_3x_{12}&\cdots&s_3x_{1n}  \\
         \vdots&\vdots&\ddots&\vdots\\
         s_nx_{11}&s_nx_{12}&\cdots&s_nx_{1n}
    \end{array}\right)\ \middle|\  x_{11}+\sum_{j=2}^{n}s_jx_{1j}=1 \right\}.
    \]
    We claim that $\dim(W) = n-1$. In fact, $W$ is a translation of a subspace $W_0$ translated by one unit (That is, $W=W_0+E_{11}$ and $E_{11}$ is a matrix unit). Moreover, $W_0$ can be also viewed as the variety 
    \[
    W_0={\mathbb V}\left(\{F_{ij}\mid2\le i\le n,1\le j\le n\}\cup\left\{x_{11}+\sum_{j=2}^{n}s_jx_{1j}\right\}\right)
    \]
    and $F_{ij}=F_{ij}=x_{ij}-s_ix_{1j}$. Thus, $W$ is an affine algebraic variety and its dimension is the same as $\dim(W_0)$. In particular, $\dim(W_0)$ is the same as the dimension as a vector space, by {Lemma~\ref{lem: variety dimesion of a vector space}}.
    %{\color{blue}Moreover, $W$ is a translate of xxx, which is a vector space. We compute the dimension of xxx as vector space as follows.}
%    The affine algebraic variety dimension of $W$ is the same as the corresponding vector space dimension.
    This can be obtained by the row echelon form with the variables that are ordered by $(x_{nn},...,x_{ij},x_{i(j-1)},...,x_{i1},x_{(i-1)n},...,x_{11})$.
    The coefficient matrix is
    \[
    \left(\begin{array}{ccccc|c}
         I_n&O&\cdots&O&O&-s_nI_n  \\
         O&I_n&\cdots&O&O&-s_{n-1}I_n  \\
         \vdots&\vdots&\ddots&\vdots&\vdots&\vdots  \\
         O&O&\cdots&I_n&O&-s_3I_n\\
         O&O&\cdots&O&I_n&-s_2I_n  \\
         \hline 
         &&A&&&B
    \end{array}\right)
    \]
    where $A=(a_{ij})$ is a $1\times n(n-1)$ matrix and $B=(b_{ij})$ is an $1\times n$ matrix defined by
    \[
    a_{ij}=\left\{\begin{array}{cl}
        1 & {\textrm {if}}\ j=1+k(n+1)\ {\textrm {for}}\ 0\le k\le n-2 , \\
        0 & {\textrm{otherwise},} 
    \end{array}\right.
    \]
    and
    \[
    b_{ij}=\left\{\begin{array}{cl}
         1&{\textrm{if}}\ j=n,  \\
         0&{\textrm{otherwise}}, 
    \end{array}\right.
    \]
    respectively. Then a row echelon form of the above matrix is
    \[
    \left(\begin{array}{ccccc|c}
         I_n&O&\cdots&O&O&-s_nI_n  \\
         O&I_n&\cdots&O&O&-s_{n-1}I_n  \\
         \vdots&\vdots&\ddots&\vdots&\vdots&\vdots  \\
         O&O&\cdots&I_n&O&-s_3I_n\\
         O&O&\cdots&O&I_n&-s_2I_n  \\
         \hline 
         O&O&\cdots&O&O&S
    \end{array}\right)
    \]
    where %$S=\left(\begin{array}{ccccc}
         %s_n&s_{n-1}&\cdots&s_2&1 
    %\end{array}\right)$.
    $S = (s_n \ s_{n-1} \cdots \ s_2 \ 1)$. Hence,
    %Thus, we can conclude that %the dimension is $n^2-(n(n-1)+1)=n-1$.
    $\dim(W_0) = n^2-(n(n-1)+1)=n-1$.
%    Hence, we obtain that $n-1\le \dim({\mathscr I}(M_n(K)))\le n^2-2$.
\end{proof}

%{\color{red} Example of the construction ($n=3$): Let $s,t\in K$ be given. Consider the set ${\mathcal I}\subset {\mathscr I}(M_3(K))\cap V_1$ be given by
%\[
%{\mathcal I}=\left\{\left(\begin{array}{ccc}
%     x_{11}&x_{12}&x_{13}  \\
%     sx_{11}&sx_{12}&sx_{13}  \\
%     tx_{11}&tx_{12}&tx_{13}
%\end{array}\right)\Bigg\vert x_{11}+sx_{12}+tx_{13}=1 \right\}.
%\]
%In fact,
%\[
%{\mathcal I}={\mathbb V}(x_{21}-sx_{11},x_{22}-sx_{12},x_{23}-sx_{13},x_{31}-tx_{11},x_{32}-tx_{12},x_{33}-tx_{13},x_{11}+x_{22}+x_{33}-1)
%\]
%is an affine subspace. The corresponding linear subspace is of dimension $n^2-n(n-1)-1=n-1=2$.
%This can be obtained by the row echelon form with the variables that are ordered by $(x_{33},x_{32},x_{31},x_{23},x_{22},x_{21},x_{13},x_{12},x_{11})$.
%\[
%\left(\begin{array}{ccccccccc}
%     1&0&0&0&0&0&-t&0&0  \\
%     0&1&0&0&0&0&0&-t&0  \\
%     0&0&1&0&0&0&0&0&-t  \\
%     0&0&0&1&0&0&-s&0&0  \\
%     0&0&0&0&1&0&0&-s&0  \\
%     0&0&0&0&0&1&0&0&-s  \\
%     1&0&0&0&1&0&0&0&1  \\
%\end{array}\right)\rightsquigarrow
%\left(\begin{array}{ccccccccc}
%     1&0&0&0&0&0&-t&0&0  \\
%     0&1&0&0&0&0&0&-t&0  \\
%     0&0&1&0&0&0&0&0&-t  \\
%     0&0&0&1&0&0&-s&0&0  \\
%     0&0&0&0&1&0&0&-s&0  \\
%     0&0&0&0&0&1&0&0&-s  \\
%     0&0&0&0&0&0&t&s&1  \\
%\end{array}\right).
%\]
%}
%{\color{red}
%Because ${\mathcal I}\simeq {\mathbb A}^{n-1}$ as varieties, we can conclude that $\dim({\mathcal I})=n-1$.
%}

It is clear that ${\mathscr I}(M_1(K))$ has dimension $0$. %The above lemma tells us 
The preceding lemma implies that the dimension of ${\mathscr I}(M_2(K))$ is either $1$ or $2$. We conclude this section by determining the exact value of $\dim({\mathscr I}(M_2(K)))$.

\begin{example}
    We compute the dimension of ${\mathscr I}(M_2(K))$. 
%    The process is followed by \cite[Theorem~9.3.8]{IdealsVarietiesandAlgorithms} The uses of terminology follows \cite{IdealsVarietiesandAlgorithms}, where the equivalence of dimension can be attained by its {Definition~9.3.7} and {Theorem~9.5.6}.
    The following procedure and the terminology used are taken from \cite{IdealsVarietiesandAlgorithms}, in particular Theorem~9.3.8; the equivalence between the book's notion of dimension and ours is established by Definition~9.3.7 and Theorem~9.5.6.
    
    Let $x_{11}=a,x_{12}=b,x_{21}=c,x_{22}=d$, and
    \[
    I=\langle a^2+bc-a,ab+bd-b,ac+cd-c,bc+d^2-d\rangle\lhd K[a,b,c,d].
    \]
    Then ${\mathscr I}(M_2(K))={\mathbb V}(I)$.
    Define a grlex order by $a>b>c>d$ and find %We can use this order to find 
    a Gr\"obner basis $G$ for $I$ where 
    \[
    G=\{\underbrace{a^2+bc-a}_{f_1},\underbrace{ab+db-b}_{f_2},\underbrace{ac+cd-c}_{f_3},\underbrace{bc+d^2-d}_{f_4},\underbrace{ad^2+d^3-ad-2d^2+d}_{f_5=-cf_2+(a+d-1)f_4}\}.
    \]
    That is, $\langle G\rangle=I$ and %$\langle a^2,ab,ac,bc,ad^2\rangle=\langle\operatorname{LT}(f_1),\operatorname{LT}(f_2),\operatorname{LT}(f_3),\operatorname{LT}(f_4),\operatorname{LT}(f_5)\rangle
    $\langle \operatorname{LT}(I)\rangle=\langle\operatorname{LT}(f_i) \mid 1 \leq i \leq 5 \rangle=\langle a^2,ab,ac,bc,ad^2\rangle$ where $\operatorname{LT}(f)$ is the leading term of $f$ under the given ordering and $\operatorname{LT}(I)=\{\operatorname{LT}(f)\mid f\in I\}$.
    By \cite[Theorem~9.3.8]{IdealsVarietiesandAlgorithms}, we know that $\dim({\mathbb V}(I))=\dim({\mathbb V}(\langle\operatorname{LT}(I)\rangle))=\dim({\mathbb V}(a^2,ab,ac,bc,ad^2))$.
    Note that ${\mathbb V}(a^2,ab,ac,bc,ad^2)={\mathbb V}(a^2)\cap {\mathbb V}(ab)\cap {\mathbb V}(ac)\cap {\mathbb V}(bc)\cap {\mathbb V(ad^2)}={\mathbb V(a)}\cap {\mathbb V}(bc)={\mathbb V}(a,b)\cup {\mathbb V}(b,c)$.
    It is clear that $\dim({\mathbb V}(a,b))=\dim({\mathbb V}(b,c))$, so we only need to compute the dimension of ${\mathbb V}(a,b)$ by {Lemma~\ref{lem: dimension of union of varieties}}.
    Because 
    \[{\mathbb V}(a,b)=\left\{\left(\begin{array}{cc}
        a & b \\
        c & d
    \end{array}\right)\ \middle|\ a=0,b=0,c,d\in K\right\}
    \]
    is a vector subspace of $M_2(K)$ of dimension $2$, by {Lemma~\ref{lem: variety dimesion of a vector space}} we know that $\dim({\mathbb V}(a,b))=2$.
    Hence, we can conclude that $\dim({\mathscr I}(M_2(K)))=\dim({\mathbb V}(I))=\dim({\mathbb V}(a,b))=2$.
    \hfill $\square$
\end{example}

\section{Concluding Remark}
It may be observed that {Section~\ref{sec: Constructing Idempotent Matrices}} is not directly relevant to the partial order relation.
The reason is that we don't know whether an idempotent matrix over an integral domain is diagonalizable or not.
We restrict our attention to PID because, at least in this case, we can utilize the Smith normal form. In other words, it is more likely that an idempotent matrix is diagonalizable.
We formulate the following problem (compare to {Proposition~\ref{diagonalization of idempotent matrix}}).

\begin{problem}
    Let $R$ be a PID and let $E$ be an idempotent matrix in $M_n(R)$. Does there exist an invertible matrix $A\in M_n(R)^{\times}$ such that
    \[
    E=A\left(\begin{array}{c|c}
         I_{{\rm rank}(E)}&O  \\
         \hline
         O&O 
    \end{array}\right)A^{-1}\ ?
    \]
\end{problem}

If the answer is true, then all properties in {Section~\ref{sec: The Poset Structure of Idempotents}} have PID version.
We can subsequently use this to characterize the partial order on ${\mathscr I}(M_n(R))$.

Now, let's continue introducing the next problem.
Let $R$ be a PID.
Recall that {Theorem~\ref{thm: idempotent in Mn(PID) in alternative form}} tells us that $E$ is an idempotent in $M_n(R)$ with ${\rm rank}(E)=\ell$ if and only if
\[
E=\left(\begin{array}{c|c}
     CA&CB  \\
     \hline
     DA& DB
\end{array}\right)
\]
for some $A,C\in M_{\ell}(R)$ and matrices $B,D$ such that $AC+BD=I_{\ell}$.
We wish to generalize to the situation that $R$ is a UFD.

\begin{problem}
    Let $R$ be a UFD. Let $E$ be an idempotent in $M_n(R)$ with ${\rm rank}(E)=\ell$. Does
    \[
    E=\left(\begin{array}{c|c}
         CA&CB  \\
         \hline
         DA& DB
    \end{array}\right)
    \]
    for some $A,C\in M_{\ell}(R)$ and matrices $B,D$ such that $AC+BD=I_{\ell}$?
\end{problem}

Finally, as an affine algebraic variety ${\mathscr I}(M_n(K))\subseteq {\mathbb A}^{n^2}$ where $K$ is an algebraically closed field, the range of its dimension is given by $n-1\le \dim({\mathscr I}(M_n(K)))\le n^2-2$ in {Proposition~\ref{prop: variety dimension of I(M_n(K))}}.
%However, we still want to know the exact value.
A computer program produces the dimensions $0, 2, 4, 8, 12, 18$ for $n = 1, 2, 3, 4, 5, 6$, respectively. 

\begin{problem}
    What is the dimension of the affine algebraic variety ${\mathscr I}(M_n(K))$?
\end{problem}

\section*{Acknowledgment}
This paper is a part of the second author's master's thesis written at the Department of Mathematics, National Taiwan Normal University. The research of the second author is financially supported by the Institute of Mathematics, Academia Sinica.
The second author would like to thank Professor Liang-Chung Hsia and Professor Chia-Hsin Liu for helpful discussions and continuous encouragement throughout this research.
The first author is partially supported by NSTC grant 113-2115-M-003-010-MY3. 
The third author is partially supported by NSTC grant 114-2115-M-017-001-MY2. 

%\printbibliography

%\bibliographystyle{alphaurl} 

\bibliographystyle{alphaurl}

\begin{thebibliography}{SKKT00}

\bibitem[CLD25]{IdealsVarietiesandAlgorithms}
David A. Cox, John Little, and Donal O'Shea.
\newblock {\em Ideals, Varieties, and Algorithms (5th ed.)}.
\newblock Springer, 2025.
\newblock \href {https://doi.org/10.1007/978-3-031-91841-4} {\path{doi:10.1007/978-3-031-91841-4}}.

\bibitem[Cri18]{numofidempotent}
Geoffrey Critzer.
\newblock {Combinatorics of Vector Spaces over Finite Fields}.
\newblock {\em Emporia State Institutional Repository Collection}, 2018.
\newblock URL: \url{https://esirc.emporia.edu/handle/123456789/3595}.

\bibitem[Cri22]{CountingMatrixOverFiniteFields}
Geoffrey Critzer.
\newblock {Counting Matrices Over Finite Fields}.
\newblock {\em Department of Mathematics, University of Kansas}, 2022.
\newblock URL: \url{https://hdl.handle.net/1808/33724}.

\bibitem[DF04]{AbstractAlgebra}
David S. Dummit and Richard M. Foote.
\newblock {\em Abstract Algebra (3rd ed.)}.
\newblock Wiley, 2004.
\newblock {\path{isbn:978-0471433347}}.

\bibitem[GS07]{antidiagonal}
Vasily Golyshev and Jan Stienstra.
\newblock {Fuchsian equations of type DN}.
\newblock {\em Communications in Number Theory and Physics}, 2007.
\newblock \href {https://doi.org/10.4310/CNTP.2007.v1.n2.a3} {\path{doi:10.4310/CNTP.2007.v1.n2.a3}}.

\bibitem[Hun80]{algebra}
Thomas~William Hungerford.
\newblock {\em Algebra}.
\newblock Springer, 1980.
\newblock \href {https://doi.org/10.1007/978-1-4612-6101-8} {\path{doi:10.1007/978-1-4612-6101-8}}.

\bibitem[Isa94]{mialgebra}
Irving~Martin Isaacs.
\newblock {\em Algebra: A Graduate Course}.
\newblock American Mathematical Society, 1994.

\bibitem[Jac53]{LecturesinAbstractAlgebraLinearAlgebra}
Nathan Jacobson.
\newblock {\em Lectures in Abstract Algebra II. Linear Algebra}.
\newblock Springer, 1953.
\newblock \href {https://doi.org/10.1007/978-1-4684-7053-6} {\path{doi:10.1007/978-1-4684-7053-6}}.

\bibitem[Lee16]{AShortProofofWedderburnArtinTheoremTK}
Tsiu-Kwen Lee.
\newblock {A short proof of the Wedderburn-Artin theorem}.
\newblock {\em Communications in Algebra}, 2016.
\newblock \href {https://doi.org/10.1080/00927872.2016.1233242} {\path{doi:10.1080/00927872.2016.1233242}}.

\bibitem[MS02]{groupring}
César~Polcino Milies and Sudarshan~K. Sehgal.
\newblock {\em An Introduction to Group Rings}.
\newblock Springer, 2002.

\bibitem[Nic93]{ashortproofofwedderburnartintheorem}
William~Keith Nicholson.
\newblock {A short proof of the Wedderburn-Artin theorem}.
\newblock {\em New Zealand J. Math}, 1993.

\bibitem[Sha13]{BasicAlgebraicGeometry1}
Igor~R. Shafarevich.
\newblock {\em Basic Algebraic Geometry 1}.
\newblock Springer, 2013.
\newblock \href {https://doi.org/10.1007/978-3-642-37956-7} {\path{doi:10.1007/978-3-642-37956-7}}.

\bibitem[SKKT00]{AnInvitationToAlgebraicGeometry}
Karen~E. Smith, Lauri Kahanpää, Pekka Kekäläinen, and William Traves.
\newblock {\em An Invitation to Algebraic Geometry}.
\newblock Springer, 2000.
\newblock \href {https://doi.org/10.1007/978-1-4757-4497-2} {\path{doi:10.1007/978-1-4757-4497-2}}.

\end{thebibliography}

\Addresses

\end{document}